\newtheorem{thm}{Theorem}
\newtheorem{lem}{Lemma}
\newtheorem{cor}{Corollary}
\newtheorem{ass}{Assumption}
\theoremstyle{remark}
\newtheorem{rem}{Remark}
\theoremstyle{definition}
\newtheorem{expl}{Example}
\def\ow{o\kern-.42em\raise.82ex\hbox{
\vrule width .12em height .0ex depth .075ex \kern-0.16em \char'56}\kern-.07em}
\def\OW{O\kern-.460em\raise1.36ex\hbox{
\vrule width .13em height .0ex depth .075ex \kern-0.16em \char'56}\kern-.07em}
\title{\bf 
Semidefinite approximations of the polynomial abscissa$^0$
}
\begin{document}

\author{Roxana He{\ss}$^{1,2}$
Didier Henrion$^{1,2,3}$\\ Jean-Bernard Lasserre$^{1,2,4}$ Ti\'{\^e}n-S\ow n Ph\d{a}m$^5$}

\footnotetext[0]{A part of this work was done while the fourth author was visiting LAAS-CNRS in April 2015. He would like to thank LAAS-CNRS and J. B. Lasserre for the hospitality and support during his stay.}
\footnotetext[1]{CNRS; LAAS; 7 avenue du colonel Roche, F-31400 Toulouse; France.}
\footnotetext[2]{Universit\'e de Toulouse; LAAS; F-31400 Toulouse; France}
\footnotetext[3]{Faculty of Electrical Engineering, Czech Technical University in Prague,
Technick\'a 2, CZ-16626 Prague, Czech Republic}
\footnotetext[4]{Institut de Math\'ematiques de Toulouse, Universit\'e de
Toulouse; UPS; F-31062 Toulouse, France.}
\footnotetext[5]{Department of Mathematics, University of Dalat, 1 Phu Dong Thien Vuong, Dalat, Vietnam.}

\date{Draft of \today}

\maketitle

\begin{abstract}
Given a univariate polynomial, its abscissa is the maximum real part of its roots. The abscissa arises naturally
when controlling linear differential equations. As a function of the polynomial coefficients,
the abscissa is H\"older continuous, and not locally Lipschitz in general, which is a source of numerical
difficulties for designing and optimizing control laws.
In this paper we propose simple approximations of the abscissa given by polynomials
of fixed degree, and hence controlled complexity. Our approximations are computed by a hierarchy
of finite-dimensional convex semidefinite programming problems. When their degree tends to infinity,
the polynomial approximations converge in norm to the abcissa, either from above or from below.\\
\begin{center}
{\bf Keywords}\\[1em]
Linear systems control, non-convex non-smooth optimization, polynomial approximations,
semialgebraic optimization, semidefinite programming.
\end{center}
\end{abstract}

\section{Introduction}

Given a univariate polynomial, its abscissa is the maximum real part of its roots. When studying
linear differential equations, the abscissa of the characteristic polynomial of the equation is used as a measure
of the decay or growth rate of the solution. In linear systems control, the abscissa
function is typically parametrized by a small number of real parameters (the controller coefficients), and it should
be minimized so as to ensure a sufficiently fast decay rate of closed-loop trajectories.

As a function of
the polynomial coefficients (expressed in some basis), the abscissa is a H\"older continuous function (with exponent
equal to the reciprocal of the polynomial degree), but it is not locally Lipschitz.
As a consequence of this low regularity, numerical optimization of the polynomial abscissa is typically a challenge.

For a recent survey on the abscissa
function and its applications in systems control, see \cite{cross}. A detailed variational analysis of the abscissa was
first carried out in \cite{gausslucas_pisa}. These ideas were exploited in a systems control setup in \cite{bhlo},
using randomized techniques of non-convex non-smooth local optimization, however without rigourous
convergence guarantees.

In the space of controller parameters,
the zero sublevel set of the abscissa function of the characteristic polynomial of a linear system is the so-called
stabilizability region, and it is typically non-convex and non-smooth, see \cite{stability}
where this set is approximated with simpler sets such as balls or ellipsoids.  In \cite{innerpmi}, ellipsoidal approximations
of the stabilizability region were generalized to polynomial sublevel set approximations, obtained by replacing negativity of
the abscissa function with positive definiteness of the Hermite matrix of the characteristic polynomial.

This paper continues the research efforts of \cite{stability} and \cite{innerpmi}, in the sense that we would like to approximate
the complicated geometry of the abscissa function (and its sublevel sets) with a simpler function, namely a
low degree polynomial. The level of complexity of the approximation is the degree of the polynomial,
to be fixed in advance. Moreover, we would like the quality of the approximation to improve when the
degree increases, eventually converging (in some appropriate sense) to the original abscissa function when
the degree tends to infinity. 

The outline of the paper is as follows. After introducing in Section 2 the abscissa function and some relevant notations, we address in Section 3  the problem of finding an upper approximation of the abscissa. In Section 4, we address the more difficult problem of approximating the abscissa from below, first by using elementary symmetric functions, and second by using the Gau{\ss}-Lucas theorem, inspired by \cite{gausslucas_pisa}. Explicit numerical examples illustrate
our findings throughout the text.

\section{Preliminaries}

\subsection*{Notation and definitions}

Let $n \in \mathbb{N}$ and $\mathcal{Q} \subseteq \mathbb{R}^n$ be a compact semi-algebraic set on which a Borel measure with support $\mathcal{Q}$ can be defined and whose moments are easy to compute.  For simplicity, in this paper we choose $\mathcal{Q}=[-1,1]^n=\{q \in \mathbb{R}^n \: :\: 1-q^2_1\geq 0, \ldots, 1-q^2_n\geq 0\}$.

Let $\mathscr{C}(\mathcal{Q})$ denote the space of continuous functions on $\mathcal{Q}$. Its topological dual 
is isometrically isomorphic to the vector space
$\mathscr{M}(\mathcal{Q})$ of signed Borel measures on $\mathcal{Q}$.
By Banach-Alaoglu' s theorem \cite{ash,barvinok}, the unit ball of $\mathscr{M}(\mathcal{Q})$ 
is compact (and sequentially compact) in the weak-star topology of $\mathscr{M}(\mathcal{Q})$.

Denote by $\mathbb{R}[q]_d$ the vector space of real polynomials in the variables $q = (q_1, \dotsc, q_n)$ of degree at most $d$. Let $\Sigma[q] \subset \mathbb{R}[q]$ be the convex cone of real polynomials that are sums of squares of polynomials and denote by $\Sigma[q]_{2d}$ its subcone of sum of squares polynomials of degree at most $2d$.

\subsection*{The abscissa function}

Consider the monic non-constant polynomial $p \in \mathbb{R}[s]$ defined by
\begin{equation*}
	p:\ s \mapsto p(q,s) := \sum_{k=0}^m p_k(q)s^k
\end{equation*}
with $s \in \mathbb{C}$ complex, $q = (q_1,\dotsc,q_n) \in \mathcal{Q}$ and given polynomials $p_k \in \mathbb{R}[q]$ for $k = 0,1, \dotsc, m$ with $p_m(q) \equiv 1$ and $m > 0$.

Denote by $s_k(q),\ k = 1, \dotsc, m,$ the roots of $p(q,\cdot)$ and by $a: \mathcal{Q}  \to \mathbb{R}$ (or $a_p$ if it is necessary to clarify the dependence on the polynomial) the abscissa map of $p$, i.e. the maximal real part of the roots:
\begin{equation*}
	a(q) := \max_{k = 1, \dotsc, m} \Re(s_k(q)), \:\: q\in\mathcal{Q}.
\end{equation*}

Equivalently, with $i = \sqrt{-1}$ and $s = x + iy$ write
\begin{equation*}
	p(q,s) = p_\Re(q,x,y) + ip_\Im(q,x,y)
\end{equation*}
for two real polynomials $p_\Re, p_\Im \in \mathbb{R}[q,x,y]$ of total degree $m$. Then
\begin{equation*}
	a(q) = \max\{x \in \mathbb{R} : \exists y \in \mathbb{R}:\ p_\Re(q,x,y) = p_\Im(q,x,y) = 0\},\:\: q\in\mathcal{Q}.
\end{equation*}

We observe that function $a:\mathcal{Q}\to\mathbb{R}$ is semi-algebraic and we define the basic closed semi-algebraic set
\begin{equation*}
	\mathcal{Z} := \{(q,x,y) \in \mathbb{R}^n \times \mathbb{R}^2 :  q \in \mathcal{Q},\: p_\Re(q,x,y) = p_\Im(q,x,y) = 0\}.
\end{equation*}

\begin{rem}
Set $\mathcal{Z}$ is compact, since $\mathcal{Q}$ is compact and $p$ is monic in $s$.
\end{rem}

Now we can write the abscissa map as
\begin{equation*}
	a(q) = \max\{x \in \mathbb{R} : \exists y \in \mathbb{R}:\ (q,x,y) \in \mathcal{Z}\},\:\: q\in\mathcal{Q}.
\end{equation*}

Since $p$ is monic, its abscissa $a$ is continuous, though in general not Lipschitz continuous. For example, for $n = 1$ and $p(q,s) = s^6 + q$ the map $a(q)$ is only H\"older continuous with exponent $\frac{1}{6}$ for small $q$. To be precise, $a$ is always H\"older continuous by the {\L}ojasiewicz inequality \cite{bochnak}, since $\mathcal{Q}$ is compact.
\section{Upper abscissa approximation}\label{upper}

\subsection{Primal and dual formulation}\label{sec1}

Given a polynomial $p$ defined as above, the solution to the following linear programming (LP) problem gives an upper approximation of its abscissa function $a$ on $\mathcal{Q}$:
\begin{align}
	\rho = & \inf_{v \in \mathscr{C}(\mathcal{Q})}  \int_\mathcal{Q} v(q)\, dq \label{v}\\
	& \text{s.t.}\quad v(q) - x \geq 0 \text{ for all } (q,x,y) \in \mathcal{Z} \notag
\end{align}
with $\mathscr{C}(\mathcal{Q})$ denoting the space of continuous functions from $\mathcal{Q}$ to $\mathbb{R}$.

\begin{rem}
	Since the continuous functions defined on compact set $\mathcal{Q}$ can be approximated uniformly by polynomials by the Stone-Weierstra{\ss} theorem \cite[\S 16.4.3]{zorich}, we can replace $\mathscr{C}(\mathcal{Q})$ in problem \eqref{v} by the ring of polynomials $\mathbb{R}[q]$.
\end{rem}

The LP dual to problem \eqref{v} reads
\begin{align}
	\rho^* = & \sup_{\mu \in \mathscr{M}^+(\mathcal{Z})}  \int_\mathcal{Z} x\ d\mu(q,x,y) \label{mu}\\
	& \text{s.t.}  \int_\mathcal{Z} q^\alpha\, d\mu = \int_\mathcal{Q} q^\alpha\, dq, \ \text{ for all } \alpha \in \mathbb{N}^n, \notag
\end{align}
where $q^\alpha$ stands for the monomial $q_1^{\alpha_1} q_2^{\alpha_2} \cdots q_n^{\alpha_n}$ and $\mathscr{M}^+(\mathcal{Z})$ denotes the cone of non-negative Borel measures supported on $\mathcal{Z}$,
identified with  the set of all non-negative continuous linear functionals acting on $\mathscr{C}^+(\mathcal{Z})$, the
cone of non-negative continuous functions supported on $\mathcal{Z}$.

\begin{rem}\label{marginal}
	The constraint $\int_\mathcal{Z}  q^\alpha\, d\mu = \int_\mathcal{Q} q^\alpha\, dq$ for all $\alpha \in \mathbb{N}^n$ implies that the marginal of $\mu$ on $\mathcal{Q}$ is the Lebesgue measure on $\mathcal{Q}$, i.e. for every $g \in \mathscr{C}(\mathcal{Q})$ it holds that
	\begin{equation*}
		\int_\mathcal{Z} g(q)\, d\mu(q,x,y) = \int_\mathcal{Q} g(q)\, dq.
	\end{equation*}
	In particular this implies that $\Vert\mu\Vert={\rm vol}\:\mathcal{Q}$ where ${\rm vol}(\cdot)$ denotes the volume or Lebesgue measure.
\end{rem}

\begin{lem}\label{zero}
The supremum in LP \eqref{mu} is attained, and there is no duality gap between LP (\ref{v}) and LP (\ref{mu}), i.e. $\rho=\rho^*$.
\end{lem}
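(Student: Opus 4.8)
The plan is to treat \eqref{mu} as a generalized moment problem with compact support set $\mathcal{Z}$, and to prove the two assertions separately: first the attainment of the supremum by a weak-star compactness argument, and then the absence of a duality gap by convex (Lagrangian) duality. Throughout, the compactness of $\mathcal{Z}$ (noted in the first Remark) and the marginal identity of Remark~\ref{marginal} do the essential work.

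For \textbf{weak duality} $\rho^\ast \le \rho$ I would argue directly: if $v \in \mathscr{C}(\mathcal{Q})$ is feasible for \eqref{v} and $\mu \in \mathscr{M}^+(\mathcal{Z})$ is feasible for \eqref{mu}, then integrating the pointwise inequality $v(q) - x \ge 0$ against the non-negative measure $\mu$ and using the marginal identity of Remark~\ref{marginal} gives
\[
  \int_\mathcal{Z} x \, d\mu \;\le\; \int_\mathcal{Z} v(q)\, d\mu(q,x,y) \;=\; \int_\mathcal{Q} v(q)\, dq .
\]
Taking the supremum over $\mu$ and the infimum over $v$ yields $\rho^\ast \le \rho$.

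For \textbf{attainment} of the supremum I would show the feasible set $F$ of \eqref{mu} is non-empty and weak-star compact, while the objective $\mu \mapsto \int_\mathcal{Z} x\, d\mu$ is weak-star continuous (since $(q,x,y)\mapsto x$ is continuous and bounded on the compact set $\mathcal{Z}$); the conclusion then follows from the Weierstrass theorem. Non-emptiness comes from pushing the Lebesgue measure on $\mathcal{Q}$ forward under a measurable selection $q \mapsto (q, x(q), y(q)) \in \mathcal{Z}$ of a root of $p(q,\cdot)$, which exists because the roots depend semi-algebraically on $q$; the resulting measure clearly satisfies all moment constraints. Compactness follows from Banach--Alaoglu: the constraint with $\alpha = 0$ forces $\Vert \mu \Vert = \mu(\mathcal{Z}) = {\rm vol}\,\mathcal{Q}$, so $F$ lies in a fixed multiple of the unit ball of $\mathscr{M}(\mathcal{Z})$, and $F$ is weak-star closed as the intersection of the weak-star closed cone $\mathscr{M}^+(\mathcal{Z})$ with the hyperplanes $\{\mu : \int_\mathcal{Z} q^\alpha\, d\mu = \int_\mathcal{Q} q^\alpha\, dq\}$, each closed since $q \mapsto q^\alpha$ is continuous on $\mathcal{Z}$.

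The \textbf{main obstacle is the absence of a duality gap}, $\rho \le \rho^\ast$. Here I would use the convex-duality/perturbation framework. Writing the Lagrangian
\[
  L(v,\mu) = \int_\mathcal{Q} v\, dq - \int_\mathcal{Z} \bigl(v(q)-x\bigr)\, d\mu(q,x,y), \quad v \in \mathscr{C}(\mathcal{Q}),\ \mu \in \mathscr{M}^+(\mathcal{Z}),
\]
one checks that $\inf_v \sup_\mu L = \rho$ and $\sup_\mu \inf_v L = \rho^\ast$, the inner infimum over the free variable $v$ producing exactly the marginal constraint of \eqref{mu}. It remains to justify the interchange of $\inf$ and $\sup$. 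The clean way is to define the value function $w \mapsto \rho(w)$ obtained by perturbing the constraint of \eqref{v} to $v(q) - x + w(q,x,y) \ge 0$ with $w \in \mathscr{C}(\mathcal{Z})$; this function is convex, and the desired equality $\rho = \rho^\ast$ is equivalent to its lower semicontinuity at $w = 0$. Lower semicontinuity is where the compactness of $\mathcal{Z}$ and the weak-star compactness of $F$ established above are used, via the Banach--Alaoglu argument applied to the attained dual optimizers of the perturbed problems; alternatively one may invoke directly the strong-duality theorem for the generalized moment problem on a compact set \cite{barvinok}, whose hypotheses (a non-empty feasible measure and continuity of the data on the compact support) have just been verified. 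Either route closes the gap and, combined with the attainment proved above, establishes $\rho = \rho^\ast$ with the supremum attained.
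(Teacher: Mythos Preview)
Your proposal is correct, and the attainment argument is essentially identical to the paper's: both rely on the total-mass bound $\Vert\mu\Vert={\rm vol}\,\mathcal{Q}$, Banach--Alaoglu, and weak-star continuity of the linear objective on the compact set $\mathcal{Z}$.

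For the absence of a duality gap, however, you and the paper take different routes. The paper applies a closed-cone criterion from \cite{barvinok} (Theorem~IV.7.2): it sets up the abstract pair $(E_1,F_1)=(\mathscr{M}(\mathcal{Z}),\mathscr{C}(\mathcal{Z}))$, $(E_2,F_2)=(\mathscr{M}(\mathcal{Q}),\mathscr{C}(\mathcal{Q}))$, with $\mathbf{A}$ the marginal map $\mu\mapsto\pi_{\mathcal{Q}}\mu$, and then checks that the cone $\{(\mathbf{A}\mu,\int x\,d\mu):\mu\in\mathscr{M}^+(\mathcal{Z})\}$ is closed in $E_2\times\mathbb{R}$ by a direct sequential argument (if $\mathbf{A}\mu_n\to a$ then $\Vert\mu_n\Vert\to\Vert a\Vert$, extract a weak-star convergent subsequence, pass to the limit). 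Your primary argument instead goes through the perturbation function $w\mapsto\rho(w)$ and its lower semicontinuity at $0$; this is a legitimate and classical route, though your justification of the lsc step (``Banach--Alaoglu applied to the attained dual optimizers of the perturbed problems'') is left as a sketch and would need a couple more lines to be airtight. Your stated alternative---invoking directly the strong-duality theorem for the moment problem on a compact set from \cite{barvinok}---is precisely what the paper does, so in the end both arguments rest on the same compactness ingredients; the paper's version has the virtue of being fully written out, while your Lagrangian presentation makes the weak-duality direction and the origin of the marginal constraint more transparent.
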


\begin{proof}
	The set of feasible solutions for the dual LP \eqref{mu} is a bounded subset of $\mathscr{M}^+(\mathcal{Z})$ with $\mathcal{Z}$ compact and therefore it is weak-star compact. Since the objective function is linear, its supremum on this weak-star compact set is attained. For elementary background on weak-star topology, see e.g. \cite[Chapter IV]{barvinok}.

	To prove that there is no duality gap, we apply \cite[Theorem IV.7.2]{barvinok}. For this purpose we introduce the notation used in \cite{barvinok} in this context. There, the primal and the dual are written in the following canonical form:
	\begin{align*}
		\rho^* = &\sup_{{\bf x} \in E_1}\ \langle {\bf x},{\bf c} \rangle_1 & \rho = &\inf_{{\bf y} \in F_2}\ \langle {\bf b},{\bf y} \rangle_2\\
		& \text{s.t. } {\bf Ax} = {\bf b},\ {\bf x} \in E_1^+ & & \text{s.t. } {\bf A}^*{\bf y} - {\bf c} \in F_1^+
	\end{align*}	
	So we set $E_1 := \mathscr{M}(\mathcal{Z})$ with its cone $E_1^+ := \mathscr{M}^+(\mathcal{Z})$. Then their (pre-)duals are $F_1 := \mathscr{C}(\mathcal{Z})$ and $F_1^+ := \mathscr{C}^+(\mathcal{Z})$ respectively. Similarly, we define $E_2 := \mathscr{M}(\mathcal{Q})$ and $F_2 := \mathscr{C}(\mathcal{Q})$.\par\smallskip
	Setting ${\bf x} := \mu \in E_1$, ${\bf c} := x \in F_1$, ${\bf b} \in E_2$ the Lebesgue measure on $\mathcal{Q}$ and ${\bf y} := v \in F_2$, the linear operator ${\bf A}: E_1 \to E_2$ is given by ${\bf x} \mapsto \pi_{\mathcal{Q}}{\bf x}$ where $\pi_\mathcal{Q}$ denotes the projection onto $\mathcal{Q}$, i.e.,
	${\bf Ax}(B)={\bf x}(B\times \mathbb{R}^2)$ for all $B\in\mathcal{B}(\mathcal{Q})$.

	According to \cite[Theorem IV.7.2]{barvinok} the duality gap is zero if the cone $\{({\bf Ax}, \langle {\bf x},{\bf c} \rangle_1) : {\bf x} \in E_1^+\}$
		is closed in $E_2 \times \mathbb{R}$. This holds in our setup since ${\bf x} \mapsto {\bf Ax}$ and ${\bf x} \mapsto \langle {\bf x},{\bf c} \rangle_1$ are continuous linear maps and $E_1^+ = \mathscr{M}^+(\mathcal{Z})$ is weak-star closed due to the compactness of $\mathcal{Z}$. So if for some ${\bf a}\in E_2$, ${\bf Ax_n}\to {\bf a}$ as $n\to\infty$
		then from the definition of the mapping ${\bf A}$ and as $({\bf x_n})\subset E_1^+$, one has $\Vert{\bf x_n}\Vert\to \Vert {\bf a}\Vert$ as $n\to\infty$ (see Remark \ref{marginal}). Therefore the sequence $({\bf x_n})\subset E_1^+$ is bounded and by Banach-Alaoglu's theorem  \cite{ash,barvinok}, it contains a subsequence $({\bf x_{n_k}})\subset E^+_1$ that converges to some ${\bf x}\in E^+_1$ for the weak-star topology. By continuity of the mappings
		${\bf A}$ and ${\bf c}$, the result follows.
\end{proof}

\begin{rem}\label{attained}
	The infimum in LP \eqref{v} is not necessarily attained, since the set of feasible solutions is not compact. It is neither attained when we replace $\mathscr{C}(\mathcal{Q})$ with $\mathbb{R}[q]$, since $a$ is non-Lipschitz, so in particular not polynomial.

	However, the infimum is attained if we replace $\mathscr{C}(\mathcal{Q})$ with $\mathbb{R}[q]_d$ for $d$ finite. Then, with $M := \min_{q \in \mathcal{Q}} a(q) > -\infty$ and $\tilde{v}(q) := v(q) - M$ we can rewrite LP \eqref{v} as the equivalent problem
	\begin{equation*}
		\inf_{\tilde{v} \in \mathbb{R}[q]_d} \int_\mathcal{Q} \tilde{v}(q)dq\ \text{ s.t. } \tilde{v}(q) + M - x \geq 0 \text{ on } \mathcal{Z}.
	\end{equation*}
	Now, any feasible $\tilde{v}$ is non-negative on $\mathcal{Q}$, so $\int_\mathcal{Q} \tilde{v}(q)dq = \|\tilde{v}\|_{L^1} \geq 0$ and for every $R \in \mathbb{R}$ the set $\{\tilde{v} \in \mathbb{R}[q]_d \: :\: R \geq \int_\mathcal{Q} \tilde{v}(q)dq$ and $\tilde{v}(q) + M - x \geq 0 \text{ on } \mathcal{Z}\}$ is closed and bounded in the strong topology, thus compact. Besides, due to the continuity of $a$, there always exists an $R < \infty$ such that the mentioned set is not empty, hence the infimum is attained.
\end{rem}

\subsection{SDP hierarchy}\label{las}

Let $d_0 \in \mathbb{N}$ be sufficiently large. As presented in \cite{lasserre}, we can write a hierarchy of finite-dimensional
convex semidefinite programming (SDP) problems for LP \eqref{v} indexed by the parameter $d \in \mathbb{N},\ d \geq d_0$:
	\begin{align}
		\rho_d = & \inf_{v_d,\sigma_0,\sigma_{j},\tau_\Re,\tau_\Im} \int_\mathcal{Q} v_d(q)\, dq\notag\\
		& \text{s.t.}\ v_d(q) - x = \sigma_0(q,x,y) + \sum_{j=1}^n \sigma_{j}(q,x,y)(1-q_j^2)\label{vd}\\
		&\hspace{2,5cm} + \tau_\Re(q,x,y)p_\Re(q,x,y) + \tau_\Im(q,x,y)p_\Im(q,x,y)\notag
	\end{align}
for all $(q,x,y) \in \mathbb{R}^n \times \mathbb{R}^2$ and with $v_d \in \mathbb{R}[q]_{2d}$, $\sigma_0 \in \Sigma[q,x,y]_{2d},\ \sigma_{j} \in \Sigma[q,x,y]_{2d-2}$ for $j = 1, \dotsc, n$ and $\tau_\Re, \tau_\Im \in \mathbb{R}[q,x,y]_{2d-m}$.
\begin{rem}\label{qm}
	The quadratic module generated by the polynomials $1-q_1^2, \dotsc, 1-q_n^2,\ \pm p_\Re,\ \pm p_\Im$ is archimedean by \cite[Lemma 3.17]{laurent}, since it contains the polynomial $f(q,x,y) := \sum_{j=1}^n (1-q_j^2) - p_\Re^2 - p_\Im^2$ and the set $\{(q,x,y) \in \mathbb{R}^n \times \mathbb{R}^2 : f(q,x,y) \geq 0 \}$ is compact. By \cite[Theorem 4.1]{lasserre}, this implies that the hierarchy converges, i.e. $\lim_{d\to\infty} \rho_d = \rho$.
\end{rem}

\begin{rem}\label{strength}
Note that SDP \eqref{vd} is not equivalent to LP \eqref{v}, not even with $\mathscr{C}(\mathcal{Q})$ replaced by $\mathbb{R}[q]$ or $\mathbb{R}[q]_{2d}$ in the latter, but it is a strengthening of it, meaning $\rho_d \geq \rho$. To be more specific, SDP \eqref{vd} is a reinforcement of the following LP:
	\begin{equation*}
		 \inf_{v \in \mathbb{R}[q]_{2d}}  \int_\mathcal{Q} v(q)\, dq\ \text{ s.t. } v(q) - x > 0 \text{ for all } (q,x,y) \in \mathcal{Z}.
	\end{equation*}
where we exchanged non-negativity for a specific certificate of positivity. See \cite[Chapter 4.2]{lasserre} for details.
\end{rem}

\begin{expl}
	The infimum in SDP \eqref{vd} is not necessarily attained, e.g. consider the polynomial $p(q,s) = s^2$. Then $p_\Re(q,x,y) = x^2 - y^2$, $p_\Im(q,x,y) = 2xy$ and $\mathcal{Z} = \mathcal{Q} \times \{(0,0)\}$. Obviously, the optimal solution to LP \eqref{v} is $v \equiv 0$. For SDP \eqref{vd} we would want
		\begin{equation*}
			v(q) - x = \sigma_0(q,x,y) + \sigma_1(q,x,y)(1-q^2) + \tau_\Re(q,x,y)(x^2-y^2) + 2\tau_\Im(q,x,y)xy,
		\end{equation*}
	meaning $0 \equiv v = x + \sigma_0 + \sigma_1(1-k^2) + \tau_\Re x^2 - \tau_\Re y^2 + 2\tau_\Im xy$ with $\sigma_0, \sigma_1$ sums of squares. This is impossible, since it would require the construction of the term $-x$ which in this case is only possible as a summand of $\sigma_0$. Then however we would always also produce a constant positive term.	Practically this means that the multipliers $\sigma_0, \sigma_1, \tau_\Re, \tau_\Im$ blow up.
\end{expl}

Hence, an optimal solution might not exist, but we always have a near optimal solution. This means we should allow solutions $v_d$ with $\int_\mathcal{Q} v_d(q)\, dq \leq \rho_d + \tfrac{1}{d}$, e.g. in the above example we would search for  $v \equiv \varepsilon$ for an $\varepsilon > 0$ sufficiently small.

\begin{rem}
	The existence of an optimal solution depends on further conditions, like the ideal generated by the polynomials $1-q^2_j$, $p_\Re$ and $p_\Im$ being radical, and goes beyond the scope of this paper. The interested reader is referred to the proof of \cite[Lemma 1]{innerpmi} for further details.
\end{rem}

In the following theorem we prove that the associated sequence of solutions converges:

\begin{thm} \label{converge}
Let $v_d \in \mathbb{R}[q]_{2d}$ be a near optimal  solution for SDP \eqref{vd}, i.e.
$\int_\mathcal{Q} v_d(q)\, dq \leq \rho_d + \tfrac{1}{d}$, and consider the associated sequence $(v_d)_{d\geq d_0} \subset L^1(\mathcal{Q})$. Then $v_d$ converges to the abscissa $a$ in $L^1$ norm on $\mathcal{Q}$.
\end{thm}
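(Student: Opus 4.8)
The plan is to exploit the fact that, by construction, every polynomial $v_d$ feasible for SDP \eqref{vd} is a pointwise upper bound for the abscissa $a$ on $\mathcal{Q}$, and to combine this with the convergence $\rho_d \to \rho$ of the hierarchy guaranteed by Remark \ref{qm}. Once these two ingredients are in place, the $L^1$ estimate follows from a direct computation, with no need for a compactness or subsequence argument. As a preliminary step I would pin down the value $\rho$. The constraint in LP \eqref{v} requires $v(q) \geq x$ for all $(q,x,y) \in \mathcal{Z}$; by the definition of the abscissa as the largest admissible $x$, this is precisely the requirement $v(q) \geq a(q)$ for every $q \in \mathcal{Q}$. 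Since $a$ is continuous on the compact set $\mathcal{Q}$, the choice $v = a$ is itself feasible, so the infimum is attained and
\begin{equation*}
	\rho = \int_\mathcal{Q} a(q)\, dq.
\end{equation*}

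Next I would show that each near optimal $v_d$ still dominates $a$. Evaluating the polynomial identity \eqref{vd} at any point $(q,x,y) \in \mathcal{Z}$, the two terms $\tau_\Re p_\Re$ and $\tau_\Im p_\Im$ vanish because $p_\Re = p_\Im = 0$ on $\mathcal{Z}$, while $\sigma_0(q,x,y) \geq 0$ and $\sigma_j(q,x,y)(1-q_j^2) \geq 0$ since the $\sigma$'s are sums of squares and $1 - q_j^2 \geq 0$ on $\mathcal{Q}$. Hence $v_d(q) - x \geq 0$ for all $(q,x,y) \in \mathcal{Z}$, that is, $v_d(q) \geq a(q)$ for all $q \in \mathcal{Q}$. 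This argument uses only feasibility, not optimality, so it applies verbatim to the near optimal solutions.

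Finally, combining the two facts, the integrand $v_d - a$ is nonnegative on $\mathcal{Q}$, so
\begin{equation*}
	\|v_d - a\|_{L^1(\mathcal{Q})} = \int_\mathcal{Q} \bigl(v_d(q) - a(q)\bigr)\, dq = \int_\mathcal{Q} v_d(q)\, dq - \rho \leq \rho_d + \tfrac{1}{d} - \rho.
\end{equation*}
By Remark \ref{qm} we have $\rho_d \to \rho$ as $d \to \infty$, so the right-hand side tends to $0$, which is exactly the claimed $L^1$ convergence.

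I do not expect a genuine obstacle here: the proof is essentially a bookkeeping of inequalities. The only point that requires a little care is the identification $\rho = \int_\mathcal{Q} a\, dq$ together with the observation that the pointwise bound $v_d \geq a$ persists for merely near optimal (rather than exact) solutions, which is harmless since it is a consequence of feasibility alone. Everything else reduces to invoking the already-established convergence of the hierarchy.
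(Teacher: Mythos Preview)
Your argument is correct, and it is in fact more streamlined than the paper's own proof. The two differ in two places.

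First, to identify $\rho = \int_\mathcal{Q} a(q)\,dq$, the paper passes through the dual: it invokes Lemma~\ref{zero} to get $\rho = \rho^*$, then bounds $\rho^*$ from above by $\int_\mathcal{Q} a$ using the marginal constraint, and from below by constructing an optimal measure $\mu^*$ concentrated on the graph $(q,x_q,y_q)$ of a root realizing the abscissa. You stay entirely on the primal side: the constraint in LP~\eqref{v} is exactly $v \geq a$ on $\mathcal{Q}$, and since $a \in \mathscr{C}(\mathcal{Q})$ it is itself feasible, giving the value immediately. Your route is shorter and avoids the duality machinery altogether; the paper's route has the minor advantage of exhibiting an optimal dual measure, which is of independent interest but not needed for the theorem. (Your observation that the infimum is attained at $v=a$ is also at odds with Remark~\ref{attained}, which asserts it is ``not necessarily attained''; your argument shows that remark is at least misleading.)

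Second, for the convergence $\rho_d \to \rho$, the paper reproves this from scratch via Stone--Weierstra{\ss} and Putinar's Positivstellensatz, while you simply invoke Remark~\ref{qm}, which already records this convergence. Both are legitimate; the paper's version is more self-contained, yours avoids redundancy. The final $L^1$ estimate is identical in both proofs.
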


\begin{proof}
	Recall that $\rho^*=\rho$ according to Lemma \ref{zero}. First we show that $\rho = \int_\mathcal{Q} a(q)\, dq$.
	For every $(q,x,y) \in \mathcal{Z}$ we have $x \leq a(q)$ and since $\int_\mathcal{Z} q^\alpha d\mu = \int_\mathcal{Q} q^\alpha dq$ for all $\alpha \in \mathbb{N}^n$ which means that the marginal of $\mu$ on $\mathcal{Q}$ is the Lebesgue measure on $\mathcal{Q}$ (see Remark \ref{marginal}), it follows that for every feasible solution $\mu \in \mathscr{M}_+(\mathcal{Z})$ it holds that
	\begin{equation*}
		\int_\mathcal{Z} x\ d\mu(q,x,y) \leq \int_\mathcal{Z} a(q)\, d\mu(q,x,y) = \int_\mathcal{Q} a(q)\, dq.
	\end{equation*}	
	Hence $\rho \leq \int_\mathcal{Q} a(q)\, dq$. On the other hand, for every $q \in \mathcal{Q}$ there exists $(q,x_q,y_q) \in \mathcal{Z}$ such that $a(q) = x_q$. Let $\mu^*$ be the Borel measure concentrated on $(q,x_q,y_q)$ for all $q \in \mathcal{Q}$, i.e. for $\mathcal{A}$ in the Borel sigma algebra of $\mathcal{Z}$ it holds
	\begin{equation*}
		\mu^*(\mathcal{A}) := \boldsymbol{1}_{\mathcal{A}}(q,x_q,y_q).
			\end{equation*}	
	Then $\mu^*$ is feasible for problem \eqref{mu} with value
	\begin{equation*}
		\int_\mathcal{Z} x\ d\mu^*(q,x,y) = \int_\mathcal{Q} a(q)\, dq,
	\end{equation*}	
	which proves that $\rho \geq \int_\mathcal{Q} a(q)\, dq$, hence $\rho = \int_\mathcal{Q} a(q)\, dq$.
	
Next we show convergence in $L^1$.
	Since the abscissa $a$ is continuous on the compact set $\mathcal{Q}$, by the Stone-Weierstra{\ss} theorem \cite[\S 16.4.3]{zorich} it holds that for every $\varepsilon > 0$ there exists a polynomial $h_\varepsilon \in \mathbb{R}[q]$ such that
	\begin{equation*}
		\sup_{q \in \mathcal{Q}} |h_\varepsilon(q) - a(q)| < \frac{\varepsilon}{2}.
	\end{equation*}	
	Hence, the polynomial $v_\varepsilon := h_\varepsilon + \varepsilon$ satisfies $v_\varepsilon - a > 0$ on $\mathcal{Q}$ and we have $v_\varepsilon(q) - x > 0$ on $\mathcal{Z}$. Since the corresponding quadratic module is archimedean (see Remark \ref{qm}), by Putinar's Positivstellensatz \cite[Theorem 2.5]{lasserre} there exist $\sigma_0^\varepsilon, \sigma_{j}^\varepsilon \in \Sigma[q,x,y],\ \tau_\Re^\varepsilon, \tau_\Im^\varepsilon \in \mathbb{R}[q,x,y]$ such that for all $(q,x,y) \in \mathbb{R}^n \times \mathbb{R}^2$ we can write
	\begin{align*}
		v_\varepsilon(q) - x & =  \sigma_0^\varepsilon(q,x,y) + \sum_{j=1}^n \sigma_{j}^\varepsilon(q,x,y)(1-q_j^2)\\ 
		& \hspace{3cm} + \tau_\Re^\varepsilon(q,x,y)p_\Re(q,x,y) + \tau_\Im^\varepsilon(q,x,y)p_\Im(q,x,y).
	\end{align*}	
	Therefore, for $d \geq d_\varepsilon := \lceil \frac{\deg v_\varepsilon}{2} \rceil$ the tuple $(v_\varepsilon, \sigma_0^\varepsilon, \sigma_{j}^\varepsilon, \tau_\Re^\varepsilon, \tau_\Im^\varepsilon)$ is a feasible solution for SDP \eqref{vd} satisfying
	\begin{equation*}
		0 \leq \int_\mathcal{Q} (v_\varepsilon (q) - a(q))\, dq \leq \frac{3\varepsilon}{2} \int_\mathcal{Q} dq.
	\end{equation*}	
	Together with $\int_\mathcal{Q} a(q)\, dq = \rho \leq \rho_d$ which is due to the first part of the proof and $\rho_d$ being a strengthening of $\rho$, it follows that whenever $d \geq d_\varepsilon$ it holds that
	\begin{equation*}
		0 \leq \rho_d - \int_\mathcal{Q} a(q)\, dq \leq \int_\mathcal{Q} (v_\varepsilon (q) - a(q))\, dq \leq \frac{3\varepsilon}{2} \int_\mathcal{Q} dq.
	\end{equation*}	
	As $\varepsilon > 0$ was arbitrary, we obtain $\lim_{d \to \infty} \rho_d = \int_\mathcal{Q} a(q)\, dq$ and since $a \leq v_d$ for all $d$, this is the same as convergence in $L^1$: 
	\begin{multline*}
		0 \leq \lim _{d \to \infty} \|v_d - a\|_1 = \lim_{d \to \infty} \int_\mathcal{Q} |v_d (q) - a(q)|\, dq\\ = \lim_{d \to \infty} \int_\mathcal{Q} (v_d (q) - a(q))\, dq \leq \lim_{d \to \infty} \left(\rho_d + \frac{1}{d}\right) - \int_\mathcal{Q} a(q)\, dq = 0.
	\end{multline*}	
\end{proof}

For linear systems, a polynomial is called stable if all its roots lie in the open left part of the complex plane, i.e. if its abscissa is negative. Hence for a polynomial with parameterized coefficients, as we consider in this paper, the stability region is the set of parameters for which the abscissa is negative, in our notation
\begin{equation*}
	\{q \in \mathcal{Q} : a(q) < 0\}.
\end{equation*}
The following statement on polynomial inner approximations of the  zero sublevel set of the abscissa function follows immediately
from the $L^1$ convergence result of Theorem \ref{converge}, see also \cite{innerpmi}.

\begin{cor}\label{corconverge}
Let $v_d \in {\mathbb R}[q]_{2d}$ denote, as in Theorem \ref{converge}, a near optimal solution for SDP \eqref{vd}. Then
$\{q \in \mathcal{Q} : v_d(q) < 0\} \subset \{q \in \mathcal{Q} : a(q) < 0\}$ and $\lim_{d\to\infty} \mathrm{vol}\:\{q \in \mathcal{Q} :
v_d(q) < 0\} = \mathrm{vol}\:\{q \in \mathcal{Q} : a(q) <0 \}$.
\end{cor}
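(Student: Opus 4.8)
The plan is to combine two facts, both essentially present in the proof of Theorem~\ref{converge}. The first is the pointwise domination $a(q)\le v_d(q)$ for every $q\in\mathcal{Q}$: evaluating the SDP identity \eqref{vd} at a point $(q,x_q,y_q)\in\mathcal{Z}$ whose real coordinate achieves the maximum, $x_q=a(q)$, the sum-of-squares terms $\sigma_0,\sigma_j$ are nonnegative, the factors $1-q_j^2$ are nonnegative on $\mathcal{Q}$, and $p_\Re=p_\Im=0$ on $\mathcal{Z}$, so the right-hand side is nonnegative and $v_d(q)-a(q)\ge0$. The second is the $L^1$ convergence $\|v_d-a\|_1\to0$, which is exactly the conclusion of Theorem~\ref{converge}. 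The inclusion is then immediate: if $v_d(q)<0$ then $a(q)\le v_d(q)<0$, so $q\in\{a<0\}$. In particular this already gives the one-sided volume bound $\mathrm{vol}\{v_d<0\}\le\mathrm{vol}\{a<0\}$ for every $d$.

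For the volume limit it remains to prove the reverse inequality $\liminf_d\mathrm{vol}\{v_d<0\}\ge\mathrm{vol}\{a<0\}$. Using the inclusion I would write the deficit as $\mathrm{vol}\{a<0\}-\mathrm{vol}\{v_d<0\}=\mathrm{vol}\{a<0,\ v_d\ge0\}$. On this set $v_d\ge0>a$, hence $v_d-a\ge|a|$. Fix $\delta>0$ and split according to whether $a<-\delta$ or $-\delta\le a<0$. On $\{a<-\delta,\ v_d\ge0\}$ one has $v_d-a>\delta$, and since $v_d-a\ge0$ everywhere, Markov's inequality gives $\mathrm{vol}\{a<-\delta,\ v_d\ge0\}\le\delta^{-1}\|v_d-a\|_1\to0$ as $d\to\infty$. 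The remaining piece is bounded by $\mathrm{vol}\{-\delta\le a<0\}$, a quantity independent of $d$.

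Finally I would let $\delta\downarrow0$. The nested sets $\{-\delta\le a<0\}$ decrease, and $\bigcap_{\delta>0}\{-\delta\le a<0\}=\emptyset$ because $a(q)\ge-\delta$ for all $\delta>0$ forces $a(q)\ge0$, contradicting $a(q)<0$; since $\mathcal{Q}$ has finite volume, continuity of measure from above yields $\mathrm{vol}\{-\delta\le a<0\}\to0$. Thus, given $\varepsilon>0$, one first picks $\delta$ so that this boundary layer has volume below $\varepsilon/2$, then $d$ large enough that the Markov term is below $\varepsilon/2$, showing the deficit tends to $0$. Combined with the one-sided bound, this proves the claimed volume convergence.

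I expect the main obstacle to be the volume limit rather than the inclusion: $L^1$ convergence of $v_d$ to $a$ does not by itself control the measures of the strict sublevel sets, since mass of $v_d-a$ concentrated near the level set $\{a=0\}$ could in principle keep $v_d$ nonnegative on a set of non-negligible volume. The two-scale split is what resolves this — quantitative control away from the zero level via Markov's inequality, together with a qualitative shrinking of the boundary layer $\{-\delta\le a<0\}$ via continuity of measure. The one point deserving a word of care is measurability, but continuity of $a$ makes $\{a<0\}$ open and each $\{-\delta\le a<0\}$ Borel, so all volumes above are well defined.
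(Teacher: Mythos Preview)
Your argument is correct and follows exactly the route the paper indicates: the paper does not spell out a proof but simply states that the corollary ``follows immediately from the $L^1$ convergence result of Theorem~\ref{converge}'' together with the pointwise inequality $a\le v_d$, and your Markov/boundary-layer argument is the standard way to turn that data into convergence of the sublevel-set volumes. Your write-up in fact supplies the details the paper leaves implicit.
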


\subsection{Examples}

As stated in Corollary \ref{corconverge}, while approximating the abscissa function from above we also get an inner approximation of the stability region. The authors of \cite{innerpmi} surveyed a different approach. They described the stability region via the eigenvalues of the Hermite matrix of the polynomial and approximated it using an SDP hierarchy. In the following examples we compare the two different methods and highlight the specific advantages of
our abscissa approximation.

In this section and in the remainder of the paper,
all examples are modelled by Yalmip and solved by Mosek 7 under the Matlab environment,
unless indicated otherwise.

\begin{figure}[h]
\centerline{\includegraphics[width=0.5\textwidth]{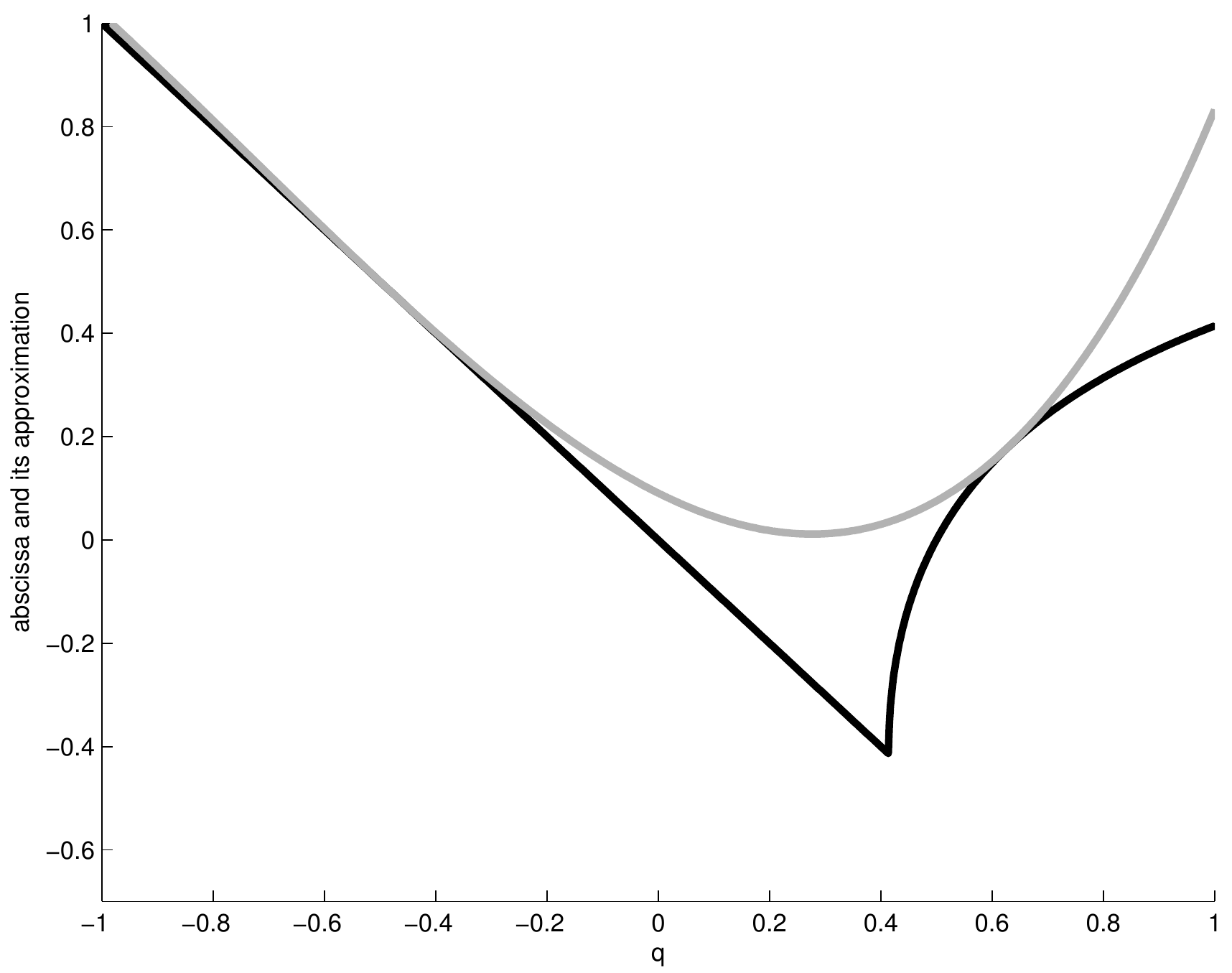}\includegraphics[width=0.5\textwidth]{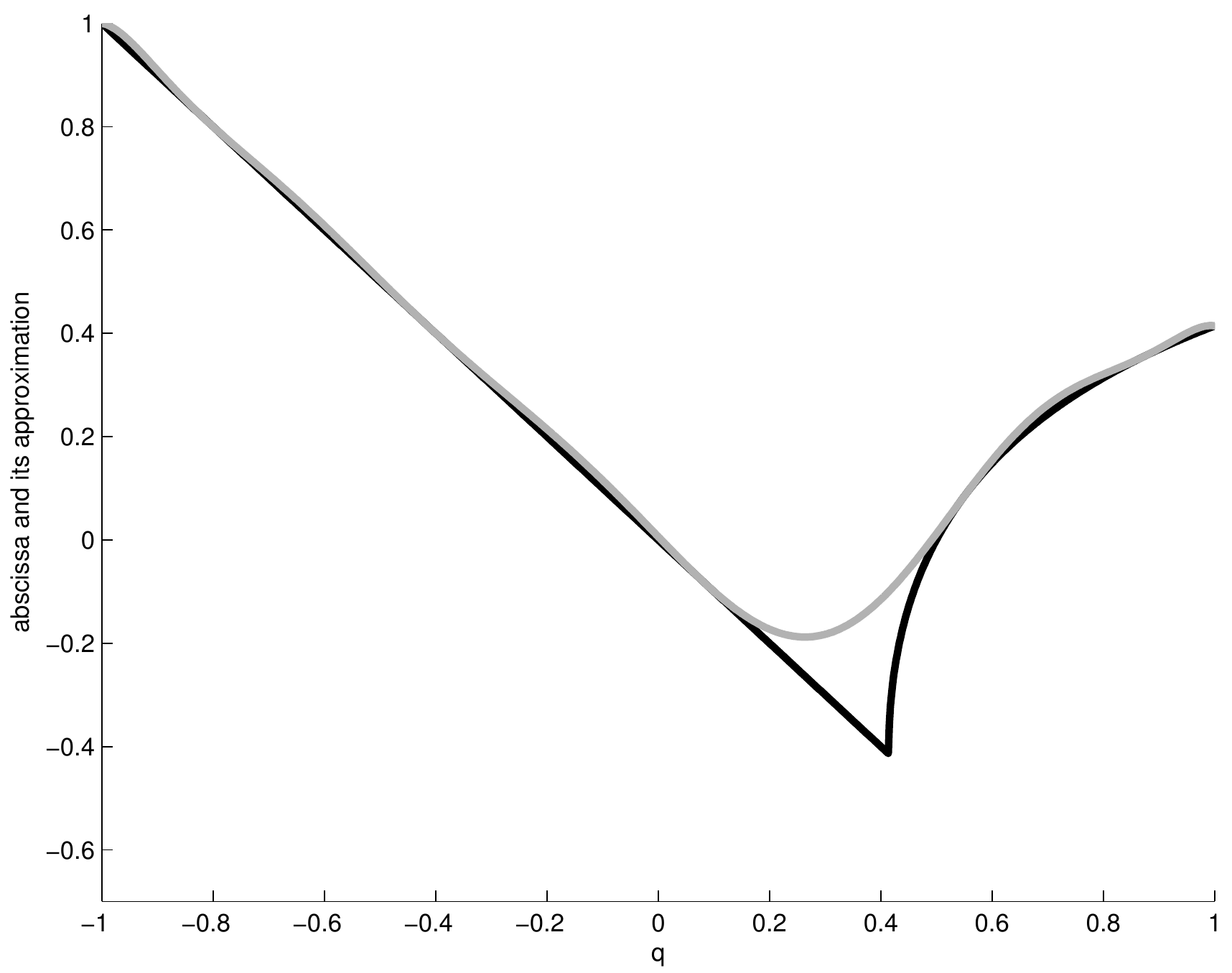}}
\caption{Abscissa (black) and its polynomial upper approximations of degree 4 (left, gray) and 10 (right, gray) for Example \ref{expl1}.
The quality of the approximation deterioriates near the minimum, where the abscissa
is not Lipschitz.}\label{fig-expl1}
\end{figure}

\begin{expl}[The damped oscillator \cite{cross}]\label{expl1} Consider the second degree polynomial depending on
$n=1$ parameter $q \in \mathcal{Q}=[-1,1]$:
\begin{equation*}
	p:\ s \mapsto p(q,s) = s^2 + 2qs + 1-2q.
\end{equation*}
Then $\mathcal{Z} = \{(q,x,y) \in [-1,1] \times \mathbb{R}^2 : x^2-y^2+2qx+1-2q = 2xy+2qy = 0\}$ and the corresponding hierarchy of SDP problems \eqref{vd} reads
	\begin{align*}
		\rho_d = & \inf_{v_d,\sigma_0,\sigma_1,\tau_\Re,\tau_\Im} \int_{-1}^{1} v_d(q)\, dq\\
		& \text{s.t.}\ v_d(q) - x = \sigma_0(q,x,y) + \sigma_1(q,x,y)(1-q^2)\\
		&\hspace{2,5cm} + \tau_\Re(q,x,y)(x^2-y^2+2qx+1-2q) + \tau_\Im(q,x,y)(2xy+2qy)
	\end{align*}
for all $(q,x,y) \in \mathbb{R}^3$ and with $v_d \in \mathbb{R}[q]_{2d}$, $\sigma_0 \in \Sigma[q,x,y]_{2d},\ \sigma_1 \in \Sigma[q,x,y]_{2d-2}$ and $\tau_\Re, \tau_\Im \in \mathbb{R}[q,x,y]_{2d-2}$. Apart from that, we only need the moments of the Lebesgue measure on $[-1,1]$ for a successful implementation. These are readily given by
\begin{equation*}
	z_\alpha = \int_{-1}^1 q^\alpha\, dq = \frac{1-(-1)^{\alpha+1}}{\alpha+1},
\end{equation*}
meaning that $\int_{-1}^{1} v_d(q)\, dq = \sum_{\alpha=1}^d {v_d}_{\alpha} z_\alpha$ with ${v_d}_{\alpha}$ denoting the coefficient of the monomial $q^\alpha$ of $v_d$. See Figure \ref{fig-expl1} for the graphs of the degree 4 and 10 polynomial upper approximations of the abscissa.

For the Hermite approximation we compute the Hermite matrix $H$ of $p$ (see \cite{stability} for details)
\begin{equation*}
	H(q) = \begin{pmatrix} 4q-8q^2 & 0\\ 0 & 4q \end{pmatrix}
\end{equation*}
and write the hierarchy of optimization problems as presented in \cite{innerpmi}:
	\begin{align*}
		&\max_{g_d,\sigma_0,\sigma_1,\tau} \int_{-1}^{1} g_d(q)\, dq\\
		&\text{s.t.}\ u^TH(q)u - g_d(q) = \sigma_0(q,u) + \sigma_1(q,u)(1-q^2) + \tau(q,u)(1-u^Tu)
	\end{align*}	
	for all $(q,u) \in [-1,1] \times \mathbb{R}^2$ and with $g_d \in \mathbb{R}[q]_{2d}$, $\sigma_0 \in \Sigma[q,u]_{2d},\ \sigma_1 \in \Sigma[q,u]_{2d-2}$ and $\tau \in \mathbb{R}[q,u]_{2d-2}$. Already for $d=6$ we observe a close match between the
genuine stability region, which is $\{q \in [-1,1] : a(q) < 0\} = (0,\tfrac{1}{2})$, the Hermite inner approximation 
$\{q \in [-1,1] : -g_6(q) < 0\}$, and the polynomial upper approximation $\{q \in [-1,1] : v_{10}(d) < 0\}$.
These three intervals are visually indistinguishable, so we do not represent them graphically.
\end{expl}

\begin{figure}[h]
\centerline{\includegraphics[width=0.5\textwidth]{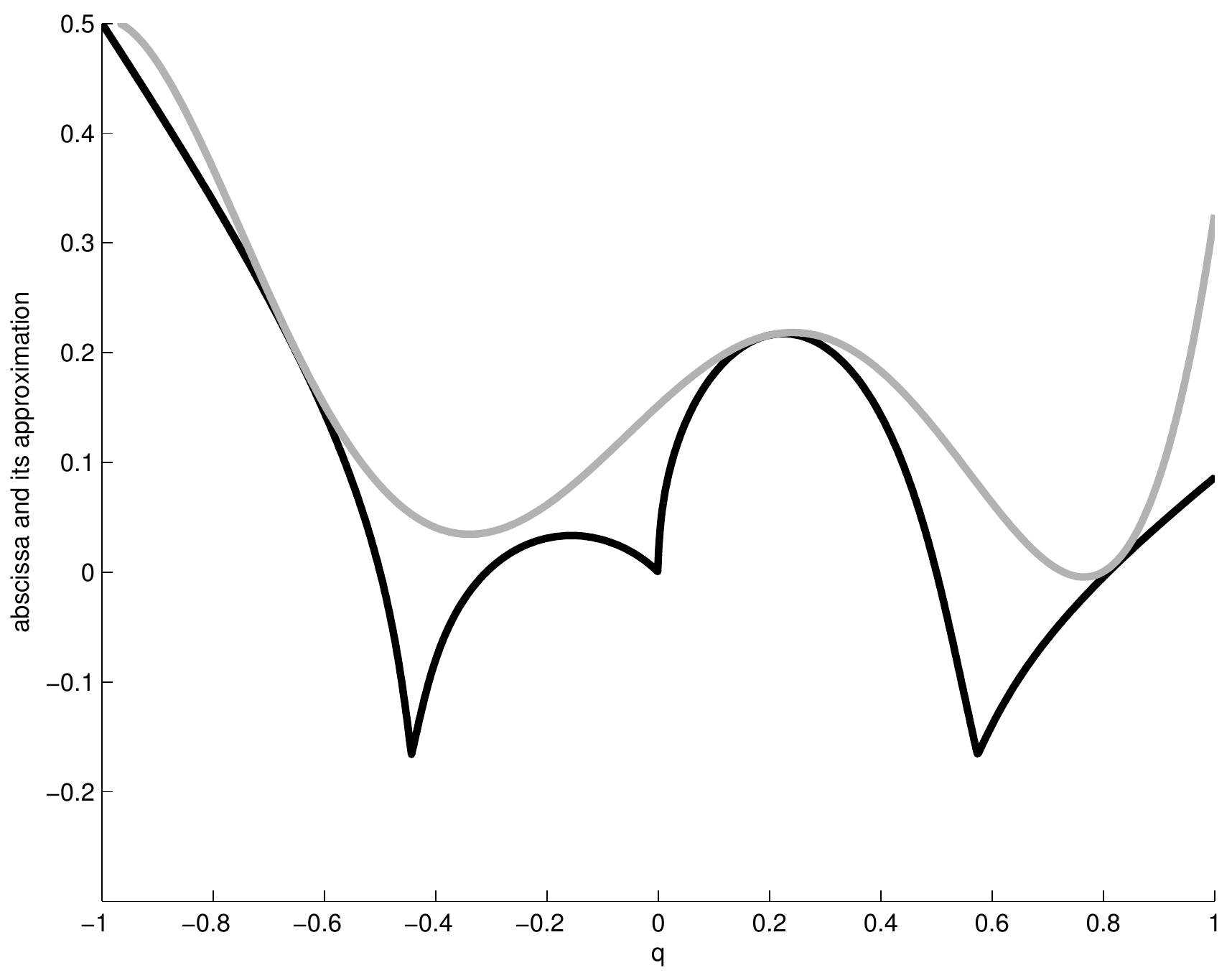}\includegraphics[width=0.5\textwidth]{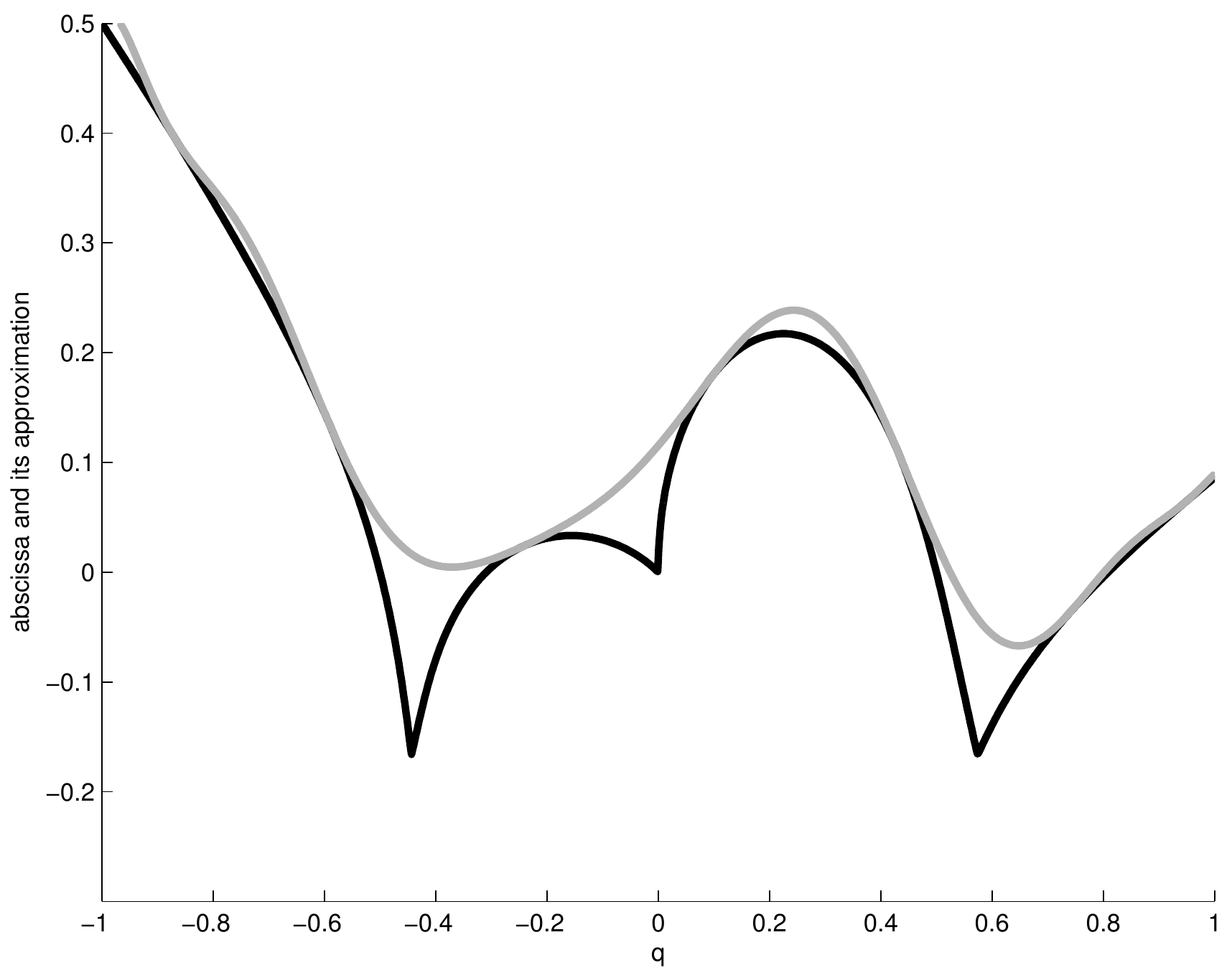}}
\caption{Abscissa (black) and its polynomial upper approximations of degree 6 (gray, left) and 12 (gray, right)
for Example \ref{expl2}.
The quality of the approximation deterioriates near the points of non-differentiability of the abscissa.}\label{fig-expl2}
\end{figure}

\begin{expl}\label{expl2}
Consider the polynomial
	\begin{equation*}
	p:\ s \mapsto p(q,s) = s^3 + \tfrac{1}{2}s^2 + q^2s + (q-\tfrac{1}{2})q(q+\tfrac{1}{2})
\end{equation*}
for $q \in \mathcal{Q} = [-1,1]$.
The abscissa function $a(q)$ of $p$ is not differentiable at three points and therefore it is rather hard to approximate in their
neighborhoods. In Figure \ref{fig-expl2} we see the abscissa and its polynomial upper approximations of degrees 6 and 12.
Comparing the genuine stability region
$\{q \in [-1,1] : a(q) < 0\}$, the polynomial inner approximation $\{q \in [-1,1] : v_{12}(q) < 0\}$ and the Hermite inner approximation
$\{q \in [-1,1] : -g_{10}(q)< 0\}$, we observe, maybe surprisingly, that the approximations are very similar and miss the same parts of the stability region. These are not reproduced graphically.
\end{expl}

\begin{rem}
Evidently, the approach via the Hermite matrix does not tell us anything about the abscissa function itself besides from where it is negative. As an illustration consider a polynomial of the form $p(q,s) = s^2 + p_0(q)$ for $n=1$. Then $p(q,\cdot)$ has either $0$ as a multiple root, two real roots (of which one is positive) or only imaginary roots, i.e. the stability region of $p$ is empty and its Hermite matrix $H(q)$ is zero. Therefore the eigenvalues and their approximation $g_d$ are also zero for every $d$. In contrast, the upper abscissa approximation $v_d$ gives a suitable approximation for the abscissa function.

On the other hand, practical experiments (not reported here) reveal that computing the abscissa approximation is typically more challenging numerically than computing the Hermite approximation. For instance, computing the upper abscissa approximation may fail for polynomials with large coefficients, while the Hermite approximation keeps providing a proper inner approximation of the stability region.
\end{rem}

\begin{figure}[h]
\centerline{\includegraphics[width=0.5\textwidth]{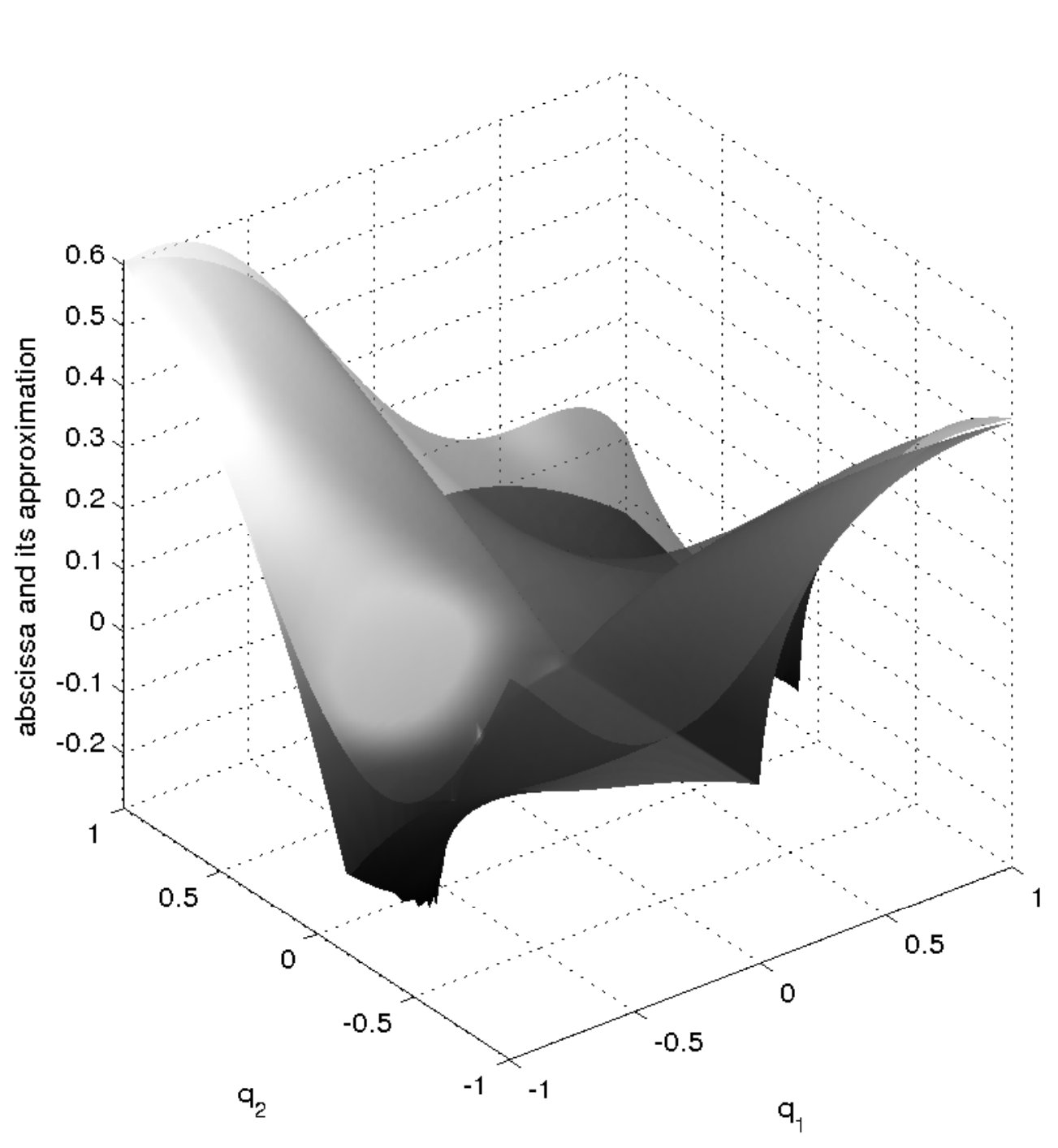}\includegraphics[width=0.5\textwidth]{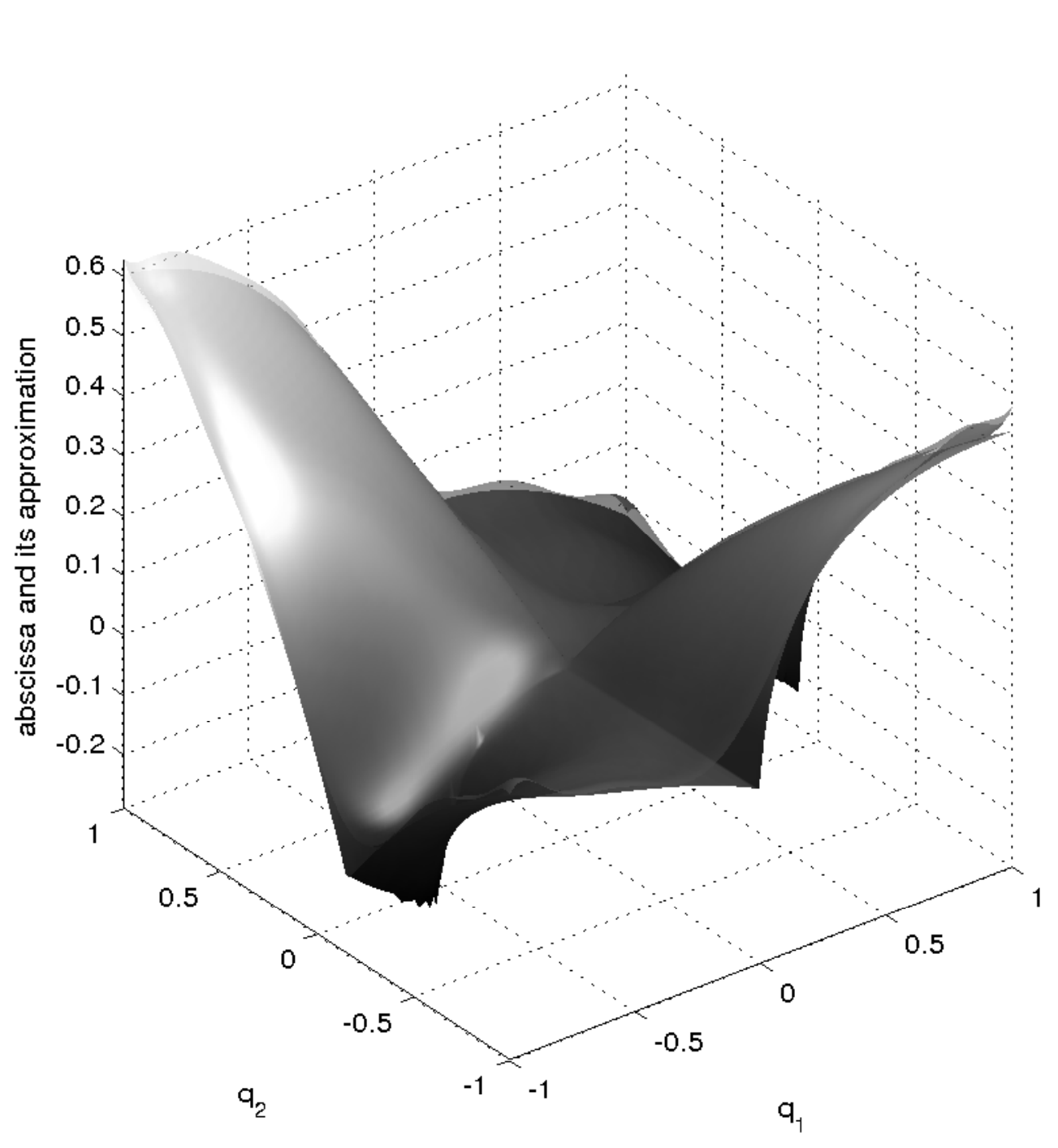}}
\caption{Abscissa (dark, below) and its polynomial upper approximations of degrees 6 (left, transparent) and
10 (right, transparent) for Example \ref{expl3}.
We observe that the approximation deteriorates near the regions of non-differentiability of the abscissa.}\label{fig-expl3}
\end{figure}

\begin{figure}[h]
\centerline{\includegraphics[width=0.5\textwidth,height=0.3\textheight]{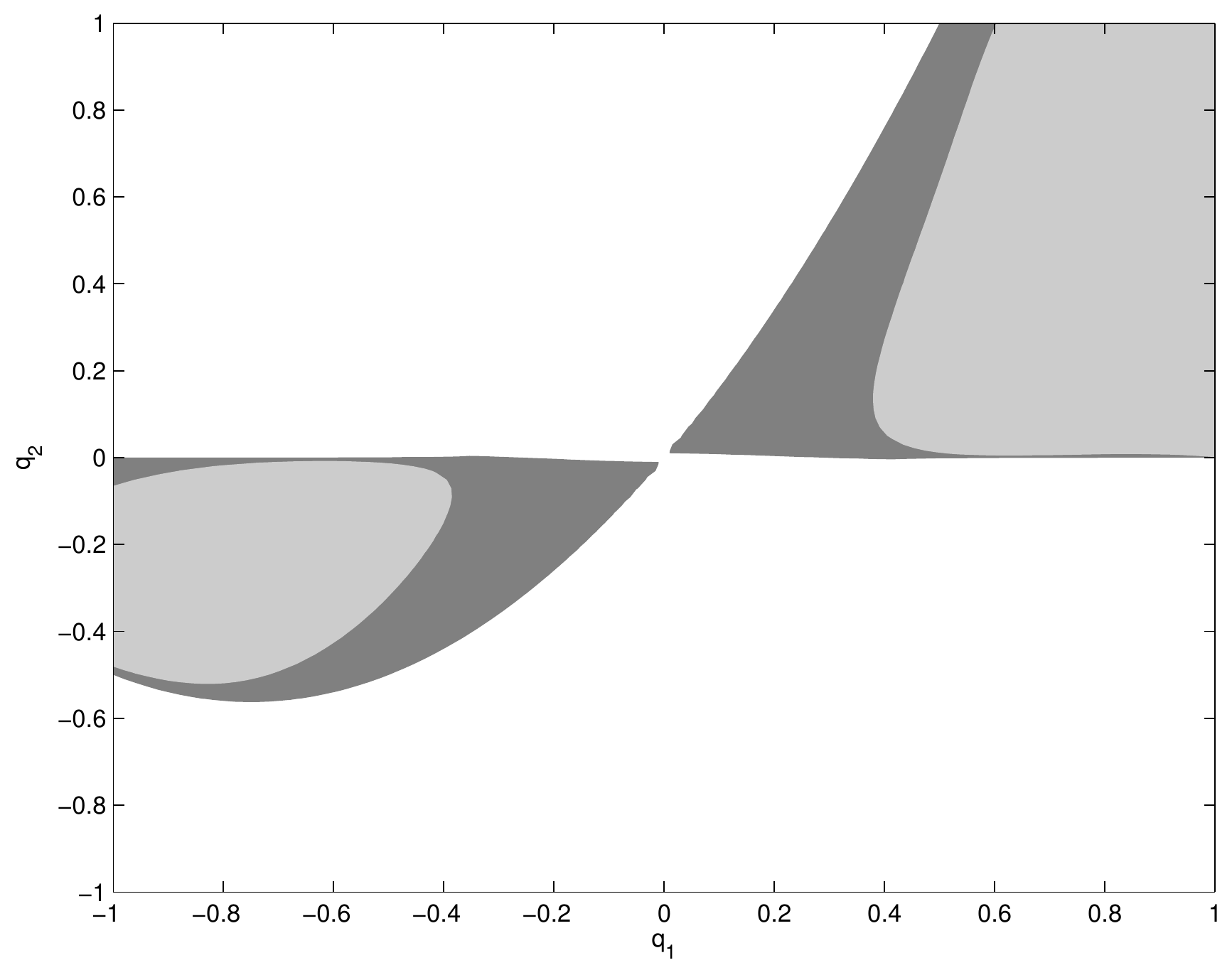}
\includegraphics[width=0.5\textwidth,height=0.3\textheight]{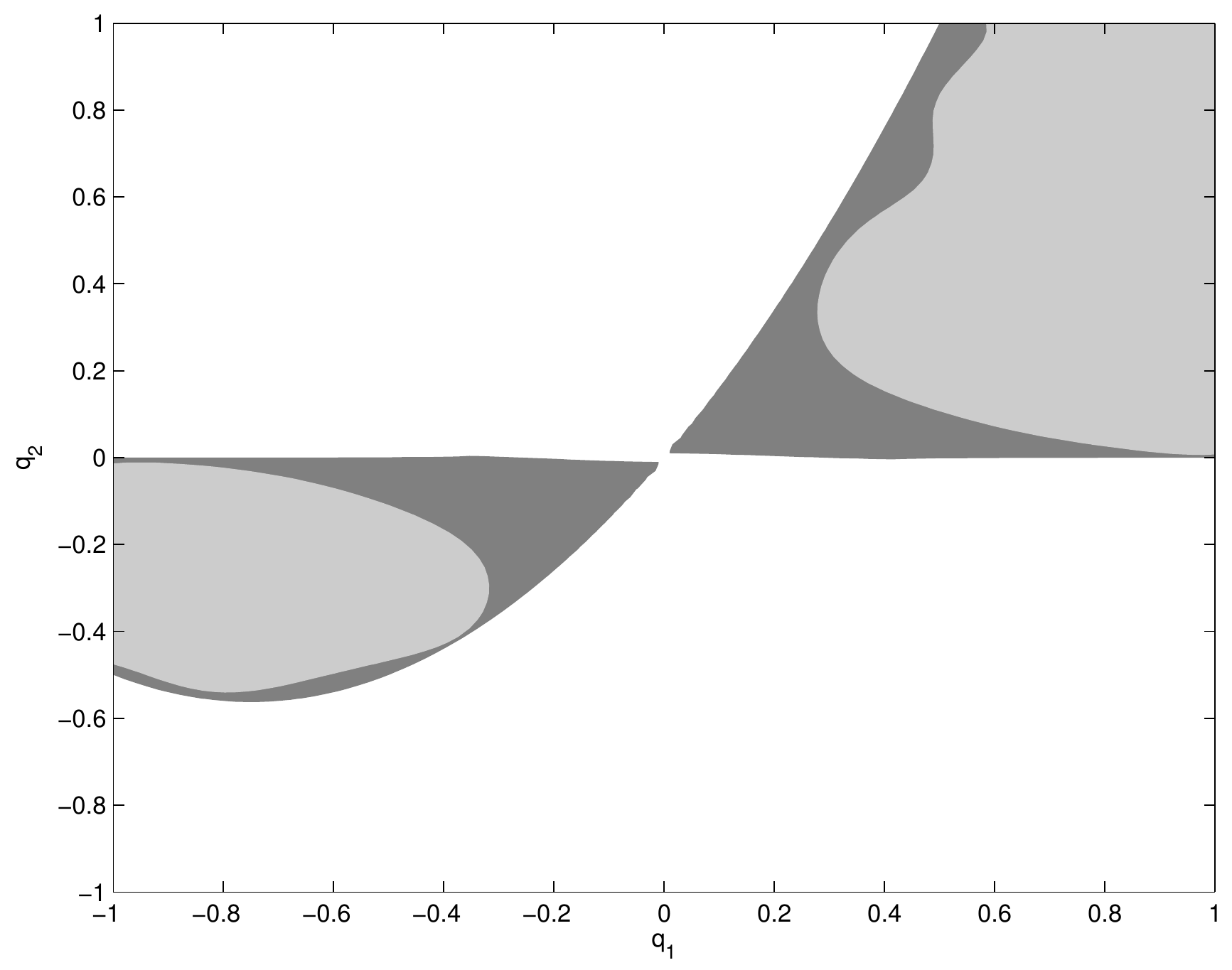}}
\caption{Stabilizability region (dark gray region) and its inner approximations with degree 8 Hermite (light gray region, left) and degree 10 upper polynomial approximation (light gray region, right).}\label{fig-expl3bis}
\end{figure}

\begin{expl}\label{expl3}
Consider the polynomial
	\begin{equation*}
		p:\ s \mapsto p(q,s) = s^3 + (q_1+\tfrac{3}{2}) s^2 + q_1^2s + q_1q_2
	\end{equation*}
depending on $n=2$ parameters $q \in \mathcal{Q}=[-1,1]^2$.
	Then $\mathcal{Z} = \{(q,x,y) \in [-1,1]^2 \times \mathbb{R}^2 : x^3-3xy^2+(q_1+\tfrac{3}{2})x^2-(q_1+\tfrac{3}{2})y^2+q_1^2x+q_1q_2 = -y^3+3x^2y+2(q_1+\tfrac{3}{2})xy+q_1^2y = 0\}$.
	In Figure \ref{fig-expl3} we represent the graphs of the abscissa $a$ and its polynomial approximations $v_6$ and $v_{10}$.
In Figure \ref{fig-expl3bis} we represent the stabilizability region, i.e. the zero sublevel set of the abscissa
$\{q \in [-1,1]^2 : a(q) < 0\}$ (dark gray region),
the degree 8 Hermite sublevel set $\{q \in [-1,1]^2 : -g_8(q) < 0\}$ (light gray region, left) and
the degree 10 polynomial sublevel set $\{q \in [-1,1]^2 : v_{10}(q) < 0\}$ (light gray region, right).
\end{expl}

\begin{rem}
	In the examples we always chose lower degrees for the Hermite approximation than for the upper abscissa approximation. The Hermite approximation converges relatively fast making it unnecessary to consider higher degrees, especially since they require much more time.
On the contrary, the upper abscissa approximation usually needs higher degrees to provide a useful approximation, but it is faster to compute.
\end{rem}

\section{Lower abscissa approximation}

At first thought, finding a lower approximation for the abscissa map might sound like a straightforward task, since one is tempted to just solve the analogue of LP \eqref{v}:
\begin{align}
&\sup_{w \in \mathscr{C}(\mathcal{Q})}  \int_\mathcal{Q} w(q)\, dq \label{vlower}\\
&\text{s.t.}\quad x - w(q) \geq 0 \text{ for all } (q,x,y) \in \mathcal{Z}. \notag
\end{align}

This, indeed, gives a valid lower bound on the abscissa function, however in general a very bad one since it is not approximating the abscissa but the minimal real part of the roots of $p$. To understand the reason we recall that
\begin{equation*}
	\mathcal{Z} = \{(q,x,y) \in \mathbb{R}^n \times \mathbb{R}^2 : q \in \mathcal{Q},\: p_\Re(q,x,y) = p_\Im(q,x,y) = 0\}
\end{equation*}
and therefore this set contains all roots of $p$ and not only those with maximal real part. 

\begin{expl}\label{expl-lower}
On the left of Figure \ref{fig-lower} we show the degree 12 solution to the SDP hierarchy corresponding
to LP \eqref{vlower} for the polynomial $p(q,s) = s^2 + 2qs + 1-2q$ of Example \ref{expl1},
which gives a tight lower approximation to the abscissa only in the left part of the domain, 
corresponding to a pair of complex conjugate roots. We observe that the SDP solver Mosek does not return
a correct answer for this particular problem, and we had to use the SDP solver SeDuMi instead in this case.
On the right of Figure \ref{fig-lower} we show 
 the degree 12 solution to the SDP hierarchy corresponding to LP \eqref{vlower} for
the polynomial $p(q,s) = s^3 + \tfrac{1}{2}s^2 + q^2s + (q-\tfrac{1}{2})q(q+\tfrac{1}{2})$ of Example \ref{expl2}.
The lower approximation is nowhere tight, due to the presence of roots with real parts smaller than the abscissa.
\end{expl}

\begin{figure}[h]
\centerline{\includegraphics[width=0.5\textwidth]{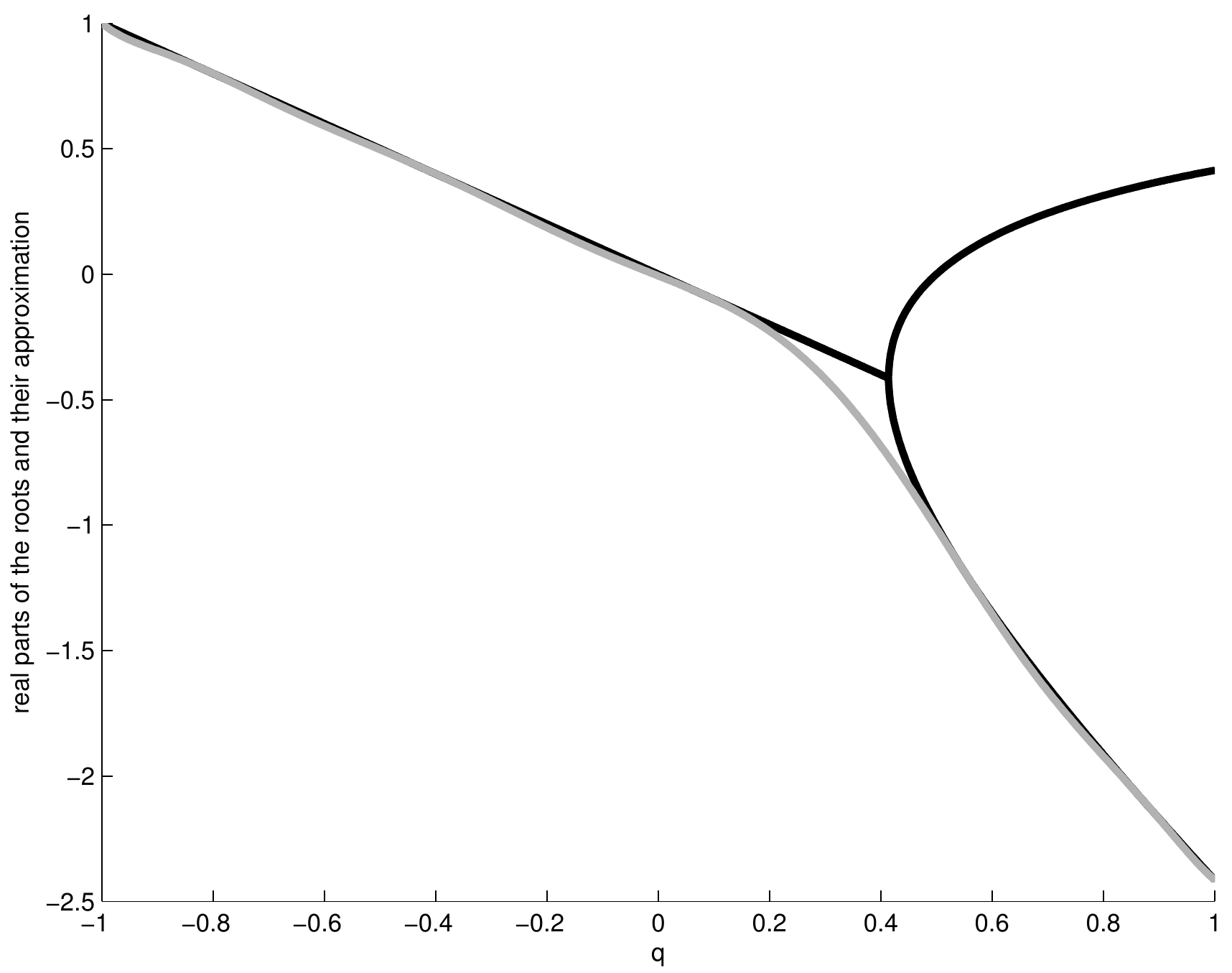}
\includegraphics[width=0.5\textwidth]{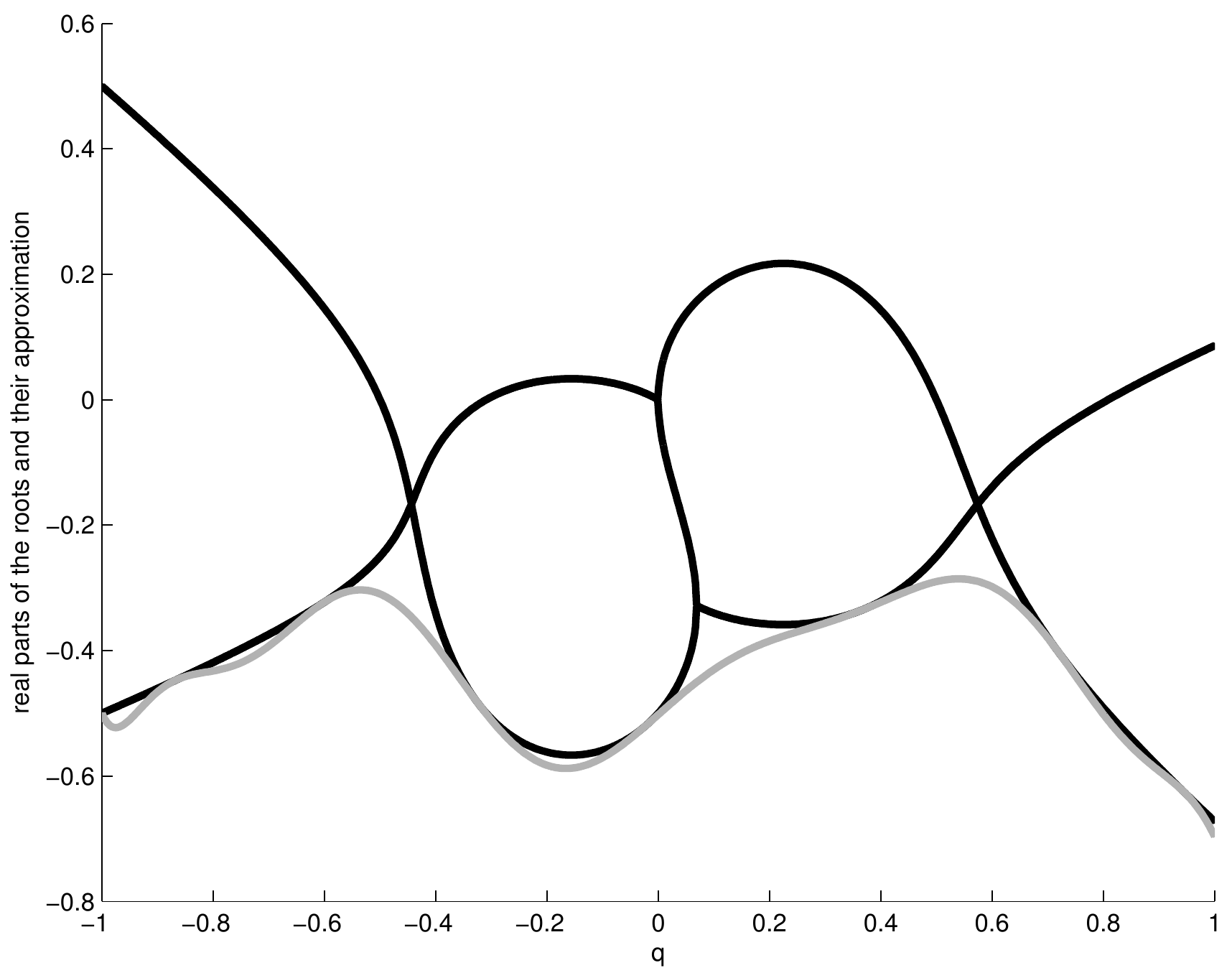}}
\caption{Real parts of the roots (black) and degree 12 polynomial lower approximations (gray) for the
second degree polynomial (left) and third degree polynomial (right) of Example \ref{expl-lower}.}\label{fig-lower}
\end{figure}

To find a tighter approximation for the abscissa map from below we pursue two different approaches:
\begin{itemize}
\item First, we reformulate the set $\mathcal{Z}$ with the help of elementary symmetric functions, in order to have access to the roots directly. This is a very neat way with options for variation, such as approximating the second largest real part of the roots from above or below, but it also includes many additional variables and it is therefore not very efficient when implemented. However, it can be useful for small problems.
\item Second, we restrict LP \eqref{vlower} further using the Gau{\ss}-Lucas theorem, i.e. instead of $\mathcal{Z}$ we use a subset of $\mathcal{Z}$ which contains only the roots with the abscissa as its real parts. This approach is much more complicated, relies on assumptions and one needs to solve two optimization problems in order to get the lower approximation. Nevertheless, the implementation is much faster and it can also be used for bigger problems.
\end{itemize}

\subsection{Lower approximation via elementary symmetric functions}\label{esf}

\subsubsection{Problem formulation}

Let us derive another description of the set of roots of $p$ which allows us to pick single roots according to the size of their real part. For this purpose let us recall the definition of our polynomial:
\begin{equation*}
	p:\ s \mapsto p(q,s) := \sum_{k=0}^m p_k(q)s^k\quad \text{with} \quad p_m(q) \equiv 1.
\end{equation*}
Following the notation of the previous chapters, we denote the roots of $p(q,\cdot)$ by $s_k(q),\ k=1,\dotsc,m$ and split them up into their real and imaginary parts, $s_k(q) = x_k(q) + iy_k(q)$ with $x_k(q), y_k(q) \in \mathbb{R}$. To simplify notations we omit the dependence on $q$ whenever it is clear and write only $s_k,\ x_k$ and $y_k$.

Now we write the coefficients of the polynomial as elementary symmetric functions of its roots:
\begin{equation*}
 p_{m-k}(q) =(-1)^k \sum_{1 \leq l_1<l_2<\cdots<l_k \leq m} s_{l_1}s_{l_2} \cdots s_{l_k},\quad k = 1,\dotsc\,m.
\end{equation*}
This allows us to define the set of roots of $p$ in the following way, where we can order the roots arbitrarily:
\begin{multline*}
	\mathcal{Z}'_o := \{(q,x_1,\dotsc,x_m,y_1,\dotsc,y_m) \in \mathcal{Q} \times \mathbb{R}^m \times \mathbb{R}^m  : x_k \leq x_m,\: k=1,\ldots,m-1,\\ 
	p_{m-k}(q) = (-1)^k \sum_{1 \leq l_1<l_2<\cdots<l_k \leq m} s_{l_1}s_{l_2} \cdots s_{l_k},\quad k = 1,\dotsc\,m\}.
\end{multline*}
To avoid complex variables $s_{l_k}$ in the description of the set, we replace them by $s_{l_k} = x_{l_k} + i y_{l_k}$ and split up the sum $\sum_{1 \leq l_1<\cdots<l_k \leq m} s_{l_1}s_{l_2} \cdots s_{l_k}$ in its real and imaginary parts. The latter would be zero, since all $p_{m-k}(q)$ are real. In the sequel we omit this procedure, since it would only complicate the notations.

For illustrative reasons let us fix $q$ for a moment. Then the set $\mathcal{Z}'_o$ contains only one element $(q,x_1,\dotsc,x_m,y_1,\dotsc,y_m)$. For this it holds that $x_m = a(q)$ and the points $(q,x_k,y_k),\ k=1,\dotsc,m$, are exactly the elements of $\mathcal{Z}$.

\begin{rem}
	One could order the roots further by adding more conditions, like for example $x_k \leq x_{m-1},\ k=1,\cdots,m-2$. Then one could also access the root with the second largest real part. Of course, this would imply another $m-2$ constraints in an implementation and therefore this would slow down further the solving process.
\end{rem}

In theory, $m$ variables suffice to characterize the roots of a real polynomial via the elementary symmetric functions, but since we need all variables $x_k$ explicitly in order to identify the maximal one, we can only eliminate $\left\lfloor \tfrac{m}{2} \right\rfloor := \max \{r \in \mathbb{Z} \mid r \leq \tfrac{m}{2}\}$ variables. We set
\begin{equation*}
	y_{k-1} = -y_k,\ \begin{cases}
		k = 2,\dotsc,m &\text{if $m$ is even}\\ k = 2,\dotsc,m-3\ \text{and } y_{m-2} = -y_{m-1}-y_m &\text{if $m$ is odd,}
	\end{cases}
\end{equation*}
meaning we decide which roots will be pairs in case they are complex. Note that it is necessary to keep $y_m$, since we defined $x_m$ as the abscissa and we do not know whether $s_m$ is real or not. In fact $s_m(q)$ can be real for some $q$ and complex for others.

\begin{rem}
	Even though we know for $m$ odd that one root must be real, we cannot eliminate $\left\lceil \tfrac{m}{2} \right\rceil$ variables, since it might happen that $s_m(q)$ is the single real root for some $q$ while it is complex for other $q$.
\end{rem}

Now we can write the set of roots with less variables and less constraints. As above we keep the variables $s_k$ in the description of the set for readability reasons, but remark that with the reduced amount of $y$ variables the constraints $0 = \Im(\sum_{1 \leq l_1<\cdots<l_k \leq m} s_{l_1}s_{l_2} \cdots s_{l_k})$ for $k = 1,\dotsc,\left\lfloor \tfrac{m}{2} \right\rfloor$ are superfluous. We have
\begin{align*}
	\mathcal{Z}_o := &\{(q,x_1,\dotsc,x_m,y_2,y_4,\dotsc,y_{2\left\lfloor \tfrac{m}{2} \right\rfloor},y_m) \in \mathcal{Q} \times \mathbb{R}^m \times \mathbb{R}^{\left\lceil \tfrac{m}{2} \right\rceil} : \\
          &\hspace{2cm}x_k \leq x_m,\: k=1,\ldots,m-1,\\
	&\hspace{2cm}p_{m-k}(q) = (-1)^k \sum_{1 \leq l_1<l_2<\cdots<l_k \leq m} s_{l_1}s_{l_2} \cdots s_{l_k},\: k = 1,\dotsc\,m\}.
\end{align*}

\begin{expl}\label{explm3}
	For $m=3$ the set $\mathcal{Z}_o$ is given by
	\begin{align*}
		\mathcal{Z}_o = &\{(q,x_1,x_2,x_3,y_2,y_3) \in \mathcal{Q} \times \mathbb{R}^3 \times \mathbb{R}^2 : x_1 \leq x_3,\ x_2 \leq x_3,\\
		&\hspace{2cm} -p_2(q) =  x_1 + x_2 + x_3,\\
		&\hspace{2cm} p_1(q) = x_1x_2+x_1x_3+x_2x_3 + y_2^2+y_2y_3+y_3^2,\\ 
		&\hspace{2cm} -p_0(q) = x_1x_2x_3 + (-x_1+x_2+x_3)y_2y_3 + x_2y_3^2 + x_3y_2^2,\\
		&\hspace{2cm} 0 = (x_1-x_2)y_2 + (x_1-x_3)y_3,\\
		&\hspace{2cm} 0 = (x_1-x_2)x_3y_2 + (x_1-x_3)x_2y_3 + y_2^2y_3+y_2y_3^2\}.
	\end{align*}
	To clarify the formula also for $m$ even, we write $\mathcal{Z}_o$ down explicitly for $m=4$:
	\begin{align*}
		\mathcal{Z}_o = &\{(q,x_1,x_2,x_3,x_4,y_2,y_4) \in \mathcal{Q} \times \mathbb{R}^4 \times \mathbb{R}^2 : x_1 \leq x_4,\ x_2 \leq x_4,\ x_3 \leq x_4,\\
		&\hspace{1cm} -p_3(q) = x_1 + x_2 + x_3 + x_4,\\
		&\hspace{1cm} p_2(q) = x_1x_2+x_1x_3+x_1x_4+x_2x_3+x_2x_4+x_3x_4 + y_2^2+y_4^2,\\ 
		&\hspace{1cm} -p_1(q) = x_1x_2(x_3+x_4)+(x_1+x_2)x_3x_4 + (x_1+x_2)y_4^2 + (x_3+x_4)y_2^2,\\
		&\hspace{1cm} p_0(q) = x_1x_2x_3x_4+ (x_1-x_2)(x_4-x_3)y_2y_4 + x_1x_2y_4^2+x_3x_4y_2^2 + y_2^2y_4^2,\\
		&\hspace{1cm} 0 = (x_1-x_2)(x_3+x_4)y_2 + (x_1+x_2)(x_3-x_4)y_4,\\
		&\hspace{1cm} 0 = (x_1-x_2)(x_3x_4+y_4^2)y_2 + (x_3-x_4)(x_1x_2+y_2^2)y_4\}.
	\end{align*}	
	Here we have set $y_1 = -y_2$ and $y_3 = -y_4$, so the constraint $0 = \Im(\sum_{1 \leq l_1<\cdots<l_k \leq m} s_{l_1}s_{l_2} \cdots s_{l_k})$ for $k=1$ is obviously superfluous, because it reduces to $0=0$. The second superfluous constraint is the one for $k=2$, that is $0 = (x_1-x_2)y_2 + (x_3-x_4)y_4$, since we have $x_1=x_2$, respectively $x_3=x_4$, in the case $s_2$, respectively $s_4$, is complex.
\end{expl}

Finally, we can reformulate  LP \eqref{vlower} in such a way that it provides a proper approximation of the abscissa function from below:
\begin{align}
	\rho= & \sup_{w \in \mathscr{C}(\mathcal{Q})}  \int_\mathcal{Q} w(q)\, dq \label{tau}\\
	& \text{s.t.}\quad x_m - w(q) \geq 0 \text{ for all } (q,x_1,\dotsc,x_m,y_2,y_4,\dotsc,y_{2\left\lfloor \tfrac{m}{2} \right\rfloor},y_m) \in \mathcal{Z}_o. \notag
\end{align}
With the notation of Section \ref{sec1} its dual LP reads
\begin{align}
	\rho^* = & \inf_{\mu \in \mathscr{M}^+(\mathcal{Z}_o)}  \int_{\mathcal{Z}_o} x_m\ d\mu(q,x_1,\dotsc,x_m,y_2,y_4,\dotsc,y_{2\left\lfloor \tfrac{m}{2} \right\rfloor},y_m) \label{mum}\\
	& \text{s.t.}  \int_{\mathcal{Z}_o} q^\alpha\, d\mu = \int_\mathcal{Q} q^\alpha\, dq, \text{ for all } \alpha \in \mathbb{N}^n. \notag
\end{align}
In analogy with the upper approximation we have no duality gap and the infimum is attained:
\begin{lem}\label{zerod}
The infimum in LP (\ref{mum}) is attained, and there is no duality gap between LP (\ref{tau}) and LP (\ref{mum}), i.e.
$\rho=\rho^*$.
\end{lem}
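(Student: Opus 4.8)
The plan is to transcribe the proof of Lemma \ref{zero} almost verbatim, replacing the set $\mathcal{Z}$ by $\mathcal{Z}_o$ and the cost $x$ by $x_m$. The only points that are not a literal substitution are the compactness of $\mathcal{Z}_o$ and a small amount of sign bookkeeping caused by the fact that here the measure program \eqref{mum} is a \emph{minimization} rather than a maximization; everything else, in particular the closedness argument that delivers the zero duality gap, carries over word for word.

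First I would record that $\mathcal{Z}_o$ is compact, by the same reasoning that shows $\mathcal{Z}$ compact: since $p$ is monic in $s$ and $q$ ranges over the compact set $\mathcal{Q}$, the roots $s_k(q)$ stay in a fixed bounded subset of $\mathbb{C}$, so all the coordinates $x_1,\dots,x_m$ and $y_2,y_4,\dots,y_m$ are bounded on $\mathcal{Z}_o$; as $\mathcal{Z}_o$ is also closed, being cut out by polynomial equalities and inequalities, it is compact. Attainment of the infimum in \eqref{mum} is then immediate. The feasible set is nonempty, since the pushforward of the Lebesgue measure under $q \mapsto (q,x_1(q),\dots,x_m(q),y_2(q),\dots,y_m(q))$ is feasible; and by the marginal constraint (Remark \ref{marginal}, which applies verbatim to $\mathcal{Z}_o$) every feasible $\mu$ has total mass $\|\mu\| = \mathrm{vol}\,\mathcal{Q}$. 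Hence the feasible set is a bounded subset of $\mathscr{M}^+(\mathcal{Z}_o)$, which is weak-star compact because $\mathcal{Z}_o$ is compact, and the linear functional $\mu \mapsto \int_{\mathcal{Z}_o} x_m\,d\mu$ attains its infimum on it.

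For the absence of a duality gap I would again invoke \cite[Theorem IV.7.2]{barvinok}, whose canonical primal is a maximization over a cone of measures. Because \eqref{mum} is a minimization, I cast it into this form by setting $\mathbf{c} := -x_m$ (and correspondingly $\mathbf{y} := -w$), while keeping $E_1 := \mathscr{M}(\mathcal{Z}_o)$, $E_1^+ := \mathscr{M}^+(\mathcal{Z}_o)$, pre-duals $F_1 := \mathscr{C}(\mathcal{Z}_o)$, $F_1^+ := \mathscr{C}^+(\mathcal{Z}_o)$, $E_2 := \mathscr{M}(\mathcal{Q})$, $F_2 := \mathscr{C}(\mathcal{Q})$, $\mathbf{b}$ the Lebesgue measure on $\mathcal{Q}$, and $\mathbf{A}\mu := \pi_{\mathcal{Q}}\mu$ the marginal map. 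A direct Lagrangian computation confirms that, under this sign change, the dual produced by \cite[Theorem IV.7.2]{barvinok} is exactly \eqref{tau}, so that the theorem yields $\rho = \rho^*$ as soon as the cone $\{(\mathbf{A}\mu,\langle\mu,\mathbf{c}\rangle_1):\mu\in E_1^+\}$ is closed in $E_2\times\mathbb{R}$.

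The closedness verification is word for word the one in Lemma \ref{zero}: the maps $\mu\mapsto\mathbf{A}\mu$ and $\mu\mapsto\langle\mu,\mathbf{c}\rangle_1$ are weak-star continuous, and $E_1^+$ is weak-star closed because $\mathcal{Z}_o$ is compact. If $\mathbf{A}\mu_n\to\mathbf{a}$ in $E_2$, then since $\mathbf{A}$ preserves total mass one gets $\|\mu_n\|\to\|\mathbf{a}\|$, so $(\mu_n)$ is bounded; Banach--Alaoglu \cite{ash,barvinok} then extracts a weak-star convergent subsequence with limit $\mu\in E_1^+$, and continuity of $\mathbf{A}$ and $\mathbf{c}$ places the limit point in the cone. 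I expect no genuine obstacle here: the entire argument is structurally identical to the upper-approximation case, so that all the novelty is absorbed into the (routine) compactness of $\mathcal{Z}_o$ and the sign change that turns the minimization \eqref{mum} into barvinok's canonical maximization.
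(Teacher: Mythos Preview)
Your proposal is correct and follows exactly the paper's approach: the paper's own proof consists of the single sentence ``Since $\mathcal{Z}_o$ is compact, the proof is identical to that of Lemma \ref{zero},'' and you have simply spelled out that identical proof with the obvious substitutions $\mathcal{Z}\to\mathcal{Z}_o$, $x\to x_m$. Your explicit handling of the sign change (because \eqref{mum} is a minimization rather than a maximization) and of the compactness of $\mathcal{Z}_o$ are minor elaborations the paper takes for granted, not departures from its method.
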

Since $\mathcal{Z}_o$ is compact, the proof is identical to that of Lemma \ref{zero}. 

\begin{rem}\label{attainedd}
	For the same reasons as for the upper approximation \eqref{v}, the supremum in \eqref{tau} is not attained for $\mathscr{C}(\mathcal{Q})$ or $\mathbb{R}[q]$, but it is attained for $\mathbb{R}[q]_d$ with $d$ finite. See Remark \ref{attained} with $M := \min_{q \in \mathcal{Q}} a(q) - N$ for an $N \in \mathbb{N}$ sufficiently large, and $R := \int_{\mathcal{Q}} \left(a(q)-M\right)\, dq$.
\end{rem}

\subsubsection{SDP hierarchy}

Let $d_0 \in \mathbb{N}$ be sufficiently large. Then for $d \in \mathbb{N},\ d \geq d_0$ the corresponding hierarchy of SDP problems reads
\begin{align}
		\rho_d = & \sup_{w_d,\sigma_0,\sigma_k,\sigma_{x_l},\tau_{\Re,k},\tau_{\Im,k}} \int_\mathcal{Q} w_d(q)\, dq\label{taud}\\
		& \text{s.t.}\ x_m - w_d(q) = \sigma_0 + \sum_{j=1}^n \sigma_{j}(1-q_j^2) + \sum_{k=1}^{m-1} \sigma_{x_k}(x_m-x_k)\notag\\
		&\hspace{2.4cm} + \sum_{k=1}^m \tau_{\Re,k}\left((-1)^k p_{m-k}(q)-\Re\left(\sum_{1 \leq l_1<l_2<\cdots<l_k \leq m} s_{l_1}s_{l_2} \cdots s_{l_k}\right)\right)\notag\\
		&\hspace{2.4cm} + \sum_{k=\left\lfloor \tfrac{m}{2} \right\rfloor}^m \tau_{\Im,k}\Im\left(\sum_{1 \leq l_1<l_2<\cdots<l_k \leq m} s_{l_1}s_{l_2} \cdots s_{l_k}\right)\notag
\end{align}
for all $(q,x_1,\dotsc,x_m,y_2,y_4,\dotsc,y_{2\left\lfloor \tfrac{m}{2} \right\rfloor},y_m) \in \mathbb{R}^n \times \mathbb{R}^m \times \mathbb{R}^{\left\lceil \tfrac{m}{2} \right\rceil}$ and with $w_d \in \mathbb{R}[q]_{2d}$, $\sigma_0, \sigma_{x_k} \in \Sigma[q,x_1,\dotsc,x_m,\allowbreak y_2,y_4,\dotsc,y_m]_{2d}$ for $k=1,\dotsc,m-1$, $\sigma_k \in \Sigma[q,x_1,\dotsc,x_m,\allowbreak y_2,y_4,\dotsc,y_m]_{2d-2}$ for $k = 1, \dotsc, n$, $\tau_{\Re,k} \in \mathbb{R}[q,x_1,\dotsc,x_m,\allowbreak y_2,y_4,\dotsc,y_m]_{2d-k}$ for $k=1,\dotsc,m$ and $\tau_{\Im,k}$ for $k=\left\lfloor \tfrac{m}{2} \right\rfloor,\dotsc,m$.

\begin{rem}
As in Remark \ref{strength}, SDP \eqref{taud} is a strengthening of LP \eqref{tau}, meaning $\rho_d \leq \rho$. Also as in Remark \ref{qm}, the quadratic module corresponding to $\mathcal{Z}_o$ is archimedean, i.e. $\lim_{d\to\infty}\rho_d  = \rho$.
\end{rem}

We conclude the section with the following result:
\begin{thm} \label{converged}
	Let $w_d \in \mathbb{R}[q]_{2d}$ be a near optimal solution for SDP \eqref{taud}, i.e. $\int_\mathcal{Q} w_d(q)\, dq \geq \rho_d - \tfrac{1}{d}$ and consider the associated sequence $(w_d)_{d\geq d_0} \subset L^1(\mathcal{Q})$. Then $w_d$ converges to $a$ in $L^1$ norm in $\mathcal{Q}$.
\end{thm}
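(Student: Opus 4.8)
The plan is to mirror the proof of Theorem~\ref{converge}, reversing every inequality and exploiting the defining feature of $\mathcal{Z}_o$: on this set the coordinate $x_m$ equals the abscissa $a(q)$ \emph{exactly}, not merely a lower bound for it. I would first establish $\rho = \int_\mathcal{Q} a(q)\, dq$. By Lemma~\ref{zerod} it suffices to evaluate the dual \eqref{mum}. Since the elementary symmetric constraints together with the ordering constraints $x_k \leq x_m$ force $x_m = a(q)$ at every point of $\mathcal{Z}_o$, any feasible measure $\mu$ satisfies
\[
  \int_{\mathcal{Z}_o} x_m\, d\mu = \int_{\mathcal{Z}_o} a(q)\, d\mu = \int_\mathcal{Q} a(q)\, dq,
\]
the last equality being the marginal condition of Remark~\ref{marginal}. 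Thus \emph{every} feasible $\mu$ attains the same value and $\rho = \rho^* = \int_\mathcal{Q} a(q)\, dq$; equivalently, on the primal side feasibility of $w$ in \eqref{tau} is exactly $w \leq a$ on $\mathcal{Q}$, the continuous choice $w = a$ showing the bound is sharp.

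Next I would build, for each $\varepsilon > 0$, a feasible point of SDP~\eqref{taud} whose value is within $O(\varepsilon)$ of $\int_\mathcal{Q} a$. By the Stone-Weierstra{\ss} theorem choose $h_\varepsilon \in \mathbb{R}[q]$ with $\sup_{q\in\mathcal{Q}}|h_\varepsilon(q) - a(q)| < \tfrac{\varepsilon}{2}$ and set $w_\varepsilon := h_\varepsilon - \varepsilon$, so that $a - w_\varepsilon > \tfrac{\varepsilon}{2} > 0$ on $\mathcal{Q}$ and hence $x_m - w_\varepsilon(q) > 0$ strictly on $\mathcal{Z}_o$. As the quadratic module associated with $\mathcal{Z}_o$ is archimedean, Putinar's Positivstellensatz supplies multipliers $\sigma_0, \sigma_j, \sigma_{x_k}, \tau_{\Re,k}, \tau_{\Im,k}$ realizing the identity in \eqref{taud}; for $d$ large enough this tuple is feasible and yields $\rho_d \geq \int_\mathcal{Q} w_\varepsilon \geq \int_\mathcal{Q} a - \tfrac{3\varepsilon}{2}\,\mathrm{vol}\,\mathcal{Q}$. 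Combined with $\rho_d \leq \rho = \int_\mathcal{Q} a$ (the SDP being a strengthening of \eqref{tau}), letting $\varepsilon \to 0$ gives $\lim_{d\to\infty}\rho_d = \int_\mathcal{Q} a$.

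To pass to $L^1$ convergence I would note that the SOS identity defining feasibility forces $x_m - w_d(q) \geq 0$ throughout $\mathcal{Z}_o$ (the $\sigma$-terms are nonnegative there and the equality-constraint terms vanish), so that $w_d \leq a$ pointwise on $\mathcal{Q}$. Hence
\[
  0 \leq \|w_d - a\|_1 = \int_\mathcal{Q}\!\big(a(q) - w_d(q)\big)\, dq = \int_\mathcal{Q} a - \int_\mathcal{Q} w_d \leq \int_\mathcal{Q} a - \Big(\rho_d - \tfrac{1}{d}\Big) \longrightarrow 0,
\]
using the near-optimality $\int_\mathcal{Q} w_d \geq \rho_d - \tfrac{1}{d}$ and $\rho_d \to \int_\mathcal{Q} a$.

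The main difficulty is structural rather than analytic: the entire argument rests on the identity $x_m = a(q)$ holding throughout $\mathcal{Z}_o$, and on the pointwise bound $w_d \leq a$ that it forces. This is where one must verify that the variable elimination and the ordering constraints $x_m - x_k \geq 0$ faithfully encode the situation, i.e. that the reduced set $\mathcal{Z}_o$ still captures the full root multiset of $p(q,\cdot)$ for every $q$ and that $x_m$ is genuinely the maximal real part. This fidelity is exactly what separates \eqref{tau} from the naive lower LP \eqref{vlower}, whose solution only reaches the \emph{minimal} real part of the roots.
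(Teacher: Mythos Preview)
Your proposal is correct and follows essentially the same route as the paper, which explicitly defers to the proof of Theorem~\ref{converge} with the sole remark that the first part shortens because $x_m = a(q)$ on $\mathcal{Z}_o$---precisely the simplification you exploit. Your write-up in fact fills in the details the paper omits, including the reversed Stone--Weierstra{\ss}/Putinar step and the pointwise bound $w_d \leq a$ from feasibility.
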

Unsurprisingly, one can prove this result in exactly the same way as Theorem \ref{converge}, so we do not detail the proof here. Remark that the first part of the proof can be shortened, since $\int_{\mathcal{Z}_o} x_m\, d\mu(q,x_1,\dotsc,x_m,\allowbreak y_2,y_4,\dotsc,y_m) = \int_{\mathcal{Z}_o} a(q)\, d\mu(q,x_1,\dotsc,x_m,\allowbreak y_2,y_4,\dotsc,y_m)$.

\subsubsection{Examples}

Just as the upper abscissa approximation automatically approximates the stability region from inside, the lower approximation gives, as a side effect, an outer approximation. In this section we will examine similar examples as for the upper approximation.

\begin{figure}[h]
\centerline{\includegraphics[width=0.5\textwidth]{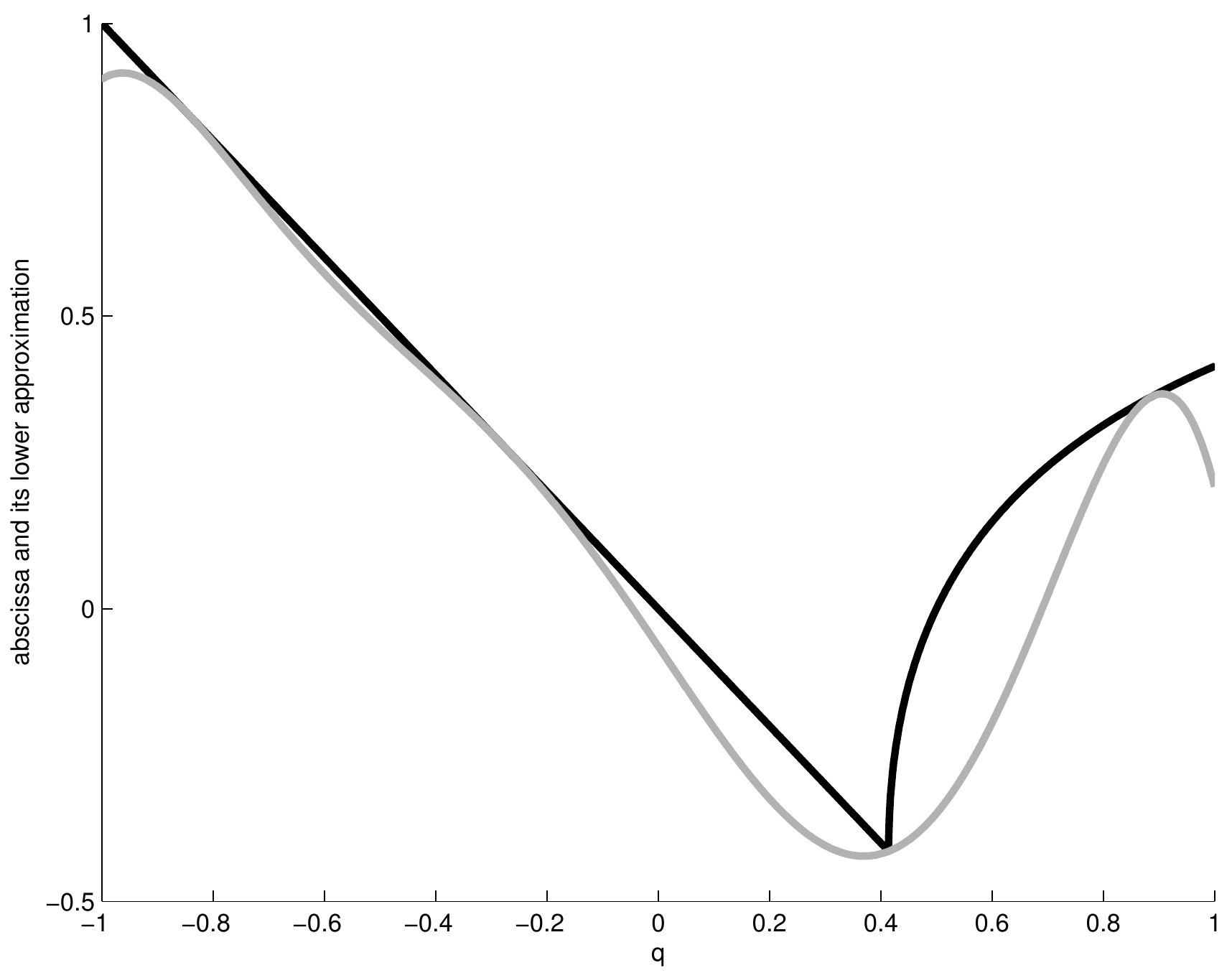}\includegraphics[width=0.5\textwidth]{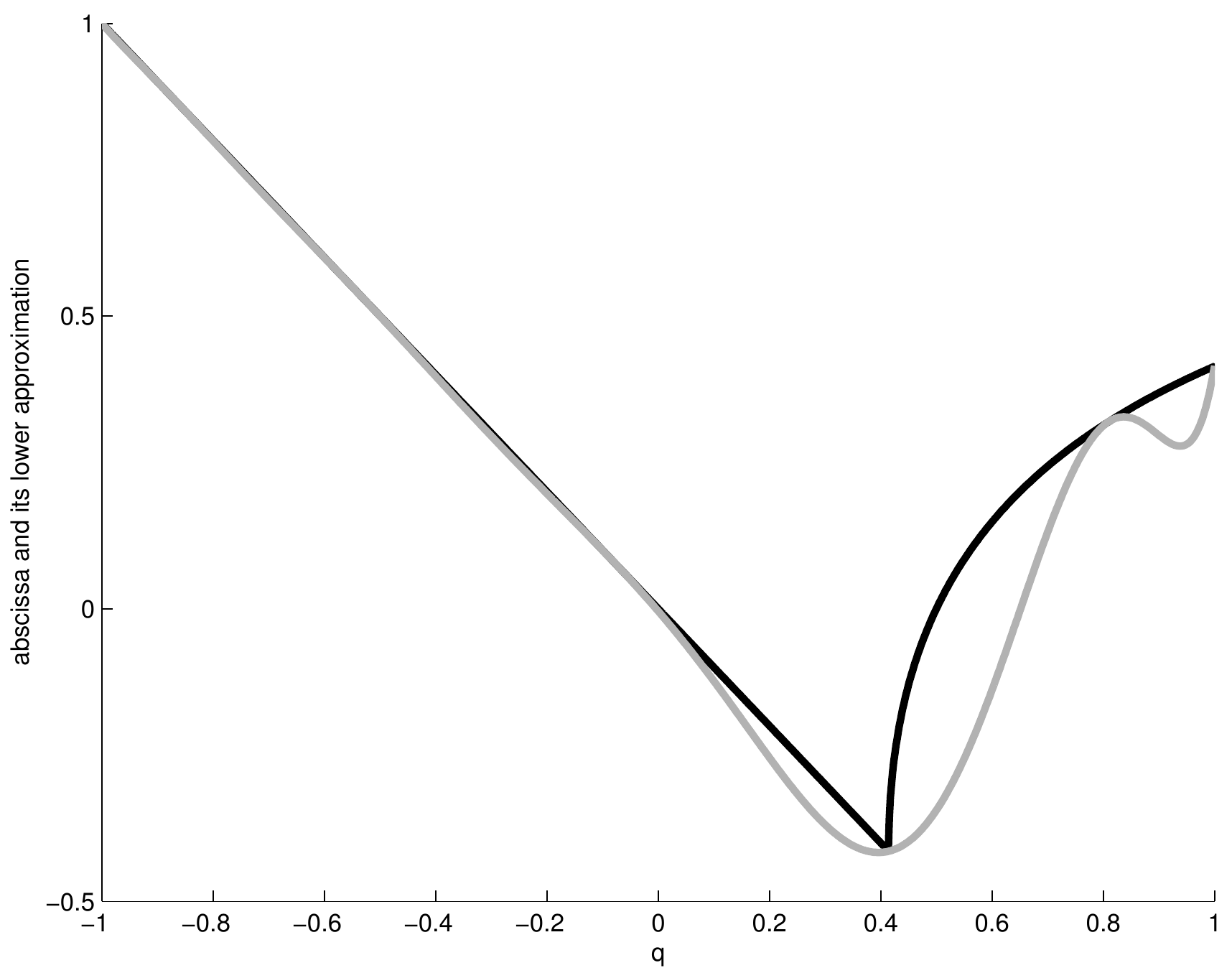}}
\caption{Abscissa (black) and its polynomial lower approximations of degree 6 (gray, left) and 10 (gray, right)
for Example \ref{expl1d}. The quality of the approximation deteriorates near the minimum, where the
abscissa is not Lipschitz, compare with Figure \ref{fig-expl1}.}\label{fig-expl1d}
\end{figure}

\begin{expl}\label{expl1d}
As in Example \ref{expl1} consider the polynomial
\begin{equation*}
	p:\ s \mapsto p(q,s) = s^2 + 2qs + 1-2q.
\end{equation*}
We have $y_1=-y_2$, so $\mathcal{Z}_o := \{(q,x_1,x_2,y_2) \in \mathcal{Q} \times \mathbb{R}^3 : x_1 \leq x_2, \ -2q = x_1 + x_2,\ 1-2q = x_1x_2 + y_2^2,\ 0 = (x_1-x_2)y_2\}$. 
In Figure \ref{fig-expl1d} we see the graphs of the degree 6 and 10 polynomial lower approximations obtained by
solving SDP  (\ref{taud}). As in Example \ref{expl-lower}, we observe that the SDP solver Mosek does not return
a correct degree 10 polynomial, and we had to use the SDP solver SeDuMi instead in this case.
Due to the rather big amount of variables and constraints, computing the degree 10 solution is already relatively expensive, with a few seconds of CPU time.
\end{expl}

\begin{figure}[h]
\centerline{\includegraphics[width=0.5\textwidth]{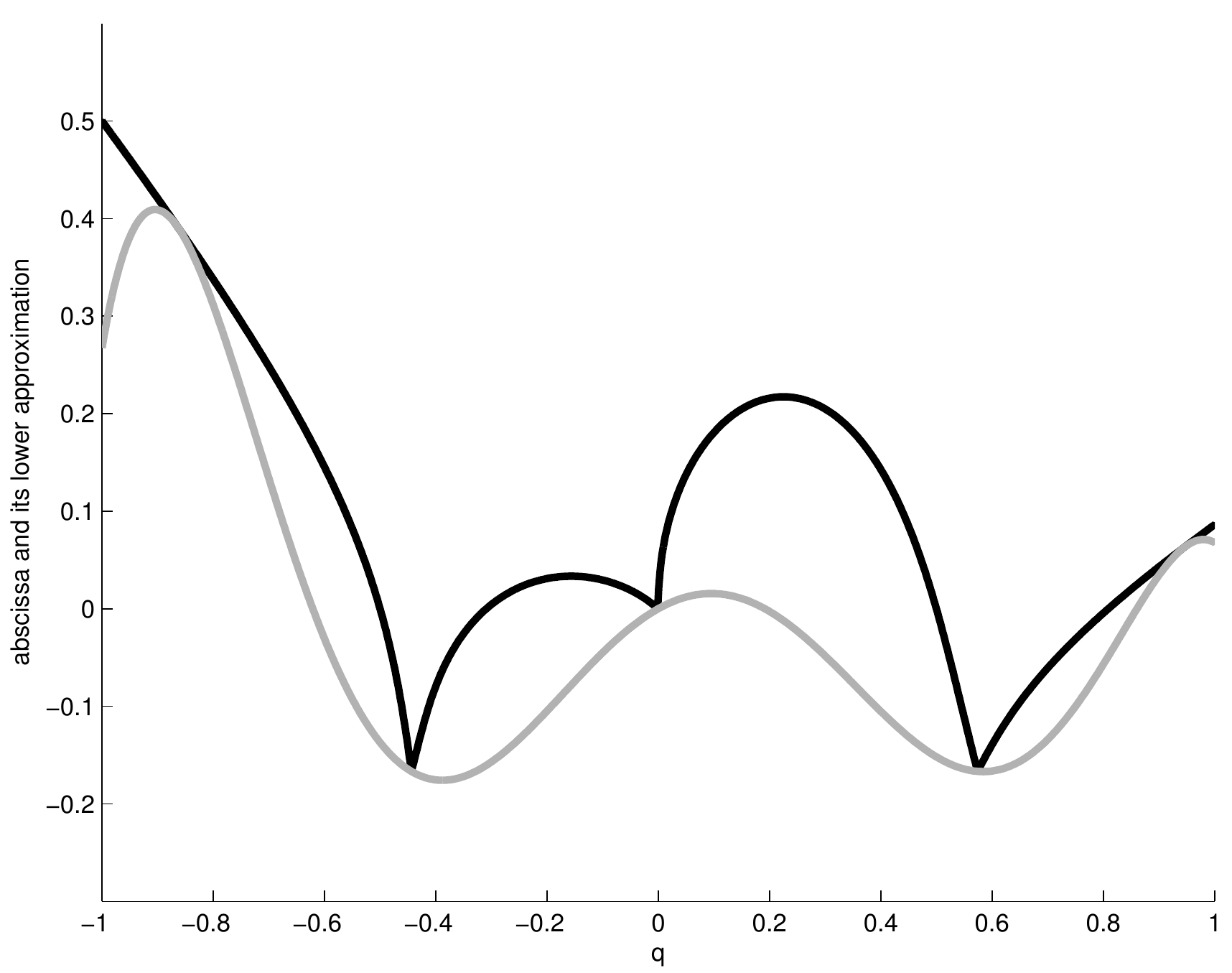}\includegraphics[width=0.5\textwidth]{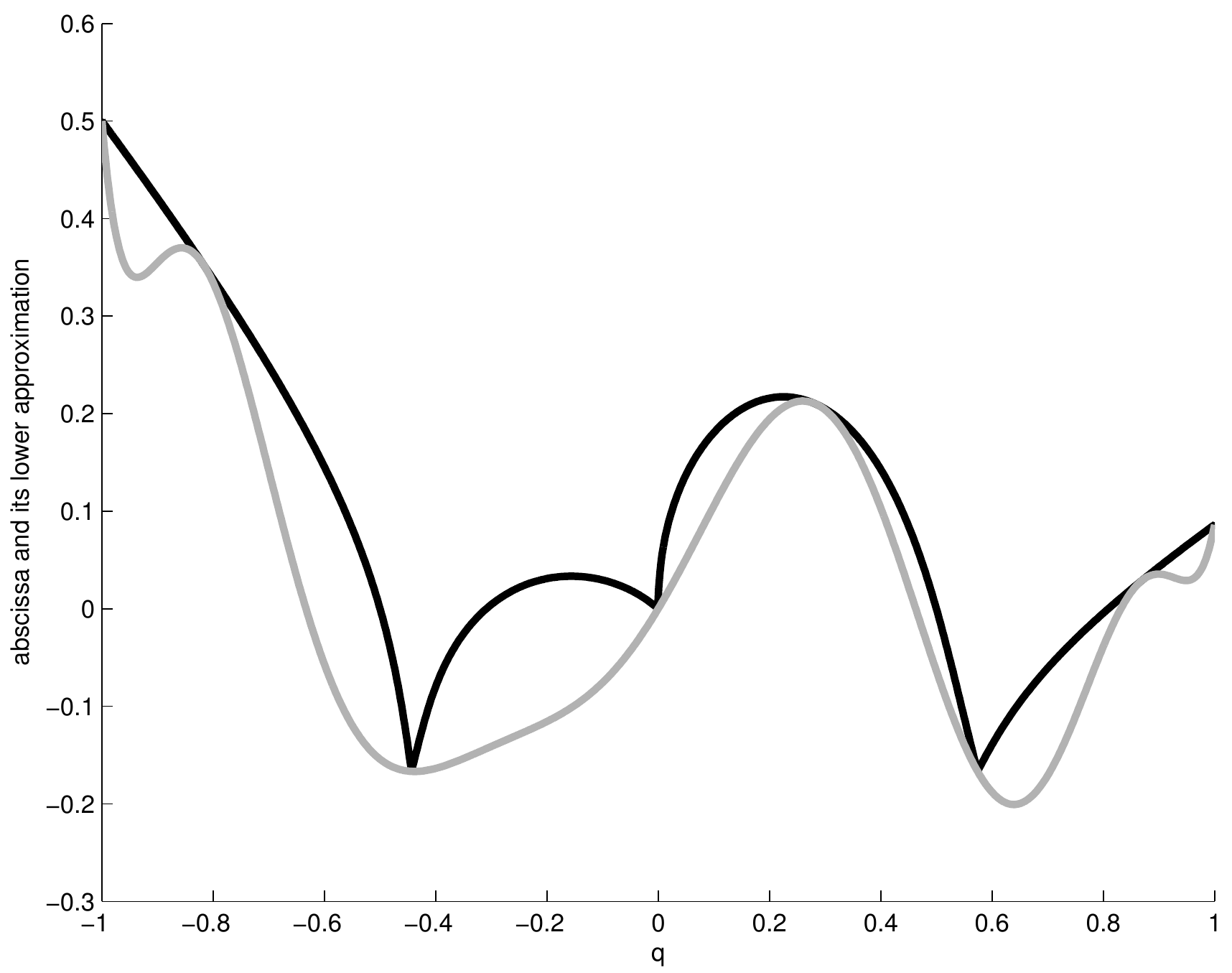}}
\caption{Abscissa (black) and its polynomial lower approximations of degree 6 (gray, left) and 10 (gray, right)
for Example \ref{expl2d}. The quality of the approximation deteriorates near the minimum, where the
abscissa is not differentiable, compare with Figure \ref{fig-expl2}.}\label{fig-expl2d}
\end{figure}

\begin{expl}\label{expl2d}
As in Example \ref{expl2} consider the polynomial
	\begin{equation*}
	p:\ s \mapsto p(q,s) = s^3 + \tfrac{1}{2}s^2 + q^2s + (q-\tfrac{1}{2})q(q+\tfrac{1}{2}).
\end{equation*}
With $y_1=-y_2-y_3$ we calculate $\mathcal{Z}_o$ as in Example \ref{explm3}. In Figure \ref{fig-expl2d} we see the graphs of the degree 6 and 10 polynomial lower approximations obtained by
solving SDP (\ref{taud}). The computation time to get the degree 10 solution is around 15 minutes, which
is arguably not a good compromise given the quality of the approximation.
\end{expl}

\begin{rem}
As for the upper abscissa approximation, we observe practically that  the implementation for the lower approximation is rather sensitive to polynomials with large coefficients.
\end{rem}
%
%
%
%

\begin{figure}[h]
\centerline{\includegraphics[width=0.5\textwidth,height=0.3\textheight]{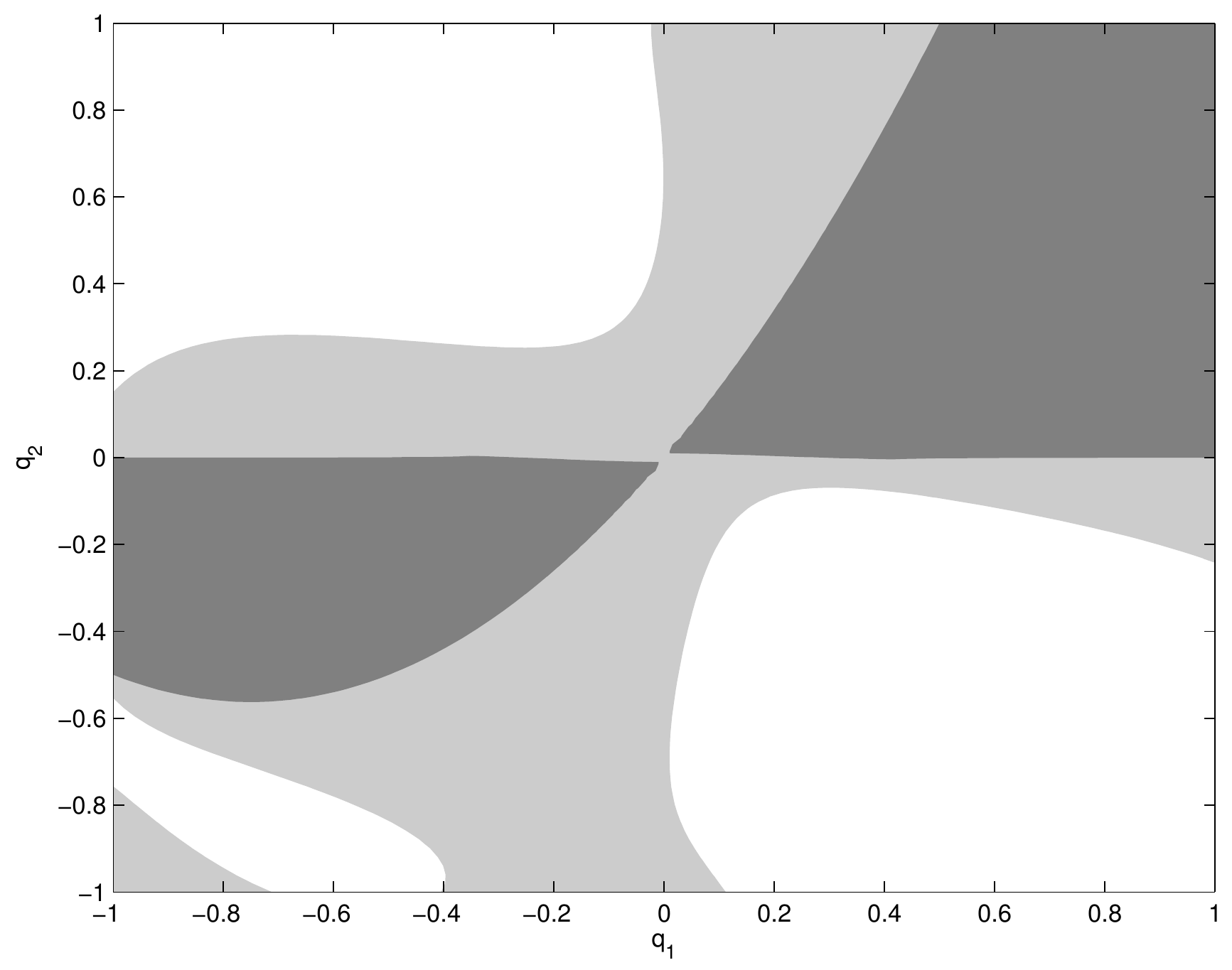}
\includegraphics[width=0.5\textwidth,height=0.3\textheight]{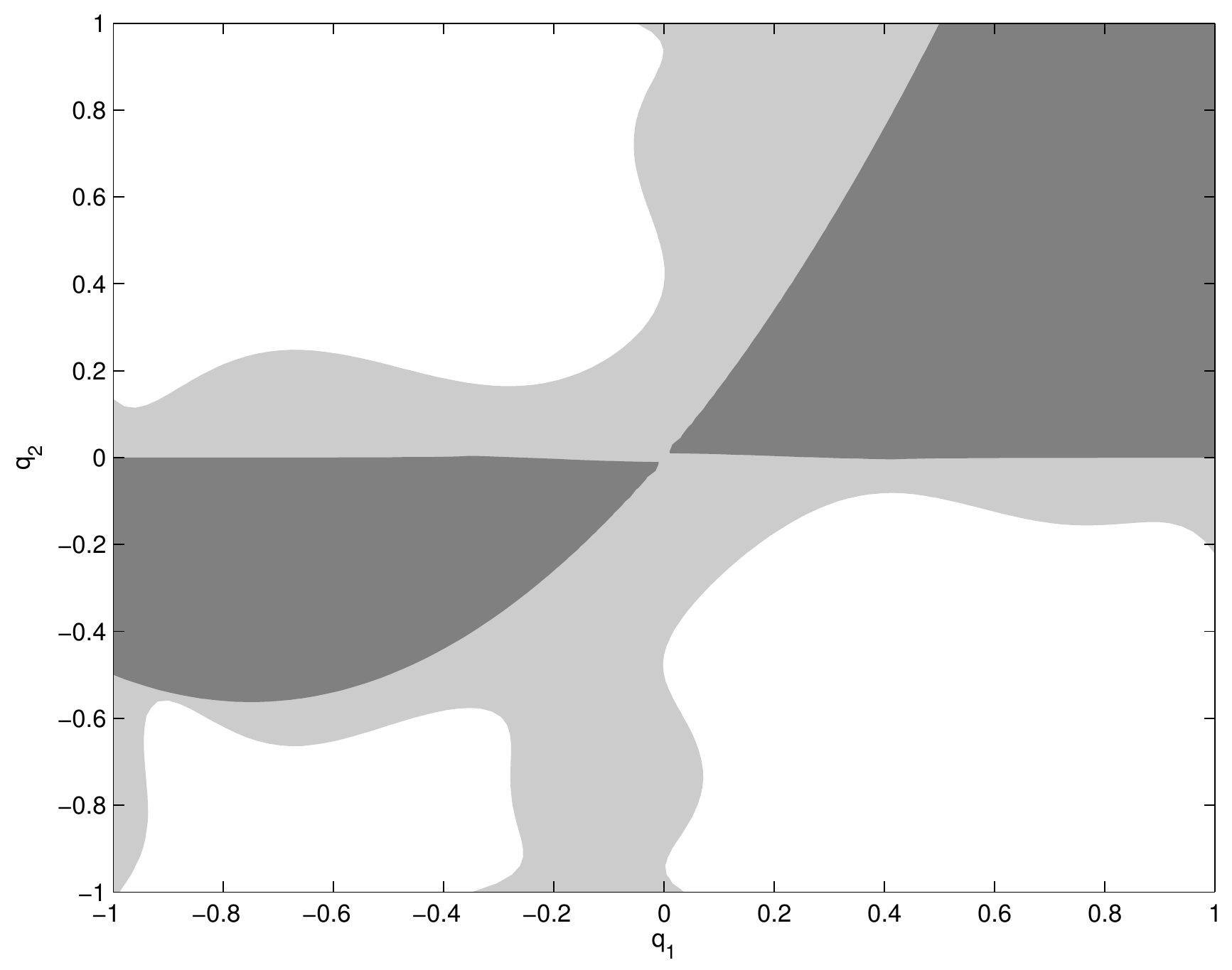}}
\caption{Stabilizability region (dark gray region) and its degree 6 outer approximation (light gray region, left) and degree 10 outer approximation (light gray region, right). Compare with Figure \ref{fig-expl3bis}.}\label{fig-expl3d}
\end{figure}

\begin{expl}\label{expl3d}
	As in Example \ref{expl3}, consider the polynomial 
	\begin{equation*}
		p:\ s \mapsto p(q,s) = s^3 + (q_1+\tfrac{3}{2}) s^2 + q_1^2s + q_1q_2.
	\end{equation*}	
	Since we have $m=3$, the set $\mathcal{Z}_o $ is again given in Example \ref{explm3}. 
In Figure \ref{fig-expl3d} we see the outer approximation of degrees $6$ and $10$ obtained by solving SDP (\ref{taud}).
We notice an opening in the approximation of the stability region in the lower half of the picture. This is due to $a$ being zero and non-smooth for $q_1 = 0$, meaning $a(0,q_2)=0$. This phenomenon also incapacitates $w_8$ to get tighter to $a$ for $q_2 > 0$ than we observe in the upper half of the picture. 
\end{expl}

\subsection{Lower approximation via Gau{\ss}-Lucas}\label{lowergl}

\subsubsection{Problem formulation}

As indicated above, we want to find a semi-algebraic subset of $\mathcal{Z}$ which contains only those roots of $p$ whose real part is maximal. This means that, in contrast to the approach of Section \ref{esf},
we will not rephrase $\mathcal{Z}$, but formulate further constraints.

In order to do this we must distinguish between the roots of $p(q,\cdot)$ according to the size of their real parts. For this purpose we use the following result:

\begin{thm}[Gau{\ss}-Lucas]\label{gl}
The critical points of a non-constant polynomial lie in the convex hull of its roots.
\end{thm}

We refer to \cite{gausslucas_pisa} for further information and a proof. Let us denote the derivative of $p(q,s)$ with respect to $s$ by $p'(q,s)$. By Theorem \ref{gl}, the roots of $p'(q,\cdot)$ are contained in the convex hull of the roots of $p(q,\cdot)$. It follows readily that the abscissa $a_{p'}$ of $p'$ lies below the abscissa $a_p$ of $p$:
\begin{equation*}
	a_{p'}(q) \leq a_p(q)\ \text{for all } q \in \mathcal{Q}.
\end{equation*}

However, $p$ may have some roots with real part strictly smaller than $a_p$ and strictly bigger than $a_{p'}$, meaning that the root whose real part is the abscissa is not the only one whose real part lies above $a_{p'}$. Of course, this cannot happen for real polynomials $\mathbb{R} \to \mathbb{R}$ because of monotonicity, and neither for complex polynomials of degree 2. But, for example, for $n=1$ the polynomial $p(q,s) = s^4 + (q^2+1)s + q$ has two roots with different real parts greater than $a_{p'}$ for $q \in [-1,-0.4]$.

To prevent the lower abscissa approximation from converging to the real part of a root smaller than the abscissa, we make the following assumption:

\begin{ass}\label{ass1}
	None of the real parts of any root of $p$ lies strictly between $a_p$ and $a_{p'}$, i.e. $x \notin {]a_{p'}(q), a_p(q)[}$ for all 
$(q,x,y) \in \mathcal{Z}$.
\end{ass}

\begin{rem}
Unfortunately, we do not know how restrictive this assumption is. For $n=1$ it was rather difficult to find examples that violate it.
\end{rem}

Now let $\hat{v} \in \mathscr{C}(\mathcal{Q})$ be a near optimal solution to  LP \eqref{v} for the polynomial $p'$, meaning $\int_\mathcal{Q} \hat{v}(q)\, dq \leq \rho + \varepsilon$ for an $\varepsilon > 0$. Then, $\hat{v}$ is an upper approximation of the abscissa $a_{p'}$ of $p'$. We define the following subset:
\begin{equation*}
	\mathcal{Z}_{r} := \{(q,x,y) \in \mathcal{Z} \mid x - \hat{v}(q) \geq 0\}.
\end{equation*}
In order to see where we are going, let us pretend for a moment that we have an optimal solution. Then, under Assumption \ref{ass1}, the set $\mathcal{Z}_r$ would contain exactly the points $(q,a(q),y_q)$ with $y_q$ denoting the imaginary part of the root of $p(q,\cdot)$ with maximal real part. Hence, the solution to the following LP would give a lower approximation of the abscissa function $a_p$ of $p$:
\begin{align}
	& \sup_{w \in \mathscr{C}[q]}  \int_\mathcal{Q} w(q)\, dq \label{wv}\\
	& \text{s.t.}\quad x - w(q) \geq 0 \text{ for all } (q,x,y) \in \mathcal{Z}_r. \notag
\end{align}
Since $\hat{v}$ might not be optimal, the projection of $\mathcal{Z}_r$ onto $\mathcal{Q}$ can have holes of volume $\varepsilon$. As a consequence, $w$ might not be a valid lower bound of the abscissa on these holes.\par\smallskip

Taking this into account, we build an SDP hierarchy for LP \eqref{wv} in the next section. The issue is that we have to consider the hierarchy for the upper approximation of $a_{p'}$ first and the solution to it might interfere with $a_p$.

\subsubsection{SDP hierarchy}

For $d'_0 \in \mathbb{N}$ sufficiently large we denote by $\hat{v}_{d'}$, $d' \geq d'_0$, the solutions to SDP \eqref{vd} for the polynomial $p'$. Thus, the $\hat{v}_{d'}$ are polynomials in $\mathbb{R}[q]_{2d'}$ and by Theorem \ref{converge} the sequence $(\hat{v}_{d'})_{d' \in \mathbb N}$ converges to $a_{p'}$ from above in $L^1$ norm.

Next, we want to describe the set $\mathcal{Z}_r$ via the polynomials $\hat{v}_{d'}$ in order to have an implementable problem, i.e. we define
\begin{equation*}
	\mathcal{Z}_{r,d'} := \{(q,x,y) \in \mathcal{Z} : x - \hat{v}_{d'}(q) \geq 0\}.
\end{equation*}
Of course, the set $\mathcal{Z}_{r,d'}$ is highly dependent on the quality of $\hat{v}_{d'}$ and hence on the choice of $d'$. Evidently, $\mathcal{Z}_{r,d'}$ is a subset of $\mathcal{Z}_r$, possibly strictly. To ensure that $\mathcal{Z}_{r,d'}$ contains all roots of $p$ with the abscissa as their real parts we need $\hat{v}_{d'} \leq a_p$. However, in practice this is impossible in some cases:

\begin{expl}
The abscissa $a_p$ of $p(q,s) = (s^3+q)^2$ and the abscissa $a_{p'}$ of $p'$ coincide and have a point of non-differentiability at $q=0$. As another example consider the polynomial $p(q,s) = s^4 + qs$ for which both $a_p$ and $a_{p'}$ are not differentiable at $q = 0$ and $a_p(0) = a_{p'}(0) = 0$.
\end{expl}

For these examples we cannot achieve $\hat{v}_{d'} \leq a_p$ with $d'$ finite, since $\hat{v}_{d'}$ is a polynomial and therefore differentiable everywhere.

As a consequence, we  formulate another assumption. In general, the points that may cause problems are the ones where $a_p$ and $a_{p'}$ coincide, i.e. the points of the set
\begin{equation*}
	\mathcal{D} := \{q \in \mathcal{Q} \mid a_p(q) = a_{p'}(q)\}.
\end{equation*}
On this set the polynomial $\hat{v}_{d'}$ should approximate $a_{p'}$ perfectly for a finite $d'$. Calling a solution $\hat{v}_{d'}$ near optimal if it satisfies $\int_\mathcal{Q} \hat{v}_{d'}(q)\, dq \leq \rho_{d'} + \tfrac{1}{d'}$, we assume:

\begin{ass}\label{ass2}
There is a near optimal solution $\hat{v}_{d'}$ to SDP \eqref{vd} for the polynomial $p'$ with $d'$ finite such that $\hat{v}_{d'}$ and $a_{p'}$ coincide on $\mathcal{D}$.
\end{ass}

\begin{rem}
	A sufficient condition for a violation of Assumption \ref{ass2} is the existence of a value of $q$ for which $a_{p'}$ is not differentiable and $a_p(q) = a_{p'}(q)$.
	This is the case for the examples given above. Note also that they are of degenerate nature.
\end{rem}

	To face another issue, we denote the projection of $\mathcal{Z}_{r,d'}$ onto the set $\mathcal{Q}$ by $\pi_\mathcal{Q}(\mathcal{Z}_{r,d'})$, i.e.
\begin{equation*}
	\pi_\mathcal{Q}(\mathcal{Z}_{r,d'}) = \{q \in \mathcal{Q} : \exists x,y \in \mathbb{R}: (q,x,y) \in \mathcal{Z}_{r,d'}\}.
\end{equation*}
Since $\hat{v}_{d'}$ converges to $a$ in $L^1$, but not necessarily uniformly, it might have spikes or similar irregularities, meaning that the set $\mathcal{Q} \setminus \pi_\mathcal{Q}(\mathcal{Z}_{r,d'})$ is not empty.
However, the $L^1$ convergence of $\hat{v}_{d'}$, or more precisely the convergence in measure, implies that there is a subsequence $(\hat{v}_{d'_l})_{l \in \mathbb{N}}$ which converges to $a_{p'}$ almost uniformly (see e.g. \cite[Theorem 2.5.3]{ash}). In other words, for all $\delta >0$, there exists a set $\mathcal{A}_\delta$ in the Borel sigma algebra of $\mathcal{Q}$ such that $\int_{\mathcal{A}_\delta} dq < \delta$ and $\hat{v}_{d'_l}$ converges  uniformly on $\mathcal{A}_\delta^C$ to $a_{p'}$ when $l\to\infty$, where $\mathcal{A}_\delta^C$ is the set-theoretic complement of $\mathcal{A}_\delta$ in $\mathcal{Q}$. With this notation we have
\begin{equation*}
	\pi_\mathcal{Q}(\mathcal{Z}_{r,d'}) \subseteq \mathcal{A}_\delta^C \subseteq \mathcal{Q}.
\end{equation*}
\begin{lem}\label{v_p'}
Let Assumption \ref{ass2} hold. Then, for every $\delta > 0$ there is a finite $d' \in \mathbb N$
such that $\hat{v}_{d'} \leq a_p$ on $\mathcal{A}_\delta^C$.
\end{lem}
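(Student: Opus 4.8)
The plan is to recast the desired inequality as a bound on the \emph{overshoot} of the upper approximation against the Gau{\ss}--Lucas gap. Each $\hat{v}_{d'}$ is feasible for the upper approximation of $a_{p'}$, so by Theorem \ref{converge} we have $\hat{v}_{d'} \geq a_{p'}$ on $\mathcal{Q}$, while by Theorem \ref{gl} the gap $g := a_p - a_{p'}$ is continuous, nonnegative, and has zero set exactly $\mathcal{D}$. Writing $\hat{v}_{d'} - a_p = (\hat{v}_{d'} - a_{p'}) - g$, the claim $\hat{v}_{d'} \leq a_p$ is equivalent to $0 \leq \hat{v}_{d'} - a_{p'} \leq g$. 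The left inequality is automatic, and since $\hat{v}_{d'_l} \to a_{p'}$ uniformly on $\mathcal{A}_\delta^C$ along the subsequence, the right one is easy wherever $g$ is bounded away from $0$; the entire difficulty is concentrated near $\mathcal{D}$, where $g$ degenerates.

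First I would isolate $\mathcal{D}$ by a thin neighbourhood. By outer regularity of the Lebesgue measure, choose an open $U \supseteq \mathcal{D}$ with $\mathrm{vol}(U \setminus \mathcal{D}) < \delta/2$. The complement $\mathcal{Q} \setminus U$ is compact and disjoint from $\mathcal{D} = \{g = 0\}$, so the continuous function $g$ attains a strictly positive minimum $\eta > 0$ there. Invoking the almost uniform convergence already recorded before the statement, with tolerance $\delta/2$, yields a set of volume $<\delta/2$ on whose complement the convergence $\hat{v}_{d'_l} \to a_{p'}$ is uniform; I then take $\mathcal{A}_\delta$ to be the union of that set with the thin shell $U \setminus \mathcal{D}$. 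This $\mathcal{A}_\delta$ still has volume $< \delta$ and still carries uniform convergence on its complement, and crucially $\mathcal{A}_\delta^C \subseteq (\mathcal{Q} \setminus U) \cup \mathcal{D}$.

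It then remains to verify the bound on the two pieces of this complement. On $\mathcal{A}_\delta^C \cap \mathcal{D}$, Assumption \ref{ass2} supplies a near optimal $\hat{v}_{d'}$ coinciding with $a_{p'}$ there, whence $\hat{v}_{d'} = a_{p'} = a_p$ and the inequality holds with equality. On $\mathcal{A}_\delta^C \cap (\mathcal{Q} \setminus U)$, where $g \geq \eta$, uniform convergence gives for all sufficiently large $l$ the estimate $0 \leq \hat{v}_{d'_l} - a_{p'} < \eta \leq g$, hence $\hat{v}_{d'_l} < a_p$. Choosing $d' = d'_l$ large enough for this last estimate, and among the near optimal solutions that coincide with $a_{p'}$ on $\mathcal{D}$ as furnished by Assumption \ref{ass2}, produces a single finite $d'$ for which $\hat{v}_{d'} \leq a_p$ on all of $\mathcal{A}_\delta^C$.

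The main obstacle is precisely the degeneration of $g$ as one approaches $\mathcal{D}$: a naive appeal to uniform convergence fails, because the overshoot, however small, need not stay below a vanishing gap. The neighbourhood/outer-regularity splitting is what quarantines this region, and Assumption \ref{ass2} becomes indispensable when $\mathcal{D}$ has positive measure, since on $\mathcal{D}$ no polynomial can undercut $a_p$ except by matching it exactly. A secondary point to handle with care is the compatibility of the single finite degree furnished by Assumption \ref{ass2} with the larger degree demanded by uniform smallness; I would reconcile this by extracting the almost uniformly convergent subsequence from near optimal solutions that all coincide with $a_{p'}$ on $\mathcal{D}$.
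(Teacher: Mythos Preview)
Your approach is essentially the same as the paper's: both decompose $\hat v_{d'}-a_p=(\hat v_{d'}-a_{p'})-g$ with $g=a_p-a_{p'}\geq 0$, invoke Assumption~\ref{ass2} on $\mathcal D=\{g=0\}$, and use the almost uniform convergence of $\hat v_{d'_l}$ to $a_{p'}$ on $\mathcal A_\delta^C$ for the remainder. The paper simply partitions $\mathcal A_\delta^C$ into $\{a_p>a_{p'}\}$ and $\{a_p=a_{p'}\}$ and asserts that uniform convergence settles the first piece, then takes $d'_{l'}\geq d'$ with $l'\geq l^*$ to combine the two.

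Where you differ is in being more explicit about the degeneration of $g$ near $\mathcal D$: you quarantine a thin open shell $U\setminus\mathcal D$ of small volume and absorb it into $\mathcal A_\delta$, so that on the remaining compact set $\mathcal Q\setminus U$ the gap is bounded below by some $\eta>0$ and uniform smallness of the overshoot genuinely suffices. This is a legitimate refinement, since $\mathcal A_\delta$ is only existentially quantified in the text preceding the lemma; your construction still satisfies $\int_{\mathcal A_\delta}dq<\delta$ and uniform convergence on $\mathcal A_\delta^C$. The paper's version glosses over this point, so your argument is in fact the more careful of the two. Your final concern about reconciling the single $d'$ supplied by Assumption~\ref{ass2} with the subsequence index $d'_l$ is also shared by the paper (which writes ``Taking $d'_{l'}\geq d'$ with $l'\geq l^*$''); both arguments implicitly read Assumption~\ref{ass2} as furnishing such near optimal solutions for all sufficiently large $d'$, and your proposed resolution is the same as the paper's.
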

\begin{proof} Fix $\delta > 0$. Obviously we want
	\begin{equation}\label{a}
		0 \leq a_p(q) - \hat{v}_{d'}(q) = a_p(q) - a_{p'}(q) + a_{p'}(q) - \hat{v}_{d'}(q)
	\end{equation}	
	for every $q \in \mathcal{A}_\delta^C \subseteq \mathcal{Q}$. By Theorem \ref{gl}, we have $a_p(q) - a_{p'}(q) \geq 0$ for all $q \in \mathcal{Q}$. Otherwise, the difference $a_{p'}(q) - \hat{v}_{d'}(q)$ is negative by construction, but due to Theorem \ref{converge} we find a subsequence $\hat{v}_{d'_l}$ converging uniformly to $a_{p'}$ on $\mathcal{A}_\delta^C$.
	Hence, there is a finite $d'_{l^*}$ such that \eqref{a} is fulfilled for all $q \in \{q \in \mathcal{A}_\delta^C : a_p(q) > a_{p'}(q)\}$. Because of Assumption \ref{ass2} there is also a finite $d'$ such that $a_{p'}(q) - \hat{v}_{d'}(q)$ vanishes on $\{q \in \mathcal{A}_\delta^C : a_p(q) = a_{p'}(q)\}$. Taking $d'_{l'} \geq d'$ with $l' \geq l^*$ completes the proof. 
\end{proof}
\begin{rem}
Choosing $d'$ according to Lemma \ref{v_p'} implies $\pi_\mathcal{Q}(\mathcal{Z}_{r,d'}) = \mathcal{A}_\delta^C$.
\end{rem}

Under Assumption \ref{ass1} and \ref{ass2} and with an appropriate choice of $d'$ (depending on $\delta$) the solution to the following LP gives a lower approximation of the abscissa function $a_p$ of $p$ on the set $\mathcal{A}_\delta^C \subseteq \mathcal{Q}$:
\begin{align}
	\rho_{d'} = & \sup_{w \in \mathscr{C}[q]}  \int_{\pi_\mathcal{Q}(\mathcal{Z}_{r,d'})} w(q)\, dq \label{wvd}\\
	& \text{s.t.}\quad x - w(q) \geq 0 \text{ for all } (q,x,y) \in \mathcal{Z}_{r,d'}. \notag
\end{align}
\begin{rem}
	Note that under Assumption \ref{ass1}, LP \eqref{wvd} always provides a proper approximation for the abscissa $a_p$ from below on $\pi_\mathcal{Q}(\mathcal{Z}_{r,d'})$, but this might not be very useful, since for bad $\hat{v}_{d'}$ this set may have big holes or even be empty. To achieve suitable results on $\mathcal{A}_\delta^C$ we need Assumption \ref{ass2} and an appropriate $d'$, meaning a sufficiently good $\hat{v}_{d'}$ ensuring $\pi_\mathcal{Q}(\mathcal{Z}_{r,d'}) = \mathcal{A}_\delta^C$. 
\end{rem}	
In analogy with \eqref{mu}, the dual LP reads
\begin{align}
	\rho_{d'}^* = & \inf_{\mu \in \mathscr{M}^+(\mathcal{Z}_{r,d'})}  \int_{\mathcal{Z}_{r,d'}} x\ d\mu(q,x,y) \label{mugl}\\
	& \text{s.t.}  \int_{\mathcal{Z}_{r,d'}} q^\alpha\, d\mu = \int_{\pi_\mathcal{Q}(\mathcal{Z}_{r,d'})} q^\alpha\, dq, \text{ for all } \alpha \in \mathbb{N}^n \notag
\end{align}
with the notation of Section \ref{sec1}.
\begin{lem}\label{zerogl}
The infimum in LP \eqref{mugl} is attained, and there is no duality gap
between LP \eqref{wvd} and LP \eqref{mugl}, i.e. $\rho_{d'} = \rho_{d'}^*$.
\end{lem}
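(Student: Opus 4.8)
The plan is to follow the proof of Lemma~\ref{zero} almost verbatim, the only structural change being that the compact set $\mathcal{Z}$ is replaced by the subset $\mathcal{Z}_{r,d'}$ and the full Lebesgue measure on $\mathcal{Q}$ is replaced by its restriction to the projection $\pi_\mathcal{Q}(\mathcal{Z}_{r,d'})$. Accordingly, the whole argument reduces to re-checking compactness and then transcribing the weak-star compactness and cone-closedness steps.

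First I would establish compactness of $\mathcal{Z}_{r,d'}$. Since $\hat{v}_{d'} \in \mathbb{R}[q]_{2d'}$ is continuous, the inequality $x - \hat{v}_{d'}(q) \geq 0$ cuts out a closed subset of $\mathbb{R}^n \times \mathbb{R}^2$; intersecting it with the compact set $\mathcal{Z}$ yields a closed subset of a compact set, hence a compact set. Its continuous image $\pi_\mathcal{Q}(\mathcal{Z}_{r,d'})$ is then compact as well, so the Lebesgue measure restricted to $\pi_\mathcal{Q}(\mathcal{Z}_{r,d'})$ is a well-defined finite Borel measure and serves as the legitimate right-hand side $\mathbf{b}$.

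For attainment, the marginal constraint in LP~\eqref{mugl} taken with $\alpha = 0$ forces $\Vert\mu\Vert = \int_{\mathcal{Z}_{r,d'}} d\mu = \int_{\pi_\mathcal{Q}(\mathcal{Z}_{r,d'})} dq = \mathrm{vol}\,\pi_\mathcal{Q}(\mathcal{Z}_{r,d'}) < \infty$, so the feasible set is a bounded subset of $\mathscr{M}^+(\mathcal{Z}_{r,d'})$. As $\mathcal{Z}_{r,d'}$ is compact, this set is weak-star compact by Banach-Alaoglu's theorem \cite{ash,barvinok}; the objective $\mu \mapsto \int_{\mathcal{Z}_{r,d'}} x\, d\mu$ is linear and weak-star continuous (the coordinate $x$ being continuous on the compact set $\mathcal{Z}_{r,d'}$), so its infimum over the feasible set is attained. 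Nonemptiness of this set, for the choices of $d'$ furnished by Lemma~\ref{v_p'}, is seen by exhibiting the measure concentrated on the maximal-real-part roots $(q,a_p(q),y_q)$ over $\pi_\mathcal{Q}(\mathcal{Z}_{r,d'})$, which lies in $\mathcal{Z}_{r,d'}$ precisely because $\hat{v}_{d'} \leq a_p$ there.

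Zero duality gap then follows from \cite[Theorem IV.7.2]{barvinok} in the same canonical form as in Lemma~\ref{zero}: take $E_1 := \mathscr{M}(\mathcal{Z}_{r,d'})$ with cone $E_1^+ := \mathscr{M}^+(\mathcal{Z}_{r,d'})$, predual $F_1 := \mathscr{C}(\mathcal{Z}_{r,d'})$, together with $E_2 := \mathscr{M}(\pi_\mathcal{Q}(\mathcal{Z}_{r,d'}))$ and $F_2 := \mathscr{C}(\pi_\mathcal{Q}(\mathcal{Z}_{r,d'}))$, and let $\mathbf{A}$ be the projection marginal and $\mathbf{b}$ the restricted Lebesgue measure. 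Since here the measure problem \eqref{mugl} is a minimization, I would first negate the objective, setting $\mathbf{c} := -x$ to place the measure side in the supremum orientation of the canonical form (this sign does not affect the argument). One then checks that the cone $\{(\mathbf{Ax}, \langle \mathbf{x},\mathbf{c}\rangle_1) : \mathbf{x} \in E_1^+\}$ is closed in $E_2 \times \mathbb{R}$ by the identical reasoning: if $\mathbf{Ax}_n \to \mathbf{a}$, the marginal identity gives $\Vert\mathbf{x}_n\Vert \to \Vert\mathbf{a}\Vert$, so $(\mathbf{x}_n)$ is bounded and has a weak-star convergent subsequence whose limit lies in $E_1^+$, after which continuity of $\mathbf{A}$ and $\mathbf{c}$ closes the argument. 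The only genuinely new point is the compactness of $\mathcal{Z}_{r,d'}$ and of its projection, and I expect this to be the main (though mild) obstacle, since every remaining step is inherited directly from Lemma~\ref{zero} once that compactness is secured.
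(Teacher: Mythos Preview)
Your proposal is correct and follows precisely the route the paper itself indicates: the paper's own proof consists of the single sentence ``Since $\mathcal{Z}_{r,d'}$ is a compact subset of $\mathcal{Z}$, we can mimic the proof of Lemma~\ref{zero},'' and you have simply written out that mimicry in full. The only detail you add beyond the paper is the explicit sign flip $\mathbf{c}:=-x$ to account for the fact that the measure problem is now an infimum rather than a supremum, which is a correct and necessary adjustment that the paper leaves implicit.
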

Since $\mathcal{Z}_{r,d'}$ is a compact subset of $\mathcal{Z}$, we can mimic the proof of Lemma \ref{zero} in order to get a proof of Lemma \ref{zerogl}.
\begin{rem}
As in Remark \ref{attained}, the supremum in LP \eqref{wvd} is not attained for $\mathscr{C}(\mathcal{Q})$ or $\mathbb{R}[q]$, but it is attained for $\mathbb{R}[q]_d$ with $d$ finite. To adjust the proof of Remark \ref{attained}, set $M := \min_{q \in \mathcal{Q}} a(q) - N$ for an $N \in \mathbb{N}$ sufficiently large, and $R := \int_{\mathcal{Q}} \left(a(q)-M\right)\, dq$ as in Remark \ref{attainedd}.
\end{rem}
Finally, for $d'$ as in Lemma \ref{v_p'} and $d_0 \geq d'$ sufficiently large we can write an SDP hierarchy indexed by $d \in \mathbb{N},\ d \geq d_0$:
\begin{align}
		\rho_{d',d} = & \sup_{w_d,\sigma_0,\sigma_{j},\sigma_{\hat{v}},\tau_\Re,\tau_\Im} \int_{\pi_\mathcal{Q}(\mathcal{Z}_{r,d'})} w_d(q)\, dq\label{wdvd}\\
		& \text{s.t.}\ x - w_d(q) = \sigma_0(q,x,y) + \sum_{j=1}^n \sigma_{j}(q,x,y)(1-q_j^2) + \sigma_{\hat{v}}(q,x,y)(x-\hat{v}_{d'}(q))\notag\\
		&\hspace{2,5cm} + \tau_\Re(q,x,y)p_\Re(q,x,y) + \tau_\Im(q,x,y)p_\Im(q,x,y)\notag
	\end{align}
for all $(q,x,y) \in \mathbb{R}^n \times \mathbb{R}^2$ and with $w_d \in \mathbb{R}[q]_{2d}$, $\sigma_0 \in \Sigma[q,x,y]_{2d},\ \sigma_{j} \in \Sigma[q,x,y]_{2d-2}$ for $j = 1, \dotsc, n$, $\sigma_{\hat{v}} \in \Sigma[q,x,y]_{2d-d'}$ and $\tau_\Re, \tau_\Im \in \mathbb{R}[q,x,y]_{2d-m}$.
\begin{rem}
As in section \ref{las}, SDP \eqref{wdvd} is a strengthening of LP \eqref{wvd}, meaning $\rho_{d',d} \leq \rho_{d'}$. Besides, the archimedean quadratic module corresponding to the set $\mathcal{Z}$ is contained in the quadratic module corresponding to $\mathcal{Z}_{r,d'}$. Hence, the latter is also archimedean, i.e. $\lim_{d\to\infty} \rho_{d',d} = \rho_{d'} = \rho_{d'}^*$.
\end{rem}
\begin{rem}
	For numerical applications one can assume that $\mathcal{A}_\delta$ is empty and substitute $\pi_\mathcal{Q}(\mathcal{Z}_{r,d'})$ by $\mathcal{Q}$.
\end{rem}
The associated sequence converges:
\begin{thm} \label{convergegl}
	Let Assumptions \ref{ass1} and \ref{ass2} hold and let $\mathcal{A}_\delta^C$ and $d'$ be as in Lemma \ref{v_p'}. Let $w_d \in \mathbb{R}[q]_{2d}$ be a near optimal solution for SDP \eqref{wdvd}, i.e. $\int_\mathcal{Q} w_d(q)\, dq \geq \rho_{d,d'} - \tfrac{1}{d}$. Consider the associated sequence $(w_d)_{d\geq d_0} \subset L^1(\mathcal{Q})$. Then $w_d$ is a valid lower bound of $a_p$ on $\mathcal{A}_\delta^C$ and it converges to $a_p$ in $L^1$ norm on $\mathcal{A}_\delta^C$.
\end{thm}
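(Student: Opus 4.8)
The plan is to mirror the proof of Theorem \ref{converge}, replacing $\mathcal{Z}$ by $\mathcal{Z}_{r,d'}$ and the integration domain $\mathcal{Q}$ by $\pi_\mathcal{Q}(\mathcal{Z}_{r,d'}) = \mathcal{A}_\delta^C$; the genuinely new content is that Assumptions \ref{ass1}, \ref{ass2} together with Lemma \ref{v_p'} are exactly what force the tightest feasible approximation to recover $a_p$ rather than some smaller root real part. I first show that every near optimal $w_d$ is a valid lower bound of $a_p$ on $\mathcal{A}_\delta^C$. For a feasible tuple of SDP \eqref{wdvd}, evaluating the certificate on $\mathcal{Z}_{r,d'}$, where $1-q_j^2\geq0$, $x-\hat v_{d'}(q)\geq0$ and $p_\Re=p_\Im=0$, gives $x-w_d(q)\geq0$ for every $(q,x,y)\in\mathcal{Z}_{r,d'}$. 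Since $\mathcal{A}_\delta^C=\pi_\mathcal{Q}(\mathcal{Z}_{r,d'})$, each $q\in\mathcal{A}_\delta^C$ carries some such point with $x\leq a_p(q)$, whence $w_d(q)\leq a_p(q)$.

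Next I would identify the common LP value. Writing $\underline x_{d'}(q):=\min\{x:(q,x,y)\in\mathcal{Z}_{r,d'}\}$, the tightest feasible $w$ in \eqref{wvd} equals $\underline x_{d'}$, so $\rho_{d'}=\int_{\mathcal{A}_\delta^C}\underline x_{d'}(q)\,dq=\rho_{d'}^*$ by Lemma \ref{zerogl}. The decisive claim is that $\underline x_{d'}=a_p$ on $\mathcal{A}_\delta^C$. Here Assumption \ref{ass1} enters: any root real part $x\geq\hat v_{d'}(q)\geq a_{p'}(q)$ must equal $a_{p'}(q)$ or $a_p(q)$; where $\hat v_{d'}(q)>a_{p'}(q)$ the value $a_{p'}(q)$ is excluded, so $\underline x_{d'}(q)=a_p(q)$, while on $\mathcal{D}$ Assumption \ref{ass2} gives $\hat v_{d'}(q)=a_{p'}(q)=a_p(q)$, again forcing $\underline x_{d'}(q)=a_p(q)$. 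Moreover Lemma \ref{v_p'} guarantees $a_p(q)\geq\hat v_{d'}(q)$ on $\mathcal{A}_\delta^C$, so the abscissa point $(q,a_p(q),y_q)$ lies in $\mathcal{Z}_{r,d'}$ and $\underline x_{d'}\leq a_p$ throughout. Thus $\underline x_{d'}=a_p$ away from the coincidence set treated below, whence $\rho_{d'}=\int_{\mathcal{A}_\delta^C}a_p(q)\,dq$.

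Finally, with $d'$ (hence $\mathcal{Z}_{r,d'}$ and $\mathcal{A}_\delta^C$) fixed, I would let $d\to\infty$ exactly as in Theorem \ref{converge}. Since the quadratic module associated with $\mathcal{Z}_{r,d'}$ is archimedean, the Stone-Weierstra{\ss} theorem produces a continuous $h_\varepsilon$ with $\sup_{\mathcal{Q}}|h_\varepsilon-a_p|<\varepsilon/2$, so $w_\varepsilon:=h_\varepsilon-\varepsilon$ satisfies $x-w_\varepsilon(q)>0$ on $\mathcal{Z}_{r,d'}$ (using $\underline x_{d'}=a_p$), and Putinar's Positivstellensatz yields a certificate making $w_\varepsilon$ feasible for SDP \eqref{wdvd} once $d$ is large. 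This gives $\rho_{d',d}\to\rho_{d'}=\int_{\mathcal{A}_\delta^C}a_p$. Combining with the near optimality $\int_\mathcal{Q} w_d\geq\rho_{d',d}-\tfrac1d$ and the valid lower bound $w_d\leq a_p$, monotonicity of the integral yields
\begin{equation*}
\|w_d-a_p\|_{L^1(\mathcal{A}_\delta^C)}=\int_{\mathcal{A}_\delta^C}(a_p(q)-w_d(q))\,dq=\int_{\mathcal{A}_\delta^C}a_p\,dq-\int_{\mathcal{A}_\delta^C}w_d\,dq\longrightarrow0.
\end{equation*}

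The step I expect to be the main obstacle is establishing $\underline x_{d'}=a_p$ on $\mathcal{A}_\delta^C$ genuinely everywhere, not merely almost everywhere. The clean deduction holds where $\hat v_{d'}(q)>a_{p'}(q)$ or $q\in\mathcal{D}$, but on the touching set $\{q\in\mathcal{A}_\delta^C\setminus\mathcal{D}:\hat v_{d'}(q)=a_{p'}(q)\}$ a root of $p$ with real part exactly $a_{p'}(q)<a_p(q)$ would lower $\underline x_{d'}(q)$ below $a_p(q)$. This matters twice over: it would make $\rho_{d'}<\int_{\mathcal{A}_\delta^C}a_p$, and, more seriously, since Putinar's certificate demands strict positivity of $x-w_\varepsilon$ at \emph{every} point of $\mathcal{Z}_{r,d'}$, a nonempty touching set obstructs building feasible near optimal $w_\varepsilon$ from $a_p$ (note that $\underline x_{d'}$ itself may be discontinuous, so one cannot simply apply Stone-Weierstra{\ss} to it). One therefore wants the touching set empty, arguing that the nonnegative continuous semi-algebraic function $\hat v_{d'}-a_{p'}$ vanishes only on $\mathcal{D}$; Assumption \ref{ass1} is precisely what rules out intermediate roots, so the entire delicacy is concentrated in this coincidence set.
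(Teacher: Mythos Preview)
Your approach is exactly the paper's: the paper omits the proof entirely, stating only that it mirrors Theorem~\ref{converge}, that Lemma~\ref{v_p'} yields both the valid lower bound and $\pi_\mathcal{Q}(\mathcal{Z}_{r,d'})=\mathcal{A}_\delta^C$, and that the first part shortens because ``for every $(q,x,y)\in\mathcal{Z}_{r,d'}$ it holds that $x=a(q)$.'' You have supplied the details the paper leaves implicit and have correctly located the only nontrivial step in this last assertion.

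The touching-set concern you raise---that $\hat v_{d'}(q)=a_{p'}(q)$ at some $q\notin\mathcal{D}$ together with a root of $p$ of real part exactly $a_{p'}(q)$ would put a point with $x<a_p(q)$ into $\mathcal{Z}_{r,d'}$ and obstruct the Putinar argument---is a genuine subtlety that the paper does not address either; it simply asserts $x=a(q)$ on $\mathcal{Z}_{r,d'}$ (and, earlier, that the idealised $\mathcal{Z}_r$ contains ``exactly'' the abscissa points) without discussing boundary cases allowed by the open interval in Assumption~\ref{ass1}. So your argument is at least as complete as the paper's own, and the obstacle you flag is a residual looseness in the theorem's hypotheses rather than a defect of your sketch.
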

The proof of this result is very similar to the proof of Theorem \ref{converge}, so we omit it. Note that by Lemma \ref{v_p'} every feasible solution to SDP \eqref{wdvd} is a valid lower bound of $a_p$ on $\mathcal{A}_\delta^C$ and that we have $\pi_\mathcal{Q}(\mathcal{Z}_{r,d'}) = \mathcal{A}_\delta^C$ due to our choice of $d'$. As for the proof of Theorem \ref{converged}, the first part can be shortened, since for every $(q,x,y) \in \mathcal{Z}_{r,d'}$ it holds that $x = a(q)$.

\subsubsection{Examples}

\begin{figure}[h]
\centerline{\includegraphics[width=0.5\textwidth]{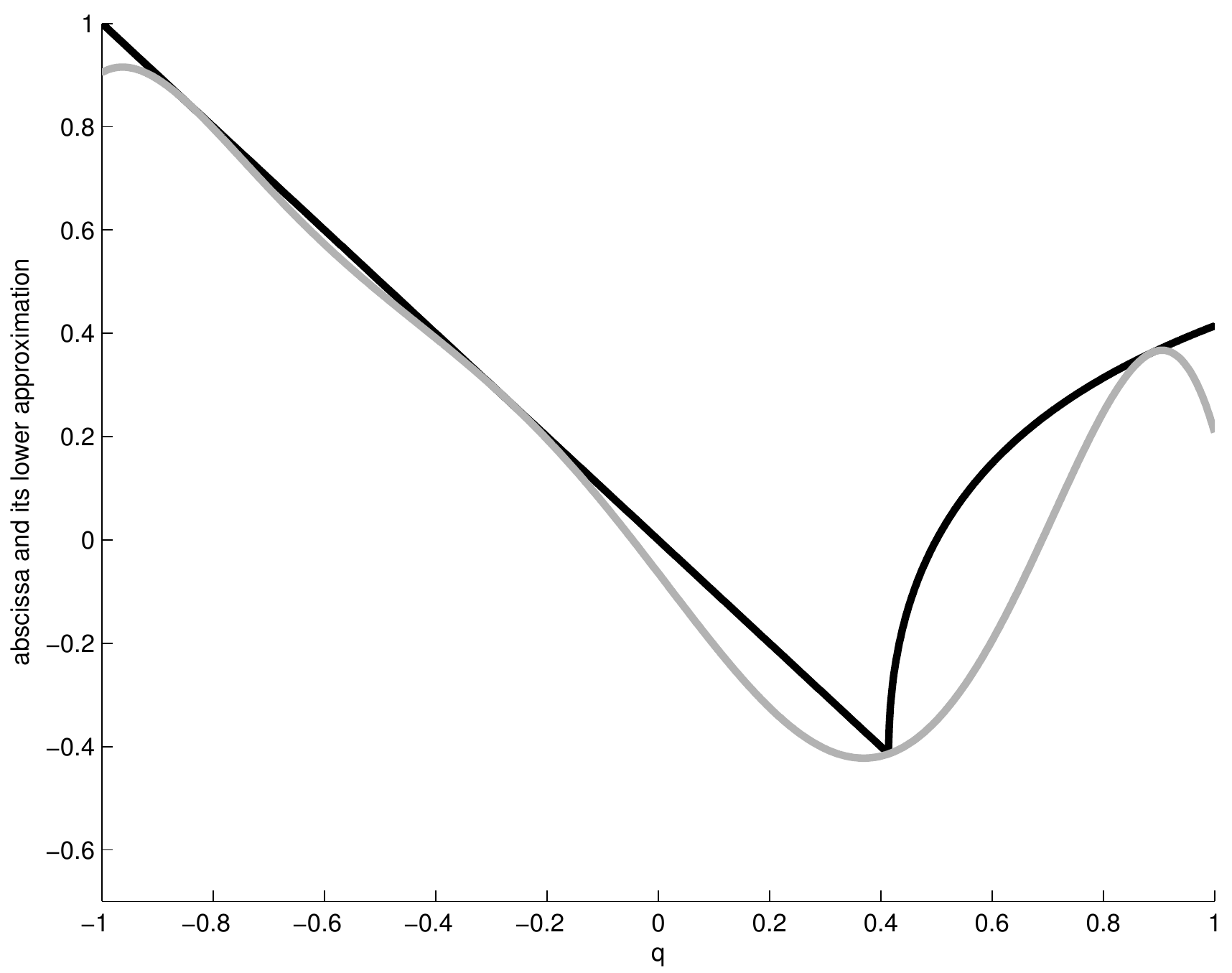}\includegraphics[width=0.5\textwidth]{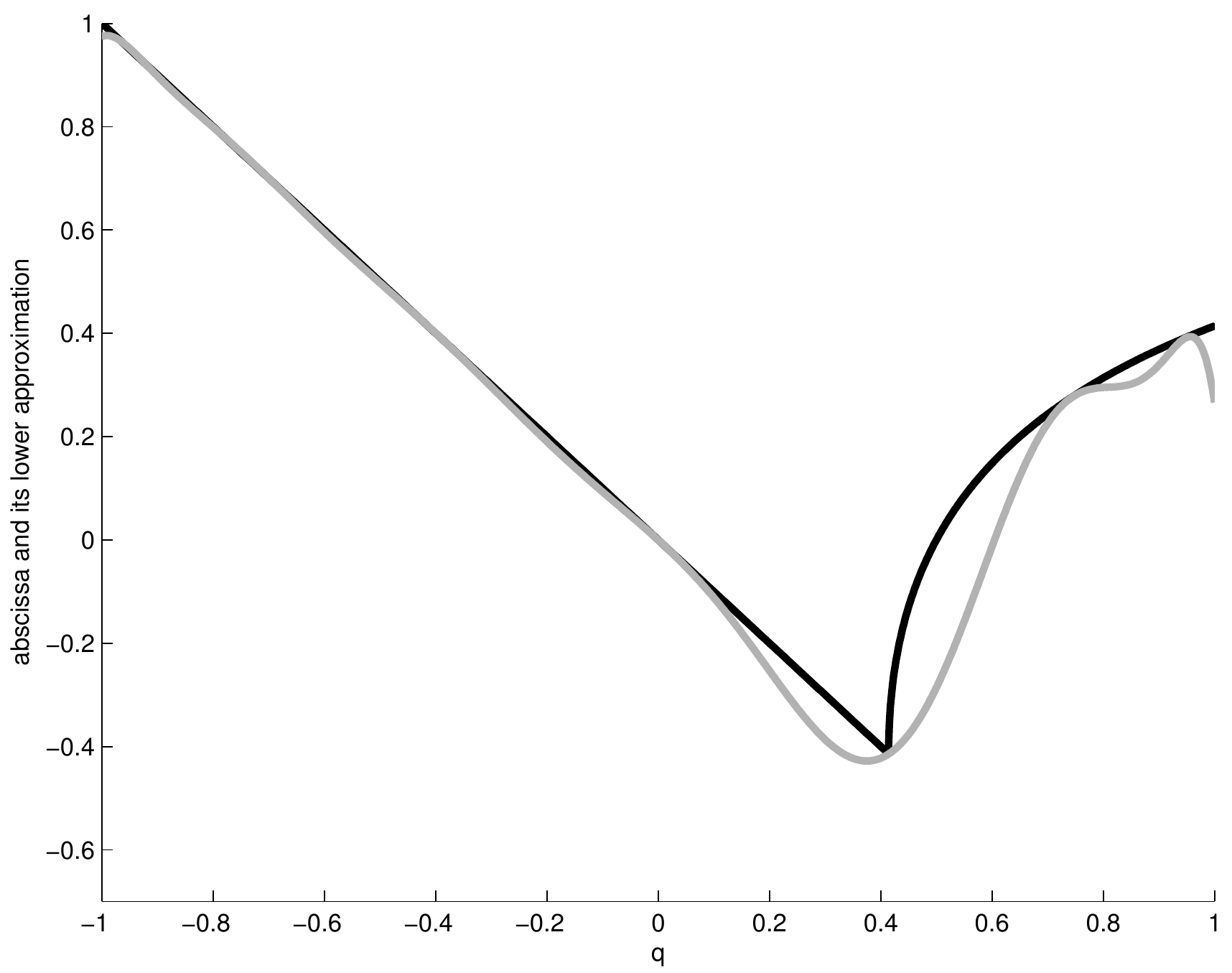}}
\caption{Abscissa (black) and its polynomial lower approximations of degree 6 (gray, left) and 12 (gray, right)
for Example \ref{expl1gl}. The quality of the approximation deteriorates near the minimum, where the
abscissa is not Lipschitz, compare with Figures \ref{fig-expl1} and \ref{fig-expl1d} .}\label{fig-expl1gl}
\end{figure}

\begin{expl}\label{expl1gl}
As in Examples \ref{expl1} and \ref{expl1d} consider
\begin{equation*}
	p:\ s \mapsto p(q,s) = s^2 + 2qs + 1-2q.
\end{equation*}
Assumption \ref{ass1} is naturally fulfilled, since $p$ is of degree 2. In the same way, Assumption \ref{ass2} is fulfilled, since $a_{p'}(q) = -q$ is polynomial. We have $\mathcal{Z}_{r,d'} = \{(q,x,y) \in [-1,1] \times \mathbb{R}^2 : x - \hat{v}_{d'}(q) \geq 0,\ x^2-y^2+2qx+1-2q = 2xy+2qy = 0\}$ and the corresponding SDP \eqref{vd} reads
	\begin{align*}
		\rho_{d',d} = & \sup_{w_d,\sigma_0,\sigma_1,\sigma_{\hat{v}},\tau_\Re,\tau_\Im} \int_{-1}^1 w_d(q)\, dq\\
		& \text{s.t.}\ x - w_d(q) = \sigma_0(q,x,y) + \sigma_1(q,x,y)(1-q^2) + \sigma_{\hat{v}}(q,x,y)(x-\hat{v}_{d'}(q))\\
		&\hspace{2,5cm} + \tau_\Re(q,x,y)(x^2-y^2+2qx+1-2q) + \tau_\Im(q,x,y)(2xy+2qy)
	\end{align*}
for all $(q,x,y) \in \mathbb{R}^3$ and with $w_d \in \mathbb{R}[q]_{2d}$, $\sigma_0 \in \Sigma[q,x,y]_{2d},\ \sigma_1 \in \Sigma[q,x,y]_{2d-2}$, $\sigma_{\hat{v}} \in \Sigma[q,x,y]_{2d-d'}$ and $\tau_\Re, \tau_\Im \in \mathbb{R}[q,x,y]_{2d-2}$.
	Due to the simplicity of $a_{p'}$ it suffices to choose $d'=2$. We see the degree 6 and 12 polynomial lower approximations in Figure \ref{fig-expl1gl}. They are both computed in less than 2 seconds.
\end{expl}

\begin{figure}[h]
\centerline{\includegraphics[width=0.5\textwidth]{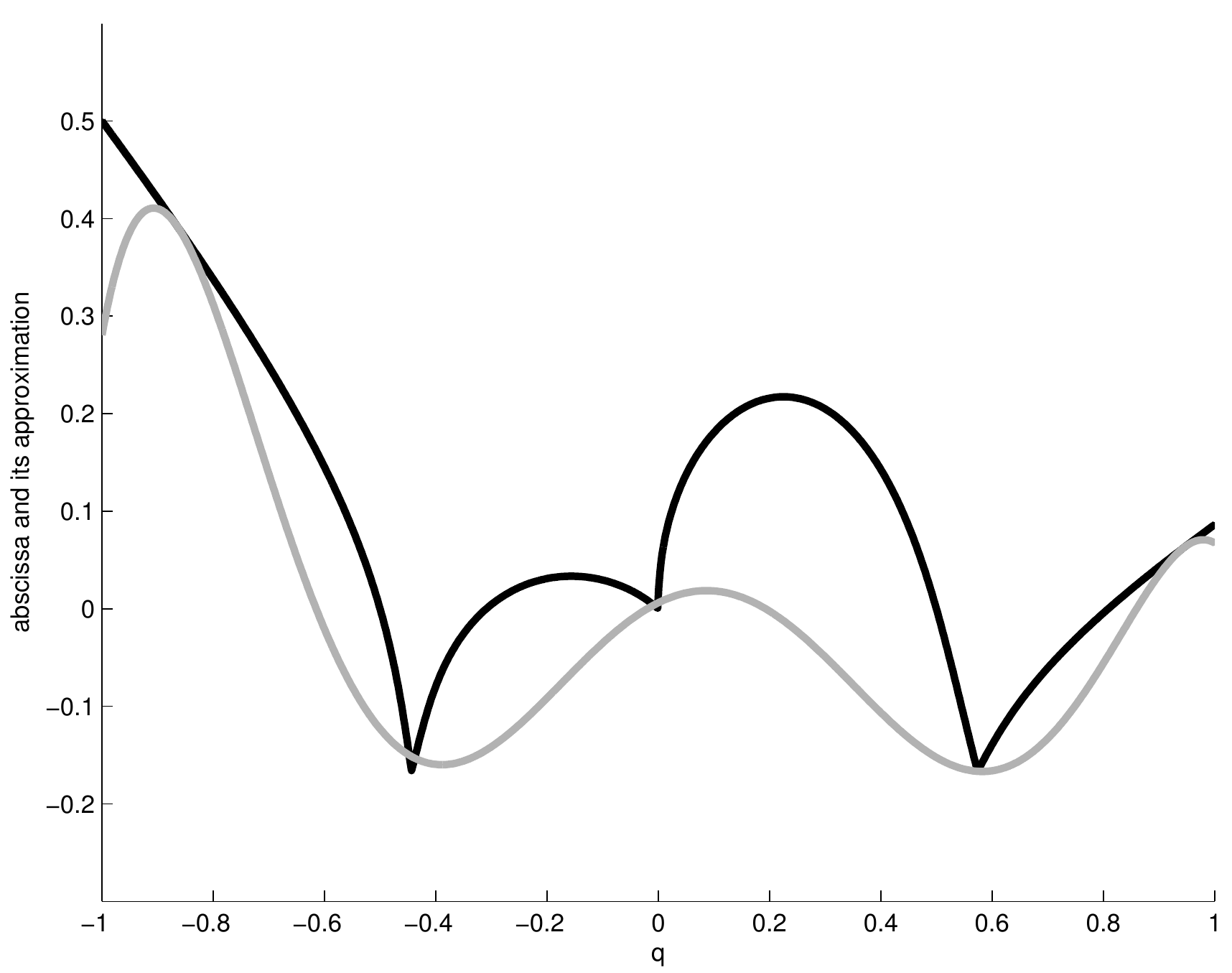}\includegraphics[width=0.5\textwidth]{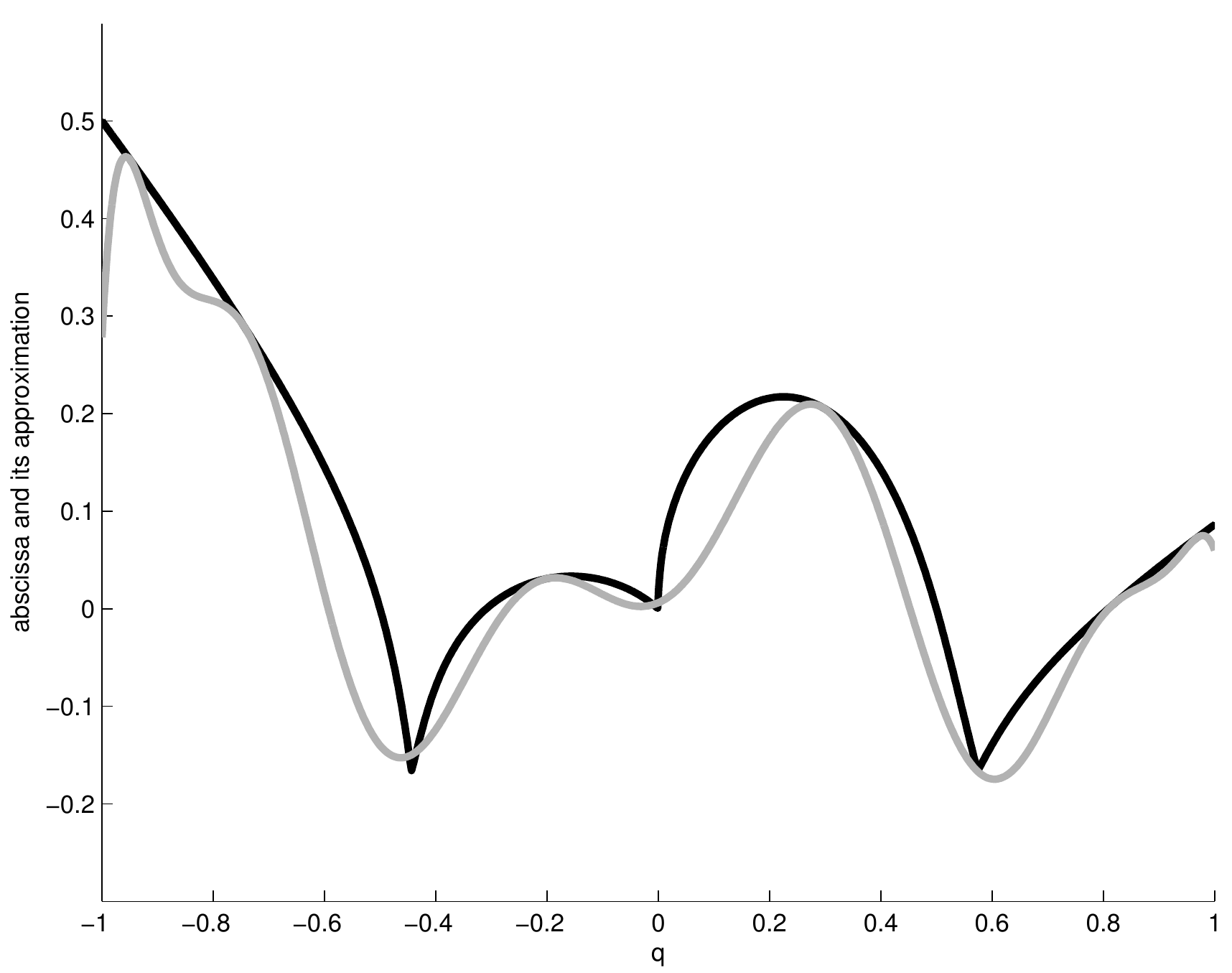}}
\caption{Abscissa (black) and its polynomial lower approximations of degree 6 (gray, left) and 12 (gray, right)
for Example \ref{expl2gl}. We observe that the approximations are not valid near $q=-0.5$ and $q=0$, as Assumption \ref{ass2} is violated.}\label{fig-expl2gl}
\end{figure}

\begin{expl}\label{expl2gl}
	As in Examples \ref{expl2} and \ref{expl2d} consider
	\begin{equation*}
	p:\ s \mapsto p(q,s) = s^3 + \tfrac{1}{2}s^2 + q^2s + (q-\tfrac{1}{2})q(q+\tfrac{1}{2}).
\end{equation*}
The abscissa $a_{p'}$ of $p'$ is not differentiable in two points, hence it is not a polynomial and it cannot be described perfectly by $\hat{v}_{d'}$ for finite $d'$. Let us choose $d'=8$ and $d=6$ resp. $d=12$. We observe in Figure \ref{fig-expl2gl} that $w_6$ resp. $w_{12}$ is not everywhere a valid lower bound. Indeed, the set $\mathcal{D} = \{q \in \mathcal{Q} \mid a_p(q) = a_{p'}(q)\}$ contains three points and for two of these (near $q=-0.5$ and $q=0$), the approximation $\hat{v}_8$ is not tight enough
to ensure $\pi_{\mathcal{Q}}(\mathcal{Z}_{r,8}) = \mathcal{Q}$. Consequently, Assumption \ref{ass2} is violated.
\end{expl}

\begin{figure}[h]
\centerline{\includegraphics[width=0.5\textwidth]{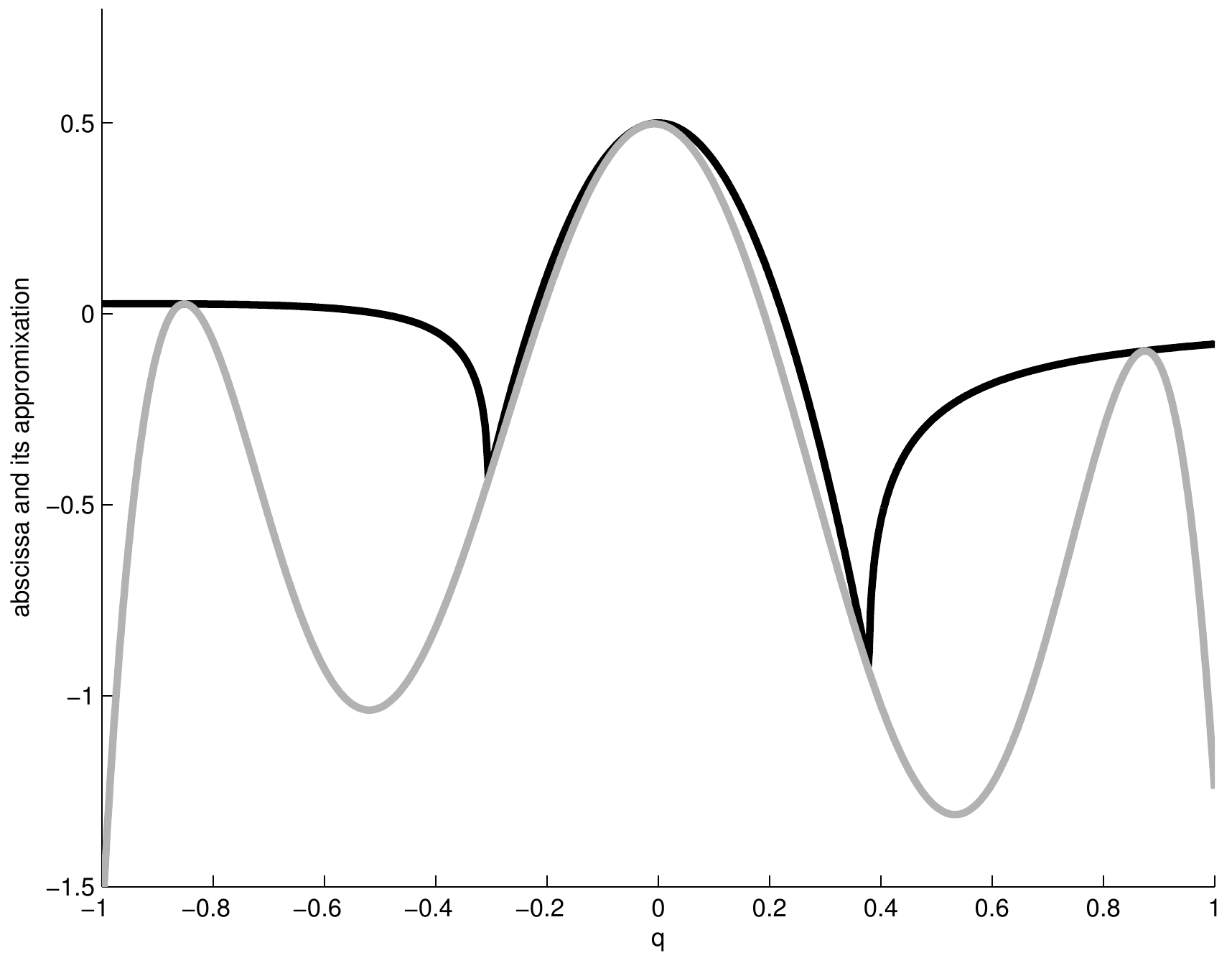}\includegraphics[width=0.5\textwidth]{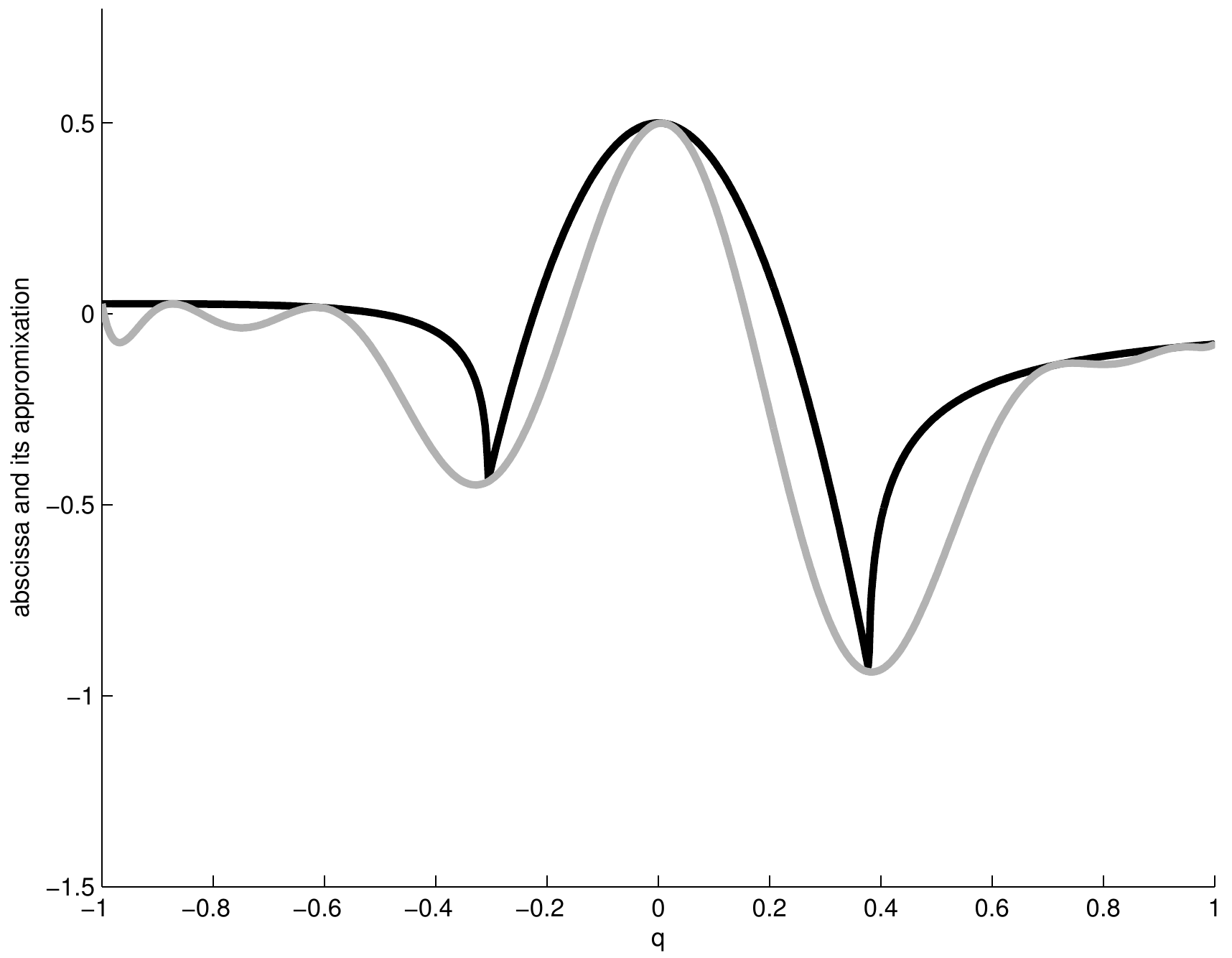}}
\caption{Abscissa (black) and its polynomial lower approximations of degree 6 (gray, left) and 12 (gray, right)
for Example \ref{explgl1}.}\label{fig-explgl1}
\end{figure}

\begin{expl}\label{explgl1}
	In order to discuss another example for which $\mathcal{D}$ is a non-empty interval, consider the polynomial
	\begin{equation*}
		p:\ s \mapsto p(q,s) = s^2 + (20q^2-1)s + q+\tfrac{1}{2}.
	\end{equation*}
	Here $a_{p'}(q) = -10q^2 + \tfrac{1}{2}$ is a quadratic polynomial. Thus, Assumption \ref{ass2} is fulfilled, in particular $\hat{v}_2 = a_{p'}$, and the lower approximations are valid, see Figure \ref{fig-explgl1}.
%
\end{expl}
%
%
%
%
%

\begin{figure}[h]
\centerline{\includegraphics[width=0.5\textwidth,height=0.3\textheight]{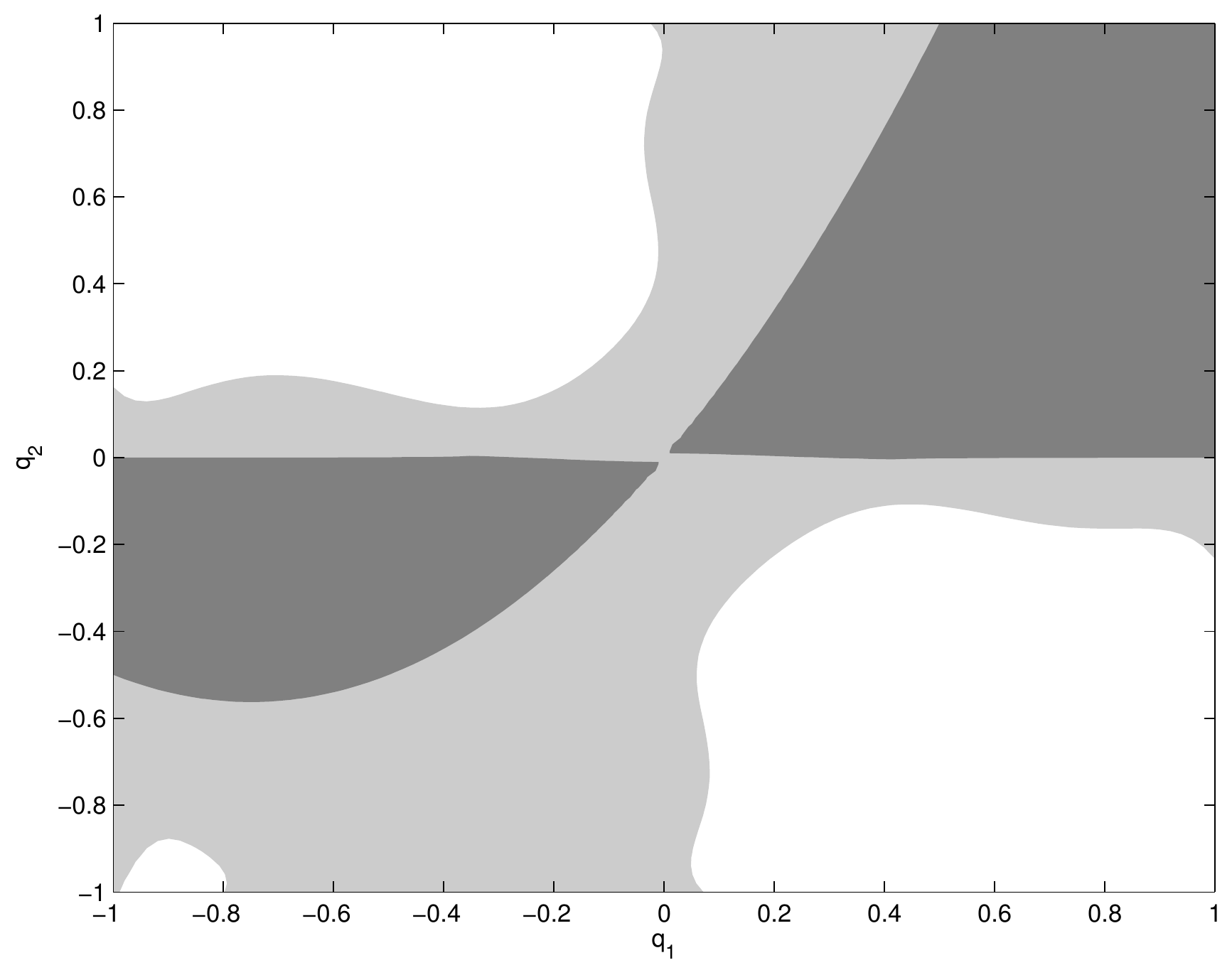}
\includegraphics[width=0.5\textwidth,height=0.3\textheight]{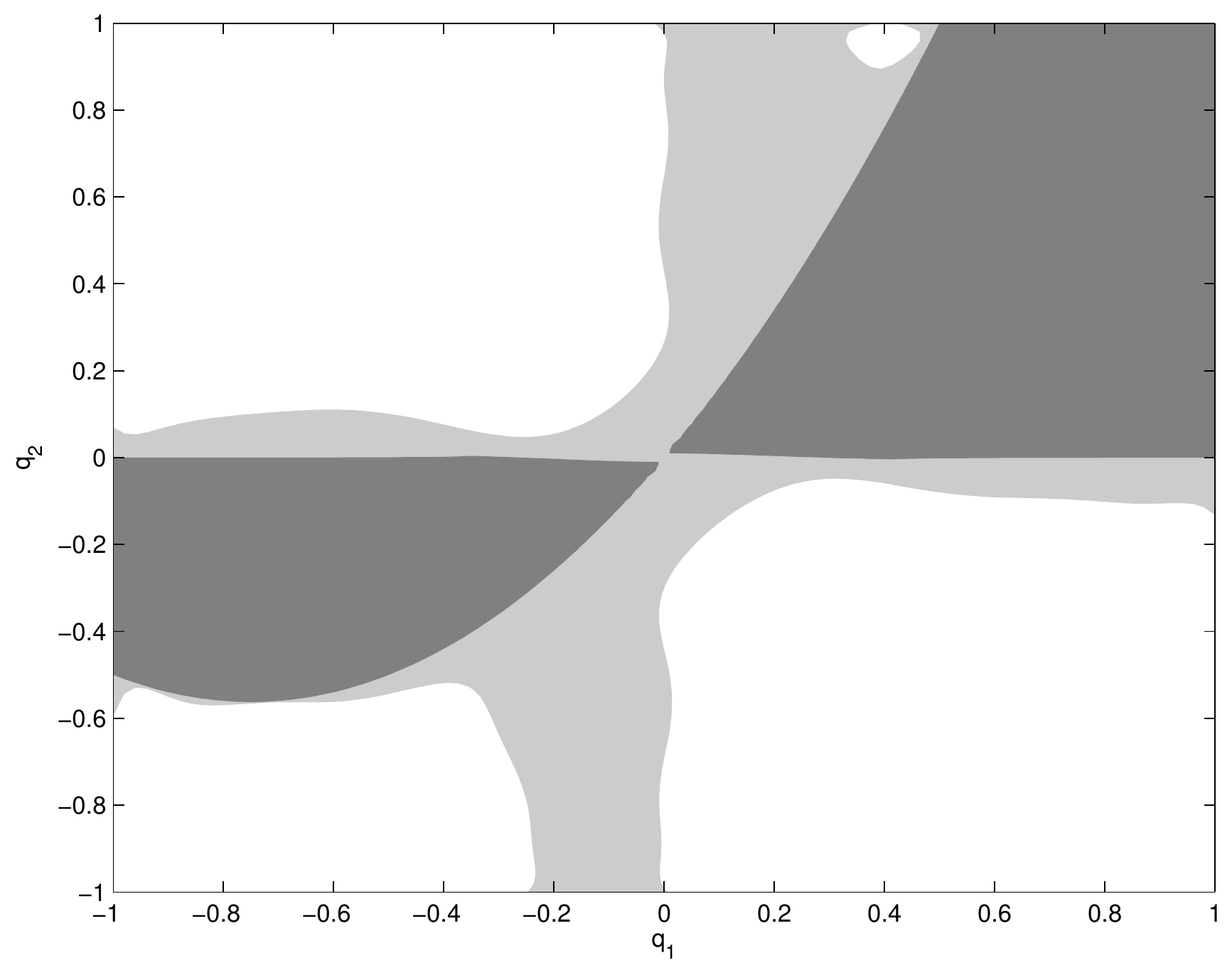}}
\caption{Stabilizability region (dark gray region) and its degree 6 outer approximation (light gray region, left) and degree 12 outer approximation (light gray region, right). Compare with Figure \ref{fig-expl3d}.}\label{fig-expl3gl}
\end{figure}

\begin{expl}\label{expl3gl}
As in Examples \ref{expl3} and \ref{expl3d} consider the polynomial
	\begin{equation*}
		p:\ s \mapsto p(q,s) = s^3 + (q_1+\tfrac{3}{2}) s^2 + q_1^2s + q_1q_2.
	\end{equation*}	
	We have $\mathcal{Z}_{r,d'} := \{(q,x,y) \in \mathcal{Z} : x - \hat{v}_{d'}(q) \geq 0\}$ with $\mathcal{Z}$ given in Example \ref{expl3}.  In Figure \ref{fig-expl3gl} we see the outer approximations of degree $d=6$ resp. $d=12$ of the stabilizability region
obtained for the choice $d'=8$. A careful examination reveals that Assumption \ref{ass2} is slightly violated here, yet this has no
effect on the validity of the zero sublevel set approximation.
Computing the degree 12 approximation takes a few minutes.
\end{expl}

\section{Conclusion}

In this paper we continued our long haul research programme consisting of developing and applying
semidefinite programming hierarchies for approximating potentially complicated objects (arising in optimization and control)
with simple objects, namely polynomials of given degrees. The complicated object of interest here
was the polynomial abscissa, which has low regularity, while being ubiquitous in linear systems control.

In section \ref{upper} we described how to construct polynomial upper approximations to the abscissa with
guarantees of $L^1$ convergence (or equivalently almost uniform convergence) on compact sets.
Constructing polynomial lower approximations with similar
convergence guarantees has proved to be much more challenging. We proposed a first approach in Section \ref{esf}
using elementary symmetric functions which is quite general but also computationally challenging due
to the introduction of many lifting variables. This motivated the study of a second approach in Section \ref{lowergl}
using the Gau{\ss}-Lucas theorem which is less computationally demanding, but unfortunately much more involved
and subject to working assumptions.

An interesting question that would deserve careful investigation is whether our $L^1$ convergence
guarantees can be strengthened to $L^{\infty}$,  i.e. to uniform convergence, since we know that
the polynomial abscissa is continuous, and hence that it can be uniformly approximated by polynomials
on compact sets. For this the semidefinite programming hierarchy
should be modified accordingly.


\begin{thebibliography}{xx}
\bibitem{ash}
R. B. Ash. Probability and measure theory. 2nd edition. Academic Press, San Diego, USA, 2000.
\bibitem{barvinok}
A. Barvinok. A course in convexity. American Mathematical Society, Providence, USA, 2002.
\bibitem{bochnak}
J. Bochnak, M. Coste,  M.-F. Roy. Real algebraic geometry. Springer, Berlin, 1998.
\bibitem{bhlo}
J. V. Burke, D. Henrion, A. S. Lewis, M. L. Overton. Stabilization via nonsmooth, nonconvex optimization.
IEEE Transactions on Automatic Control 51(11):1760-1769, 2006.
\bibitem{gausslucas_pisa}
J. V. Burke, A. S. Lewis, M. L. Overton. Variational analysis of the abscissa mapping for polynomials via the Gauss-Lucas theorem. Journal of Global Optimization 28:259-268, 2004.
\bibitem{cross}
J. A. Cross. Spectral abscissa optimization using polynomial stability conditions. PhD thesis, University of Washington, Seattle, 2010.
\bibitem{innerpmi}
D. Henrion, J. B. Lasserre. Inner approximations for polynomial matrix inequalities and robust stability regions.
IEEE Transactions on Automatic Control 57(6):1456-1467, 2012.
\bibitem{stability}
D. Henrion, D. Peaucelle, D. Arzelier, M. \v{S}ebek. Ellipsoidal approximation of the stability domain of a polynomial.
IEEE Transactions on Automatic Control 48(12):2255-2259, 2003.
\bibitem{lasserre}
J. B. Lasserre. Moments, positive polynomials and their applications. Imperial College Press, London, UK, 2010.
\bibitem{laurent}
M. Laurent. Sums of squares, moment matrices and polynomial optimization.
In M. Putinar, S. Sullivan (eds.). Emerging applications of algebraic geometry, Vol. 149 of IMA Volumes in Mathematics and its Applications, Springer, Berlin, 2009.
\bibitem{zorich}
V. A. Zorich. Mathematical analysis II. Springer, Berlin, 2004.
\end{thebibliography}
\end{document}